\documentclass{article}

\usepackage{graphicx}
\usepackage{amsmath}
\usepackage{amsthm}
\usepackage{amssymb}
\usepackage{amsbsy}
\usepackage{amsfonts}
\usepackage{amstext}
\usepackage{amscd}
\usepackage[dvips]{epsfig}
\allowdisplaybreaks

\ifx\undefined\bysame
\newcommand{\bysame}{\leavevmode\hbox to3em{\hrulefill}\,}
\fi

\textwidth16cm
\textheight22cm
\topmargin-0.5cm
\setlength{\oddsidemargin}{0cm}
\setlength{\evensidemargin}{0cm}
\setlength{\unitlength}{1pt}

\theoremstyle{plain}
    \newtheorem{thm}{Theorem}[section]
    \newtheorem{lem}[thm]{Lemma}
    \newtheorem{prop}[thm]{Proposition}
    \newtheorem{cor}[thm]{Corollary}
 \theoremstyle{definition}

    \newtheorem{rem}[thm]{Remark}
    \newtheorem{prob}[thm]{Problem}

\newcommand{\df}{\displaystyle\frac}

\title{\bf{On the $n$-loop Kontsevich invariant of knots having the same Alexander polynomial}}
\author{Kouki YAMAGUCHI}
\date{}

\begin{document}\large

\maketitle

\begin{abstract}
The $n$-loop Kontsevich invariant of knots takes its value in the completion of the space of $n$-loop open Jacobi diagrams, which is an infinite dimensional vector space. Since the 1-loop part is presented by the Alexander polynomial, we are interested in the image of the $\geq 2$-loop Kontsevich invariant of knots having the same Alexander polynomial. In this paper, we show that for $n\geq 2$ the subspace generated by the image of the $n$-loop Kontsevich invariant of genus $\leq g$ knots having the same Alexander polynomial is finite dimensional. Further, we give some concrete calculations about those subspaces and dimensions in several simple cases. 
\end{abstract}

\section{Introduction}
The Kontsevich invariant of knots is a powerful invariant, which dominates all quantum invariants derived from simple Lie algebras and all Vassiliev invariants of knots. The Kontsevich invariant takes its value in the completion of the space of Jacobi diagrams, which is an infinite dimensional vector space. Here, Jacobi diagrams are some kind of uni-trivalent graphs. Thus, the value of the Kontsevich invariant is presented as an infinite sum of Jacobi diagrams, and it is so hard to determine all terms of it. The $n$-loop part of the Kontsevich invariant $Z^{(n)}$, which we call the $n$-loop Kontsevich invariant, takes its value in the completion of the space of $n$-loop open Jacobi diagrams, which is also an infinite dimensional vector space. Here, for an open Jacobi diagram, we call it a $n$-loop diagram if its first Betti number is $n$. Since it is well-known that the 1-loop Kontsevich invariant is equivalent to the Alexander polynomial, we are interested in the image of the $\geq 2$-loop Kontsevich invariant of knots having the same Alexander polynomial. However, even though we fix a loop number $n\geq 2$, the value of the $n$-loop Kontsevich invariant is presented as an infinite sum of $n$-loop Jacobi diagrams, and it is not so easy to determine the value of it. Let $\Delta(t)\in\mathbb{Z}[t^{\pm 1}]$ be a polynomial which satisfies $\Delta(1)=1, \Delta(t)=\Delta(t^{-1})$, and let $\mathcal{K}^{\Delta(t)}$ be a set of all knots whose values of Alexander polynomial are $\Delta(t)$. Then, we can show that the subspace $\mathcal{V}(n,\Delta(t))$, which is the subspace generated by $Z^{(n)}(\mathcal{K}^{\Delta(t)})$, is infinite dimensional (see Appendix). This means that $Z^{(n)}(\mathcal{K}^{\Delta(t)})$ have the information of infinitely many Vassiliev invariants of those knots.\par
Then, we consider the filtration by the genus of knots. Let $\mathcal{K}_{\leq g}^{\Delta(t)}$ be a set of all genus $\leq g$ knots whose values of Alexander polynomial are $\Delta(t)$, noting that this also consists of infinitely many knots unless it is $\emptyset$ (see Appendix). Let $\mathcal{V}(n,g,\Delta(t))$ the subspace generated by $Z^{(n)}(\mathcal{K}_{\leq g}^{\Delta(t)})$. Then, we have the filtration of those subspaces, $\mathcal{V}(n,1,\Delta(t))\subset \mathcal{V}(n,2,\Delta(t))\subset\cdots\subset \mathcal{V}(n,\infty,\Delta(t)):=\displaystyle\bigcup_{g\geq 1}\mathcal{V}(n,g,\Delta(t))=\mathcal{V}(n,\Delta(t))$. In this paper, we show that for $n\geq 2$ the subspace $\mathcal{V}(n,g,\Delta(t))$ is finite dimensional. Let $d(n,g,\Delta(t))$ be its dimension. This finiteness means that $Z^{(n)}(\mathcal{K}_{\leq g}^{\Delta(t)})$ have the information of only $d(n,g,\Delta(t))$ numbers of Vassiliev invariants of $\mathcal{K}_{\leq g}^{\Delta(t)}$ and  we can present those values of $n$-loop Kontsevich invariant by using those Vassiliev invariants. Thus, roughly speaking, $d(n,g,\Delta(t))$ represents ``how many Vassiliev invariants determine the image of the $n$-loop Kontsevich invariant of $\mathcal{K}_{\leq g}^{\Delta(t)}$''. In \cite{Oh4}, it is shown that the $t_1$-degree of the 2-loop polynomial, which is the polynomial presenting $Z^{(2)}$, is bounded by the twice of the genus of knots. Though there is no single polynomial which can present $Z^{(n)}$ for large $n$, the fact that $\mathcal{V}(n,g,\Delta(t))$ is finite dimensional can be regarded as one of its generalization.\par
For its proof, we use the method of the computation of the loop expansion of the Kontsevich invariant of knots \cite{Ga1, Kri1}, which states that the $n$-loop Kontsevich invariant can be presented by finitely many polynomials. In order to compute the loop expansion, we use the specific method related to the calculation of LMO invariant, called the rational version of the Aarhus integral  \cite{BGRT1,BGRT2,BGRT3,Ga1,Kri1}. \par
Further, we give some concrete calculations about those dimensions in several simple cases. We determine $\mathcal{V}(2,1,\Delta(t))$ and $d(2,1,\Delta(t))$ for any $\Delta(t)$ completely, and we give another formula for the 2-loop Kontsevich invariant of genus 1 knots. Also, we give some upper bounds for $d(2,g,\Delta(t))$ and $d(3,1,1)$. Here is a table of these results.
\begin{align*}
\begin{tabular}{lrr}\hline
$\mathcal{V}(2,1,\Delta(t))$&$\mathcal{V}(2,g,\Delta(t))$&$\mathcal{V}(3,1,1)$\\ \hline
$d(2,1,\Delta(t))=2$\quad(unless it is 0)&$d(2,g,\Delta(t))\leq g^2+2g$&$d(3,1,1)\leq 11$\\ 
We have a basis&We have a generating set&We have a generating set\\ \hline
\end{tabular}
\end{align*}\par
This paper is organized as follows. In Section \ref{sec2}, we review the Kontsevich invariant of knots. In Section \ref{sec3}, we state the main results. In Section \ref{sec4}, we briefly review the rational version of the Aarhus integral and clasper surgeries. In Section \ref{sec5}, we prove the main results. In Appendix, we prove some nontrivial facts stated in Section \ref{sec3}.\par
This work was supported by JSPS KAKENHI Grant Number JP24KJ1326. The author would like to thank Tomotada Ohtsuki and Katsumi Ishikawa for helpful discussions and comments.

\section{Review of the Kontsevich invariant of knots}
\label{sec2}
In this section, we review the Kontsevich invariant of knots. In this paper, we assume that all knots are oriented and 0-framed unless otherwise stated, and the values of Alexander polynomial of knots belong to $\mathcal{Z}$, where $\mathcal{Z}$ is defined as follows,
\begin{align*}
\mathcal{Z}=\{f(t)\in\mathbb{Z}[t^{\pm 1}]\mid f(1)=1, f(t)=f(t^{-1})\}.
\end{align*}
Any element $f(t)\in\mathcal{Z}$ can be written as $f(t)=1+\sum_{j=1}^{m}a_j(t+t^{-1}-2)^j$ for $a_j\in\mathbb{Z}$, $a_m\neq 0$, and we denote $\text{deg}f:=m$.\par
For an oriented compact 1-manifold $X$, a {\it Jacobi diagram} on $X$ is an uni-trivalent graph together with $X$ such that univalent vertices are distinct points on $X$ and each trivalent vertex is vertex-oriented. Here, we say a trivalent vertex is ``{\it vertex-oriented }'' when a cyclic order of the three edges around the trivalent vertex is fixed. In figures, we draw $X$ by thick lines, the uni-trivalent graphs by thin lines, and, each trivalent vertex is vertex-oriented by counterclockwise order. We sometimes call $X$ a {\it skeleton} of a Jacobi diagram. We define $\mathcal{A}(X)$ to be the quotient $\mathbb{Q}$-vector space spanned by Jacobi diagrams on $X$ subject to the following relations, called the AS, IHX, and STU relations,
\begin{align*}
\mbox{$\begin{array}{c}
   \includegraphics[scale=0.3]{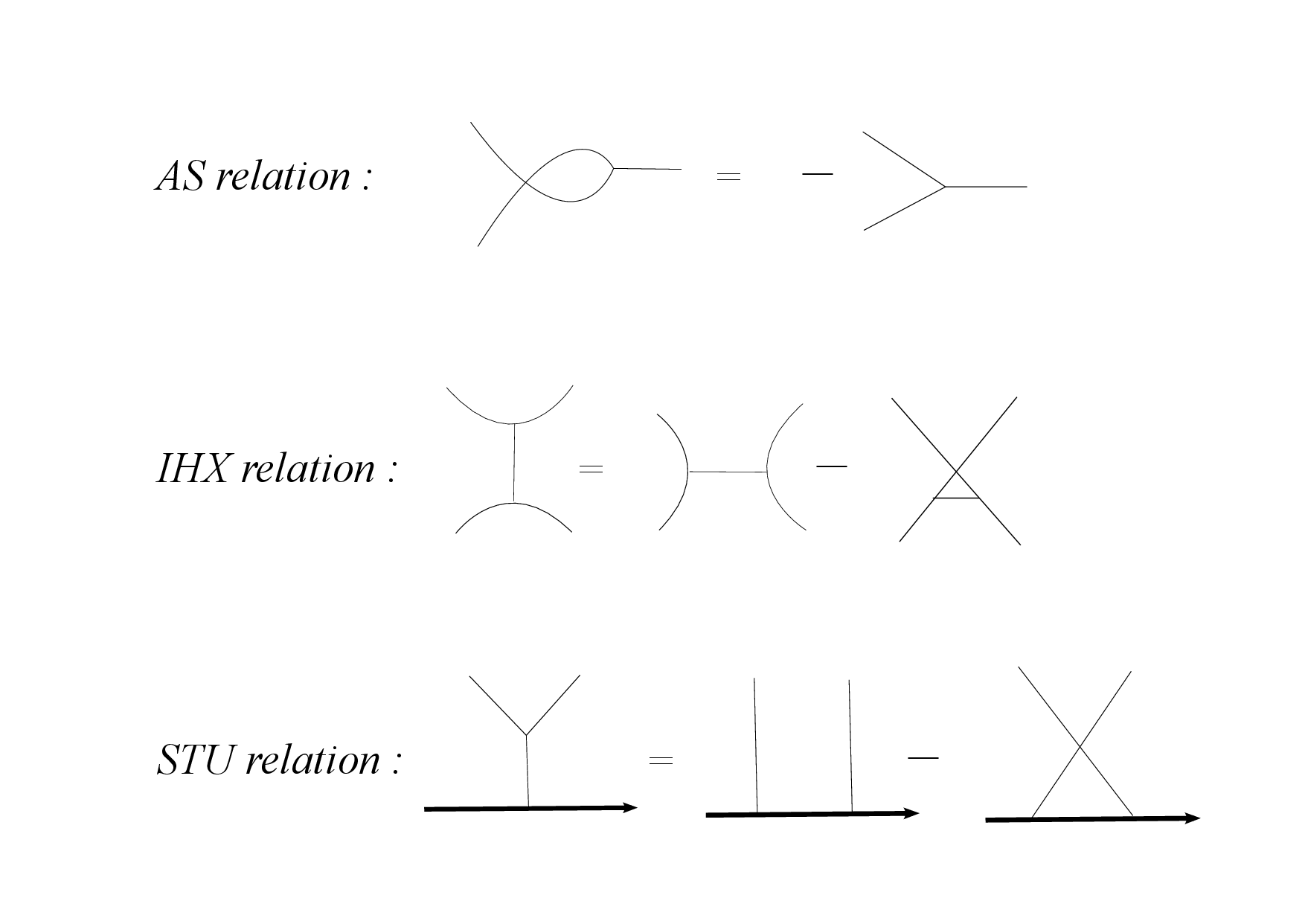}
   \end{array}$}.
\end{align*}
An {\it open Jacobi diagram} is a vertex-oriented uni-trivalent graph. We define $\mathcal{B}$ to be the quotient $\mathbb{Q}$-vector space spanned by open Jacobi diagrams subject to the AS, IHX relations. We sometimes write $\mathcal{B}$ as $\mathcal{A}(\ast)$. For an open Jacobi diagram, we call it a {\it $n$-loop diagram} if its first Betti number is $n$. We define $\mathcal{B}^{(n)}$ to be the quotient $\mathbb{Q}$-vector space spanned by $n$-loop open Jacobi diagrams subject to the AS, IHX relations. The PBW isomorphism $\chi:\mathcal{B}\to\mathcal{A}(\downarrow)$ is defined by
\begin{align*}
\mbox{$\begin{array}{c}
   \includegraphics[scale=0.25]{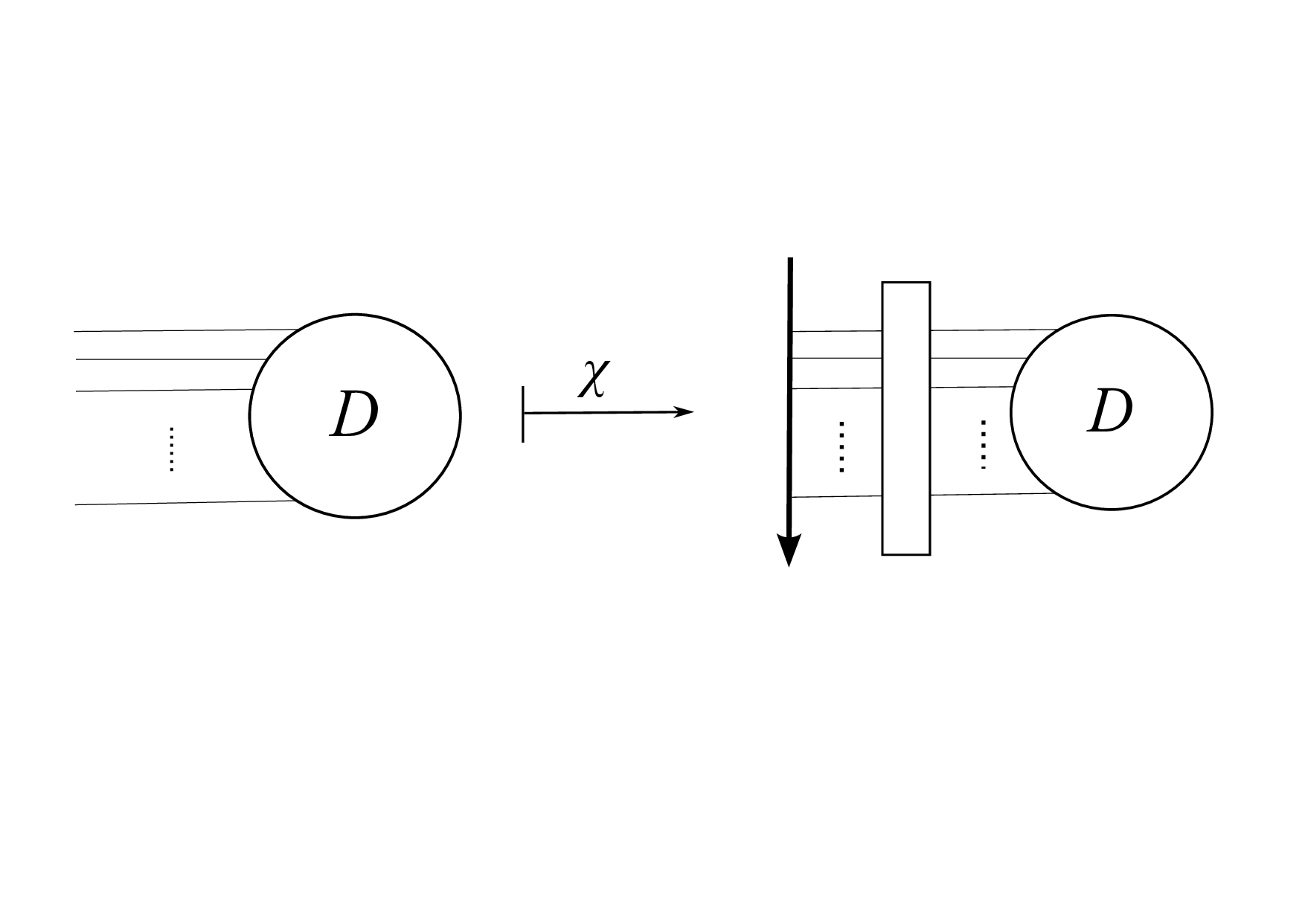}
   \end{array}$}
\end{align*}
for any diagram $D \in \mathcal{B}$, where the box means the symmetrizer,\\
\begin{align*}
\mbox{$\begin{array}{c}
   \includegraphics[scale=0.3]{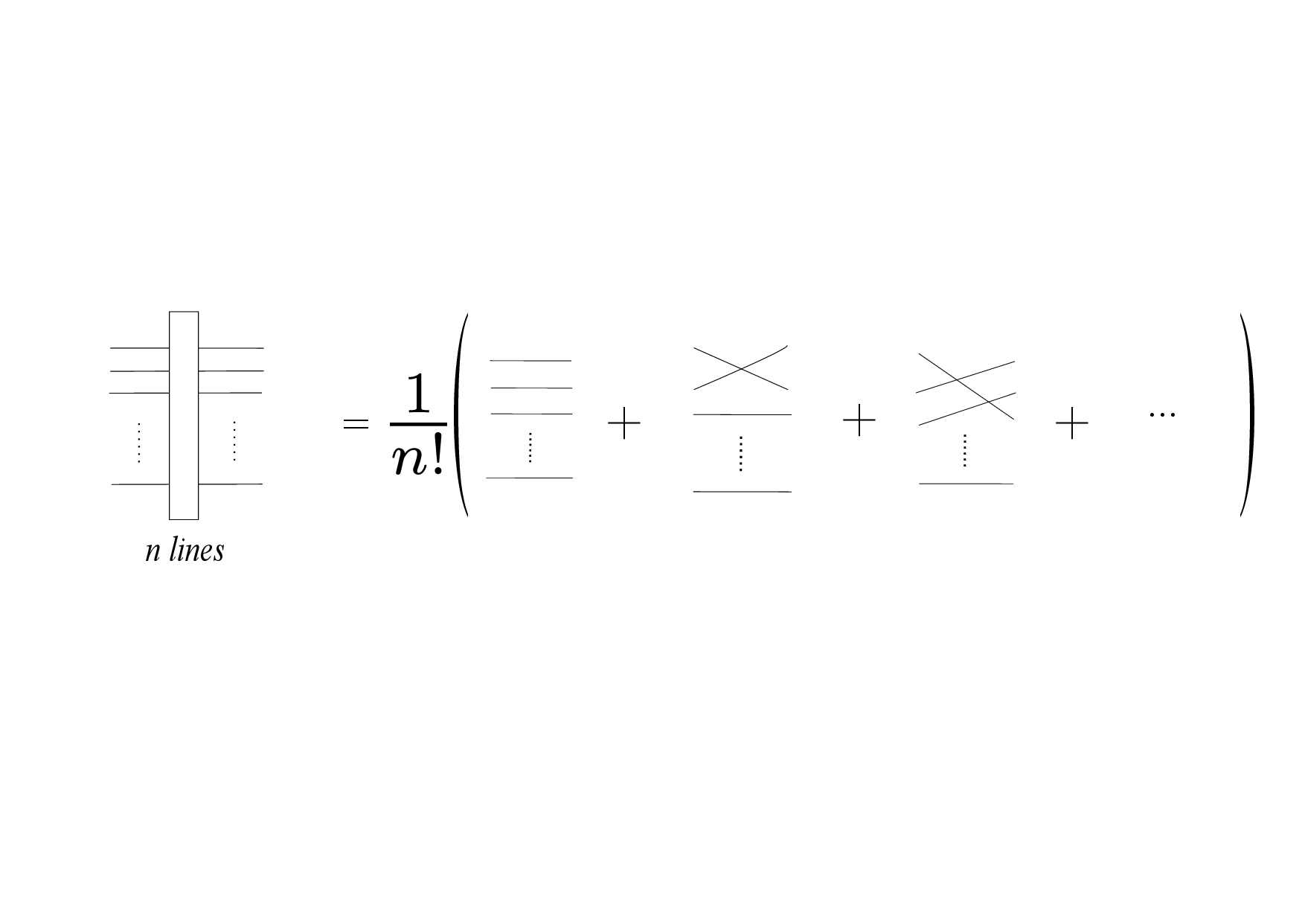}
   \end{array}$}.
\end{align*}
For a Jacobi diagram, we define the {\it degree} of it to be half the number of all vertices.\par
For a knot $K$, the {\it Kontsevich invariant} $Z(K)$ is defined to be in $\widetilde{\mathcal{A}(\downarrow)}$, and we also consider the Kontsevich invariant $\chi^{-1}Z(K)$ defined to be in $\widetilde{\mathcal{B}}$. Here, $\widetilde{\mathcal{A}(\downarrow)}$ (resp. $\widetilde{\mathcal{B}}$) denotes the completion of $\mathcal{A}(X)$ (resp. $\mathcal{B}$) with respect to the degree. It is known that each coefficient of the Kontsevich invariant is a Vassiliev invariant. It is also  known that $\chi^{-1}Z(K)$ can be presented as $\chi^{-1}Z(K)=\exp(\beta)$. Here, $\beta$ is an element in $\widetilde{\mathcal{B}_{\text{conn}}}$, the completion of the quotient $\mathbb{Q}$-vector space spanned by connected open Jacobi diagrams subject to the AS, IHX relations, and the exponential map is with respect to the product in $\widetilde{\mathcal{B}}$ defined by disjoint union of open Jacobi diagrams. Thus, we can consider $\log\big(\chi^{-1}Z(K)\big)\in\widetilde{\mathcal{B}_{\text{conn}}}$. For more details, see for example \cite{Oh1,Oh2}. For a knot $K$, we define {\it $n$-loop Kontsevich invariant} by $Z^{(n)}(K):=\iota_n\Big(\log\big(\chi^{-1}Z(K)\big)\Big)$, where $\iota_n:\widetilde{\mathcal{B}_{\text{conn}}}\to\widetilde{\mathcal{B}_{\text{conn}}^{(n)}}$ is the natural restriction map.

\section{The $n$-loop Kontsevich invariant of knots having the same Alexander polynomial}
\label{sec3}

In this section, we state the main results of this paper.\par
For $\Delta(t)\in\mathcal{Z}$, let $\mathcal{K}^{\Delta(t)}$ be a set of all knots whose values of Alexander polynomial are $\Delta(t)$, and let $\mathcal{V}(n,\Delta(t))$ be the subspace of $\widetilde{\mathcal{B}_{\text{conn}}^{(n)}}$ generated by the set $Z^{(n)}(\mathcal{K}^{\Delta(t)})$. We can show that $\mathcal{V}(n,\Delta(t))$ is infinite dimensional, see Appendix for its proof. This means that $Z^{(n)}(\mathcal{K}^{\Delta(t)})$ has the information of infinitely many Vassiliev invariants of those knots. Then, we consider the following filtration of the subspaces, 
\begin{align*}
\mathcal{V}(n,1,\Delta(t))\subset \mathcal{V}(n,2,\Delta(t))\subset\cdots\subset \mathcal{V}(n,\infty,\Delta(t)):=\bigcup_{g\geq 1}\mathcal{V}(n,g,\Delta(t))=\mathcal{V}(n,\Delta(t)).
\end{align*} 
Here $\mathcal{V}(n,g,\Delta(t))$ denotes the subspace of $\widetilde{\mathcal{B}_{\text{conn}}^{(n)}}$ generated by the set $Z^{(n)}(\mathcal{K}_{\leq g}^{\Delta(t)})$, where $\mathcal{K}_{\leq g}^{\Delta(t)}$ denotes the set of all genus $\leq g$ knots whose values of Alexander polynomial are $\Delta(t)$. It can be shown that the set $\mathcal{K}_{\leq g}^{\Delta(t)}$ is an infinite set unless it is $\emptyset$, see Appendix for its proof. Note that if $\text{deg}\Delta(t)>g$, we have $\mathcal{K}_{\leq g}^{\Delta(t)}=\emptyset$, since the breadth of the Alexander polynomial of any genus $\leq g$ knot is less than or equal to $2g$. Thus, the above filtration can be written as 
\begin{align*}
\emptyset=\cdots=\emptyset\subset \mathcal{V}(n,m,\Delta(t))\subset \mathcal{V}(n,m+1,\Delta(t))\subset\cdots\subset \mathcal{V}(n,\infty,\Delta(t)),
\end{align*}
where $m=\text{deg}\Delta(t)$.

\begin{thm}
\label{thm1}
For any integer $n\geq 2$, $g\geq 1$ and $\Delta(t)\in\mathcal{Z}$, the subspace $\mathcal{V}(n,g,\Delta(t))$ is finite dimensional. 
In particular, there exist finitely many elements $\beta_1,\cdots,\beta_d\in\widetilde{\mathcal{B}_{\text{conn}}^{(n)}}$ such that 
\begin{align*}
Z^{(n)}|_{\mathcal{K}_{\leq g}^{\Delta(t)}}=c_1\beta_1+\cdots+c_d\beta_d,
\end{align*}
where $c_1,\cdots,c_d:\mathcal{K}_{\leq g}^{\Delta(t)}\to\mathbb{Q}$ are (restrictions of) Vassiliev invariants.
\end{thm}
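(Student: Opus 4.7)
The plan is to combine the loop expansion of the Kontsevich invariant, obtained via the rational version of the Aarhus integral, with the Seifert-form presentation of a genus $\leq g$ knot. First I would recall that every $K\in\mathcal{K}_{\leq g}^{\Delta(t)}$ admits a surgery presentation in which its Kontsevich invariant is expressed as the rational Aarhus integral of a framed bottom tangle with $2g$ strands whose linking matrix is a Seifert matrix $V$ of $K$, satisfying $\det(tV-V^T)\doteq\Delta(t)$. The loop expansion then identifies the $n$-loop part $Z^{(n)}(K)$ as a finite sum indexed by isomorphism classes of connected trivalent $n$-loop graphs $\Gamma$; each $\Gamma$ contributes a diagram in $\widetilde{\mathcal{B}^{(n)}_{\text{conn}}}$ obtained by decorating every edge of $\Gamma$ with a matrix entry of the propagator $(tV-V^T)^{-1}$ evaluated at an appropriate variable $t_k$, and then expanding the resulting rational functions as formal power series near $t_k=1$.

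The second step is to bound the complexity of these edge decorations independently of the knot. Since $tV-V^T$ is a $2g\times 2g$ matrix of affine polynomials in $t$ with determinant $\doteq\Delta(t)$, each entry of its inverse is a rational function $p(t)/\Delta(t)$ where $p(t)$ is a cofactor, hence a polynomial of $t$-degree at most $2g-1$. Thus, as $K$ ranges over $\mathcal{K}_{\leq g}^{\Delta(t)}$, the possible edge decorations lie in the finite-dimensional $\mathbb{Q}$-vector space
\[
W_g^{\Delta}\;=\;\left\{\,\frac{p(t)}{\Delta(t)}\;\Big|\;p(t)\in\mathbb{Q}[t^{\pm 1}],\;\deg p\le 2g-1\,\right\}.
\]
Only finitely many $n$-loop graph shapes $\Gamma$ contribute for a fixed $n$, so every possible edge-decorated graph lies in a single finite-dimensional $\mathbb{Q}$-vector space $\mathcal{W}(n,g,\Delta)$ that maps linearly into $\widetilde{\mathcal{B}^{(n)}_{\text{conn}}}$ via the series expansion. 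Consequently $\mathcal{V}(n,g,\Delta(t))$, being contained in the image of this map, is finite dimensional.

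To establish the explicit presentation in the theorem, I would fix a basis $\beta_1,\dots,\beta_d$ of $\mathcal{V}(n,g,\Delta(t))$ and define $c_i\colon\mathcal{K}_{\leq g}^{\Delta(t)}\to\mathbb{Q}$ by expanding $Z^{(n)}(K)$ in this basis. Because the expansion of the rational functions in $W_g^{\Delta}$ is polynomial in each graded piece, every coordinate $c_i(K)$ is a finite $\mathbb{Q}$-linear combination of coefficients of $Z^{(n)}(K)$ against Jacobi diagrams of bounded degree. Since every such coefficient is a Vassiliev invariant of bounded degree, the $c_i$ are restrictions of Vassiliev invariants, as required.

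The main obstacle will be the second step: one must ensure that the rational Aarhus integral really can be organized so that only finitely many graph shapes and rational functions of bounded numerator degree occur, even though $V$ itself may have arbitrarily large integer entries. This requires a careful normalisation of the surgery presentation associated with a Seifert surface so that contributions from clasper configurations of higher ``complexity'' are either absent or systematically absorbed into the bounded data above. Verifying that this normalization is compatible with the loop-expansion formula and that the resulting bounds depend only on $g$ and $\Delta(t)$ (not on $K$) is the technical heart of the proof.
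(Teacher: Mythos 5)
Your overall strategy---present $K\in\mathcal{K}_{\leq g}^{\Delta(t)}$ by a surgery link arising from a genus $g$ Seifert surface, note that the relevant equivariant linking matrix has determinant essentially $\Delta(t)$ and entries of $t$-degree bounded in terms of $g$, and conclude that $Z^{(n)}(K)$ lands in a fixed finite-dimensional space of trivalent $n$-loop graphs with edges decorated by elements $p(t)/\Delta(t)$ of bounded numerator degree---is the same as the paper's, and your final step (choosing a basis and reading off the coordinates as Vassiliev invariants) also matches.

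However, there is a genuine gap, and it is exactly the one you flag in your closing paragraph without resolving. Bounding the propagator entries is not enough. In the rational Aarhus integral the $n$-loop diagrams are obtained by pairing $\exp\bigl(-\tfrac{1}{2}\sum l^{ij}(t)\,(\cdots)\bigr)$ against $P\bigl(\chi^{-1}\check{Z}(K_0\cup L)\bigr)$, so the edges of the resulting glued graphs come from two sources: the propagators $l^{ij}(t)=q^{ij}(t)/\Delta(t)$, whose numerator degrees you correctly bound, and the internal edges and labels of the diagrams occurring in $P\bigl(\chi^{-1}\check{Z}(K_0\cup L)\bigr)$, i.e.\ of the Kontsevich invariant of the representing tangle itself. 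A priori these internal labels could involve arbitrarily high powers of $t$ as the tangle becomes more complicated, which would make the numerators of the final edge decorations unbounded as $K$ ranges over $\mathcal{K}_{\leq g}^{\Delta(t)}$; your sentence asserting that each graph $\Gamma$ is decorated only ``with a matrix entry of the propagator'' silently assumes this does not happen. The paper closes this gap with Lemma \ref{lem1} and Corollary \ref{cor2}: the surgery presentation is chosen so that every component of $L$ has linking number $0$ with $K_0$, whence by the long Hopf link formula (\ref{xzt}) all $t$-labels initially sit on the $X$-skeleton, the zero-linking condition allows them to be slid onto internal edges as labels in $\{\emptyset,t,t^{-1}\}$ only via the relations (\ref{trel}), and the PBW map $\chi_X^{-1}$ preserves this property (an induction using the STU relation). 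This yields the uniform statement $\chi^{-1}\check{Z}(K_0\cup L)\in A_t(\ast_X)$ for every $K\in\mathcal{K}_{\leq g}^{\Delta(t)}$, after which the gluing lands in the finite-dimensional space $E_0(n,g+2,\Delta(t))$. Without an argument of this kind, your second step does not go through.
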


We denote $d(n,g,\Delta(t)):=\text{dim}\mathcal{V}(n,g,\Delta(t))$. Note that $d(n,g,\Delta(t))=0$ for any $\Delta(t)\in\mathcal{Z}$ with $\text{deg}\Delta(t)>g$.

We present $Z^{(n)}|_{\mathcal{K}_{\leq g}^{\Delta(t)}}=\sum_{j\geq 1}v_jD_j$, where $\{D_j\}_{j\geq 1}\subset\mathcal{B}_{\text{conn}}^{(n)}$, and $v_j:\mathcal{K}_{\leq g}^{\Delta(t)}\to\mathbb{Q}$ is a (restriction of) Vassiliev invariant.

\begin{cor}
\label{corvv}
For any integer $n\geq 2$, $g\geq 1$ and $\Delta(t)\in\mathcal{Z}$, we fix above $\{D_j\}_{j\geq 1}$ and $\{v_j\}_{j\geq 1}$, and we put $d:=d(n,g,\Delta(t))$. Then, there exist $d$ elements $v_{j_1},\cdots,v_{j_d}\in\{v_j\}_{j\geq 1}$, such that
\begin{align*}
\{v_j\}_{j\geq 1}\subset\text{span}_{\mathbb{Q}}\{v_{j_1},\cdots,v_{j_d}\}.
\end{align*}
In particular, for any $K_1,K_2\in\mathcal{K}_{\leq g}^{\Delta(t)}$, we have
\begin{align*}
v_{j_k}(K_1)=v_{j_k}(K_2)\quad(\text{for all $k=1,\cdots,d$})\iff Z^{(n)}(K_1)=Z^{(n)}(K_2).
\end{align*}
\end{cor}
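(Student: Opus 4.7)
The plan is a short linear-algebra deduction from Theorem \ref{thm1}. The main idea is that each $v_j$, viewed as a function on $\mathcal{K}_{\leq g}^{\Delta(t)}$, must factor through the $d$-dimensional image $Z^{(n)}(\mathcal{K}_{\leq g}^{\Delta(t)})\subset\mathcal{V}(n,g,\Delta(t))$; this forces $\mathrm{span}_{\mathbb{Q}}\{v_j\}_{j\geq 1}$ to have dimension at most $d$, from which extracting $d$ spanning elements of $\{v_j\}_{j\geq 1}$ is routine, and the biconditional follows by matching coefficients of the $D_j$.

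Concretely, since $\mathcal{V}(n,g,\Delta(t))$ is $d$-dimensional and spanned by $Z^{(n)}(\mathcal{K}_{\leq g}^{\Delta(t)})$ by Theorem \ref{thm1}, I first pick knots $K^{(1)},\ldots,K^{(d)}\in\mathcal{K}_{\leq g}^{\Delta(t)}$ whose $Z^{(n)}$-images form a basis of $\mathcal{V}(n,g,\Delta(t))$. For each $K\in\mathcal{K}_{\leq g}^{\Delta(t)}$ there exist unique rationals $\lambda_1(K),\ldots,\lambda_d(K)$ with $Z^{(n)}(K)=\sum_{k=1}^{d}\lambda_k(K)\,Z^{(n)}(K^{(k)})$. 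Equating this with the given presentation $Z^{(n)}(K)=\sum_{j}v_j(K)D_j$ and matching coefficients of the (linearly independent) $D_j$ yields $v_j(K)=\sum_{k=1}^{d}v_j(K^{(k)})\,\lambda_k(K)$ for every $j$, so each $v_j$ lies in $\mathrm{span}_{\mathbb{Q}}\{\lambda_1,\ldots,\lambda_d\}$ and $W:=\mathrm{span}_{\mathbb{Q}}\{v_j\}_{j\geq 1}$ satisfies $\dim W\leq d$. I then pick a basis of $W$ from inside $\{v_j\}_{j\geq 1}$, padding with arbitrary further elements of $\{v_j\}_{j\geq 1}$ if necessary, to obtain exactly $d$ elements $v_{j_1},\ldots,v_{j_d}$ whose span is still $W$; this gives the first inclusion of the corollary.

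The biconditional then follows immediately. The direction ($\Leftarrow$) is clear because matching coefficients of the $D_j$ in $Z^{(n)}(K_1)=Z^{(n)}(K_2)$ gives $v_j(K_1)=v_j(K_2)$ for every $j$, in particular for $j=j_k$. For ($\Rightarrow$), if $v_{j_k}(K_1)=v_{j_k}(K_2)$ for all $k=1,\ldots,d$, then since every $v_j\in W$ is a $\mathbb{Q}$-linear combination of $v_{j_1},\ldots,v_{j_d}$ we obtain $v_j(K_1)=v_j(K_2)$ for all $j$, and therefore $Z^{(n)}(K_1)=\sum_{j}v_j(K_1)D_j=\sum_{j}v_j(K_2)D_j=Z^{(n)}(K_2)$. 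The only subtlety in this plan is the tacit linear independence of $\{D_j\}_{j\geq 1}$ in $\widetilde{\mathcal{B}_{\mathrm{conn}}^{(n)}}$, needed to match coefficients in both steps; this should be read into ``we present'', since otherwise the $v_j$ are not uniquely determined as Vassiliev invariants. Apart from this bookkeeping, the argument is a routine consequence of Theorem \ref{thm1}.
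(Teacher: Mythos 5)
Your proposal is correct and matches the intended argument: the paper offers no written proof beyond the remark that the corollary ``immediately follows from Theorem \ref{thm1}'', and your linear-algebra deduction (factoring each $v_j$ through the $d$-dimensional space $\mathcal{V}(n,g,\Delta(t))$ via knots whose $Z^{(n)}$-images form a basis, then extracting and padding to $d$ spanning elements) is a correct and complete fleshing-out of that remark. You also rightly flag the one tacit hypothesis, the linear independence of the $\{D_j\}$ needed to match coefficients, which is indeed how the presentation $Z^{(n)}|_{\mathcal{K}_{\leq g}^{\Delta(t)}}=\sum_{j\geq 1}v_jD_j$ must be read for the $v_j$ to be well defined.
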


Corollary \ref{corvv} immediately follows from Theorem \ref{thm1}.

\begin{rem}
Note that a choice of elements $v_{j_1},\cdots,v_{j_d}\in\{v_j\}_{j\geq 1}$ in Corollary \ref{corvv} is not unique.
\end{rem}

The above statements indicate that $Z^{(n)}|_{\mathcal{K}_{\leq g}^{\Delta(t)}}$ has the information of only finitely many Vassiliev invariants, and we can present each coefficient of it by a linear sum of those Vassiliev invariants. Thus, roughly speaking, $d(n,g,\Delta(t))$ represents ``how many Vassiliev invariants determine $Z^{(n)}|_{\mathcal{K}_{\leq g}^{\Delta(t)}}$''. We consider the following problem.

\begin{prob}
For any integer $n\geq 2$, $g\geq 1$ and $\Delta(t)\in\mathcal{Z}$, determine $\mathcal{V}(n,g,\Delta(t))$ and $d(n,g,\Delta(t))$.
\end{prob}

It seems hard to solve this problem completely. Here, we solve this for the following simplest case.

\begin{thm}
\label{t21d}
For $\Delta(t)=1+a(t+t^{-1}-2)\in\mathcal{Z}$, where $a\in\mathbb{Z}$, we have
\begin{align*}
\mathcal{V}(2,1,\Delta(t))=\text{span}_{\mathbb{Q}}(\theta_1,\theta_2).
\end{align*}
Here
\begin{align*}
\theta_1&={\mbox{$\begin{array}{c}
   \includegraphics[scale=0.35]{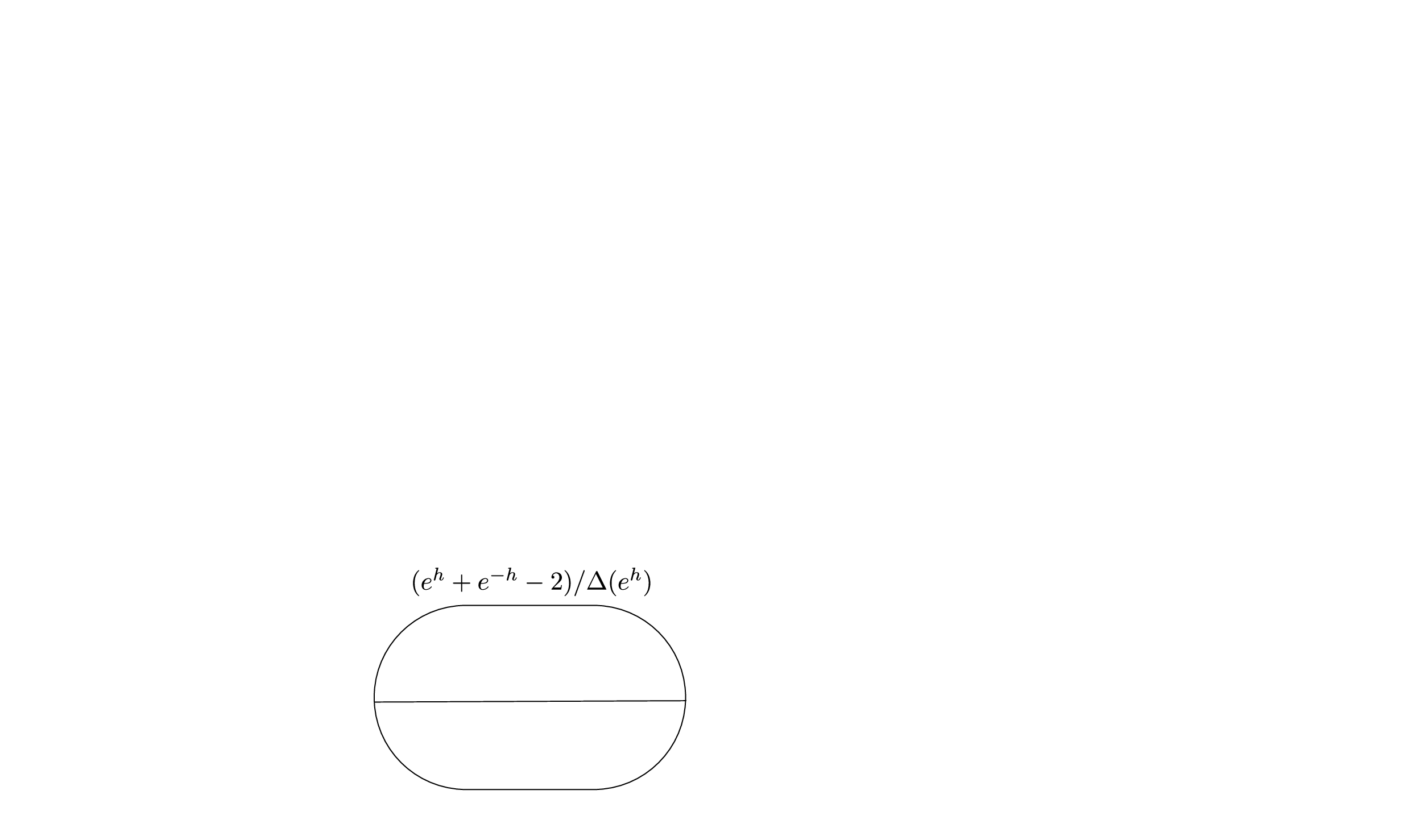}
   \end{array}$}}\in\widetilde{\mathcal{B}_{\text{conn}}^{(2)}},\\*
\theta_2&={\mbox{$\begin{array}{c}
   \includegraphics[scale=0.35]{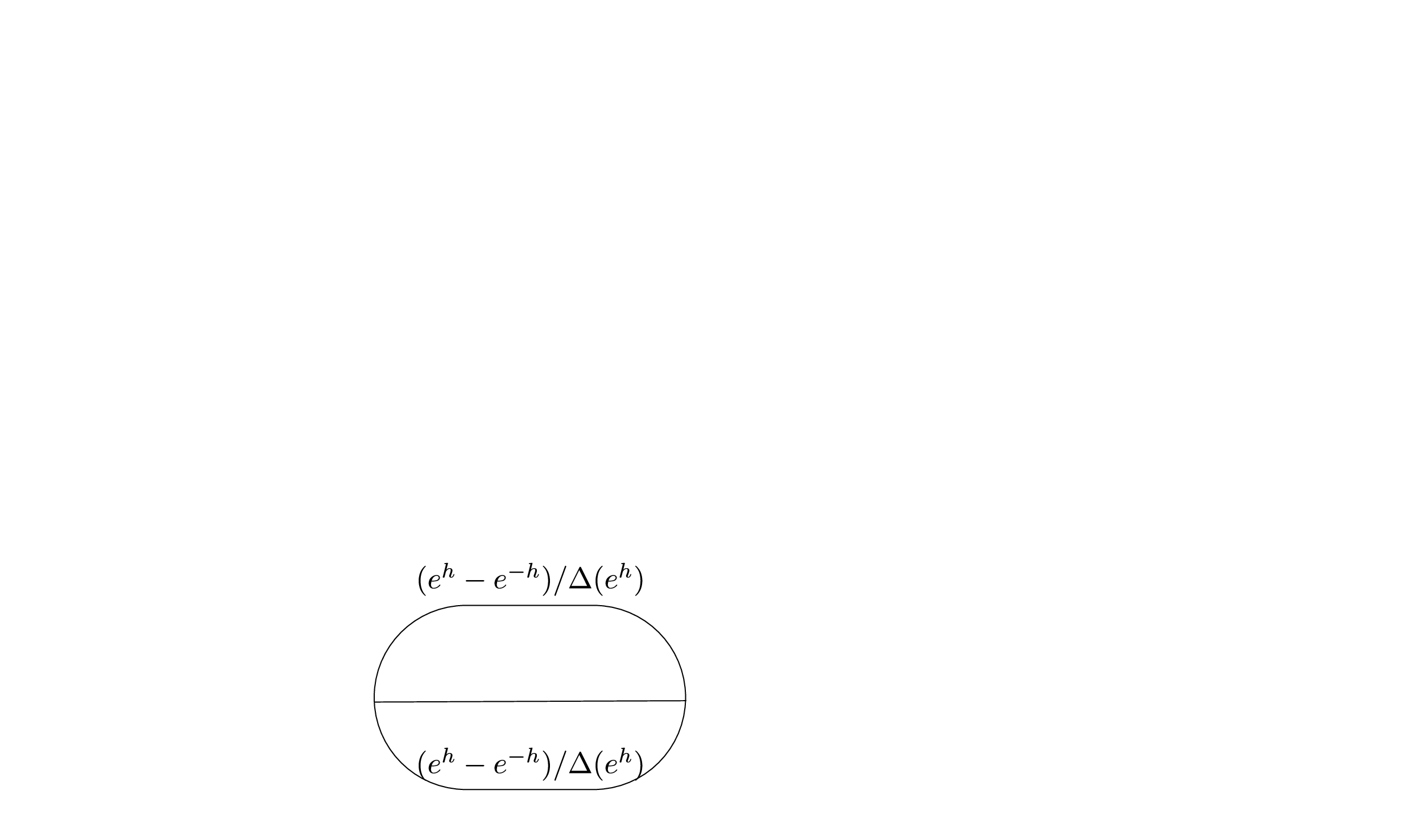}
   \end{array}$}}+\left(\df{4}{3}a-\df{1}{3}\right){\mbox{$\begin{array}{c}
   \includegraphics[scale=0.35]{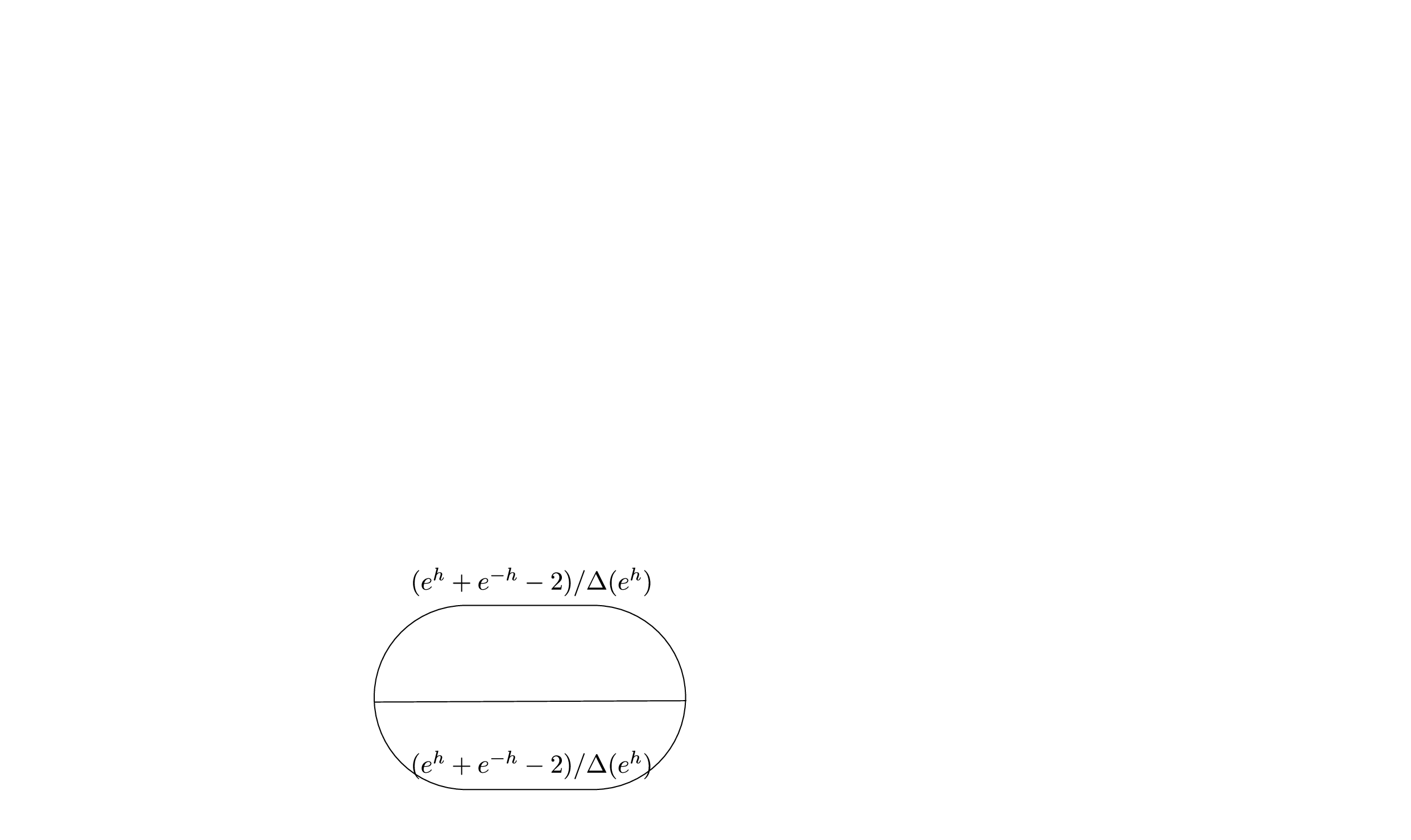}
   \end{array}$}}\in\widetilde{\mathcal{B}_{\text{conn}}^{(2)}}.
\end{align*}
where we regard $(e^h+e^{-h}-2)/\Delta(e^h)$ and $(e^h-e^{-h})/\Delta(e^h)$ as power series in $h$, and we define a labeling on one side of an edge of a Jacobi diagram by an infinite series in $f(h)=c_0+c_1h+\cdots$ by\\
\begin{align*}
\mbox{$\begin{array}{c}
   \includegraphics[scale=0.3]{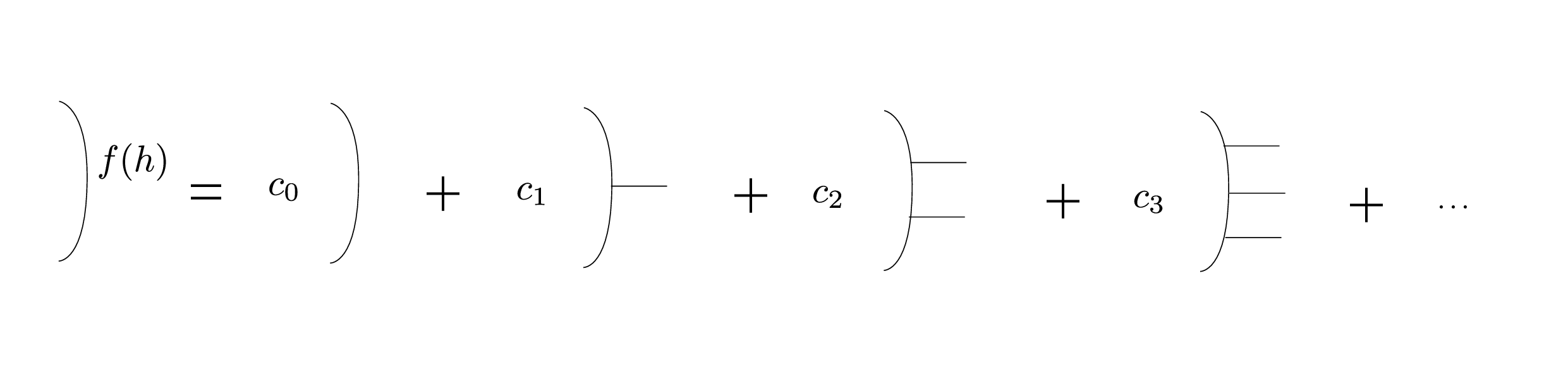}
   \end{array}$}\in\widetilde{\mathcal{B}},
\end{align*}
(for details, see Section \ref{sec4}). In particular, for any $\Delta(t)\in\mathcal{Z}$, we have
\begin{align*}
d(2,1,\Delta(t))=\left\{
\begin{array}{ll}
2&\text{when $\Delta(t)=1+a(t+t^{-1}-2)$ for $a\in\mathbb{Z}$}\\
0&\text{otherwise}
\end{array}
\right.
\end{align*}
\end{thm}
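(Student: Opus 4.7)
The case $\text{deg}\,\Delta(t)>1$ is immediate: one has $\mathcal{K}_{\leq 1}^{\Delta(t)}=\emptyset$ as noted just before the theorem, so $d(2,1,\Delta(t))=0$. I therefore focus on $\Delta(t)=1+a(t+t^{-1}-2)$. The plan is to use the rational version of the Aarhus integral (reviewed in Section~\ref{sec4}) to produce a universal closed-form expression for $Z^{(2)}(K)$ valid for every $K\in\mathcal{K}_{\leq 1}^{\Delta(t)}$, to recognise that expression as a $\mathbb{Q}$-linear combination of $\theta_1$ and $\theta_2$ whose coefficients are Vassiliev invariants of $K$, and to verify that $\theta_1,\theta_2$ are linearly independent in $\widetilde{\mathcal{B}_{\text{conn}}^{(2)}}$.

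For the universal formula, I would fix a genus $1$ Seifert surface $\Sigma$ for $K$ and pass to an associated clasper-surgery presentation. The $2\times 2$ Seifert matrix $V$ has skew-symmetric part $V-V^T$ equal (up to sign) to the standard symplectic form, and satisfies $\det(tV-V^T)=\Delta(t)$. The rational Aarhus integral then expresses $\chi^{-1}Z(K)$ as a formal series of open Jacobi diagrams whose edges carry labels drawn from the entries of $M(t):=(tV-V^T)^{-1}\in\Delta(t)^{-1}\mathbb{Z}[t^{\pm 1}]$. Projecting onto the connected $2$-loop part, every contribution is a theta-shaped graph with three labelled edges, since every connected $2$-loop open Jacobi diagram reduces modulo AS/IHX to a theta graph. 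Under the constraint on $V-V^T$, the entries of $M(e^h)$ become $\mathbb{Z}$-linear combinations of the two labels $(e^h+e^{-h}-2)/\Delta(e^h)$ and $(e^h-e^{-h})/\Delta(e^h)$, modulo terms that vanish under the labelling calculus of Section~\ref{sec4} (odd labels killed by AS on symmetric edge pairs, constant labels collapsed via $\Delta(1)=1$). Summing the finitely many theta contributions and using IHX to reduce the remaining ``bubble'' subdiagrams into vertex corrections yields $Z^{(2)}(K)=c_1(K)\theta_1+c_2(K)\theta_2$; the coefficient $\tfrac{4}{3}a-\tfrac{1}{3}$ appearing inside $\theta_2$ is forced by the reduction of one such bubble, whose value depends linearly on $a$ through $\mathrm{tr}(V+V^T)$.

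The coefficients $c_1,c_2$ are Vassiliev invariants because they arise as coefficients of $Z^{(2)}(K)$ against a fixed basis indexed by powers of $h$. For linear independence of $\theta_1,\theta_2$ in $\widetilde{\mathcal{B}_{\text{conn}}^{(2)}}$, I expand the two labellings as power series in $h$ and observe that the lowest-degree component of $\theta_1$ is a nonzero multiple of the bare theta graph, whereas the lowest nonzero-degree component of $\theta_2$ involves a distinct diagram produced by the odd label $(e^h-e^{-h})/\Delta(e^h)$ together with the correction term, so no nontrivial $\mathbb{Q}$-linear combination of them vanishes. The main obstacle is the reduction step: pinning down the exact constant $\tfrac{4}{3}a-\tfrac{1}{3}$ inside $\theta_2$ requires careful bookkeeping of the normalisation conventions in the rational Aarhus integral and of bubble absorption via IHX, with the $a$-dependence tracked through the entries of $V+V^T$ that remain after fixing $V-V^T$ and $\det(tV-V^T)=\Delta(t)$.
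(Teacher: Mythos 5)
Your argument establishes only one of the two inclusions. The Aarhus-integral computation (which is essentially a re-derivation of Theorem 3.1 of \cite{Oh4}, cited directly in the paper) gives the universal form $Z^{(2)}(K)=c_1(K)\theta_1+c_2(K)\theta_2$, hence $\mathcal{V}(2,1,\Delta(t))\subset\text{span}_{\mathbb{Q}}(\theta_1,\theta_2)$ and $d(2,1,\Delta(t))\leq 2$. But verifying that $\theta_1,\theta_2$ are linearly independent in $\widetilde{\mathcal{B}_{\text{conn}}^{(2)}}$ only shows that the \emph{target} span is two-dimensional; it says nothing about whether the subspace generated by the actual image $Z^{(2)}(\mathcal{K}_{\leq 1}^{\Delta(t)})$ fills it. A priori the pair $(c_1(K),c_2(K))$ could range over a line (or a point) as $K$ varies, in which case $\mathcal{V}(2,1,\Delta(t))$ would be a proper subspace and the claimed equality and $d=2$ would fail. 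You never produce knots realising two linearly independent values, so the lower bound $d(2,1,\Delta(t))\geq 2$ is missing.

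The paper closes exactly this gap by explicit construction: it takes three genus-one knots $K_0,K_1,K_2$ with Alexander polynomial $\Delta(t)$, obtained from a standard genus-one knot by clasper surgeries, computes $Z^{(2)}(K_i)$ as explicit linear combinations of $\theta_1,\theta_2$ using Proposition 3.9 and Lemmas 3.11--3.12 of \cite{Oh4}, and checks that for every $a\in\mathbb{Z}$ some pair among them has nonvanishing $2\times 2$ determinant (one determinant works for $a\neq 0,-1$, another for $a=0,-1$). Your proof needs an analogous realisation step; without it the statement proved is only $\mathcal{V}(2,1,\Delta(t))\subset\text{span}_{\mathbb{Q}}(\theta_1,\theta_2)$. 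The remainder of your outline (the ``otherwise'' case, the linear independence of $\theta_1,\theta_2$ via the low-degree expansion, and the identification of the coefficients as Vassiliev invariants) is consistent with the paper, though the bookkeeping that produces the constant $\frac{4}{3}a-\frac{1}{3}$ is left as an acknowledged obstacle rather than carried out.
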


Theorem \ref{t21d} can be regarded as a corollary of Theorem 3.1 of \cite{Oh4}.\par
A formula of $Z^{(2)}$ for genus 1 knots is given in \cite{Oh4}, in terms of finite type invariants of representation tangles of these knots, see Section \ref{sec5} for representation tangles of knots. As a corollary of Theorem \ref{t21d}, we can give another formula of $Z^{(2)}$ for genus 1 knots, in terms of Vassiliev invariants of these knots.

\begin{cor}
\label{2loop}
Let $K$ be a genus 1 knot whose Alexander polynomial is $1+a(t+t^{-1}-2)$ for $a\in\mathbb{Z}$, and we present the first two term of $Z^{(2)}(K)$ as
\begin{align*}
Z^{(2)}(K)=b_1{\mbox{$\begin{array}{c}
   \includegraphics[scale=0.2]{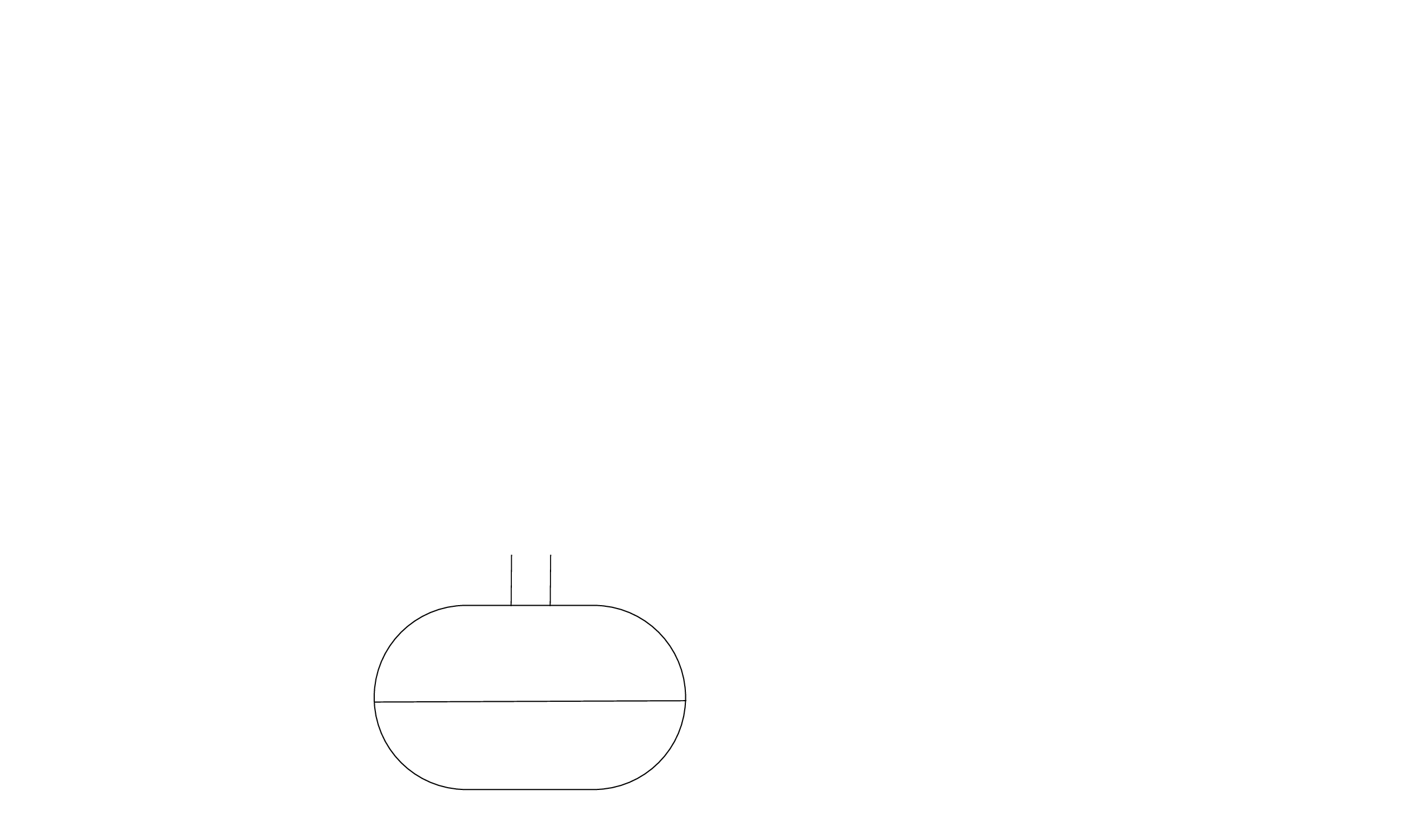}
   \end{array}$}}+b_2{\mbox{$\begin{array}{c}
   \includegraphics[scale=0.2]{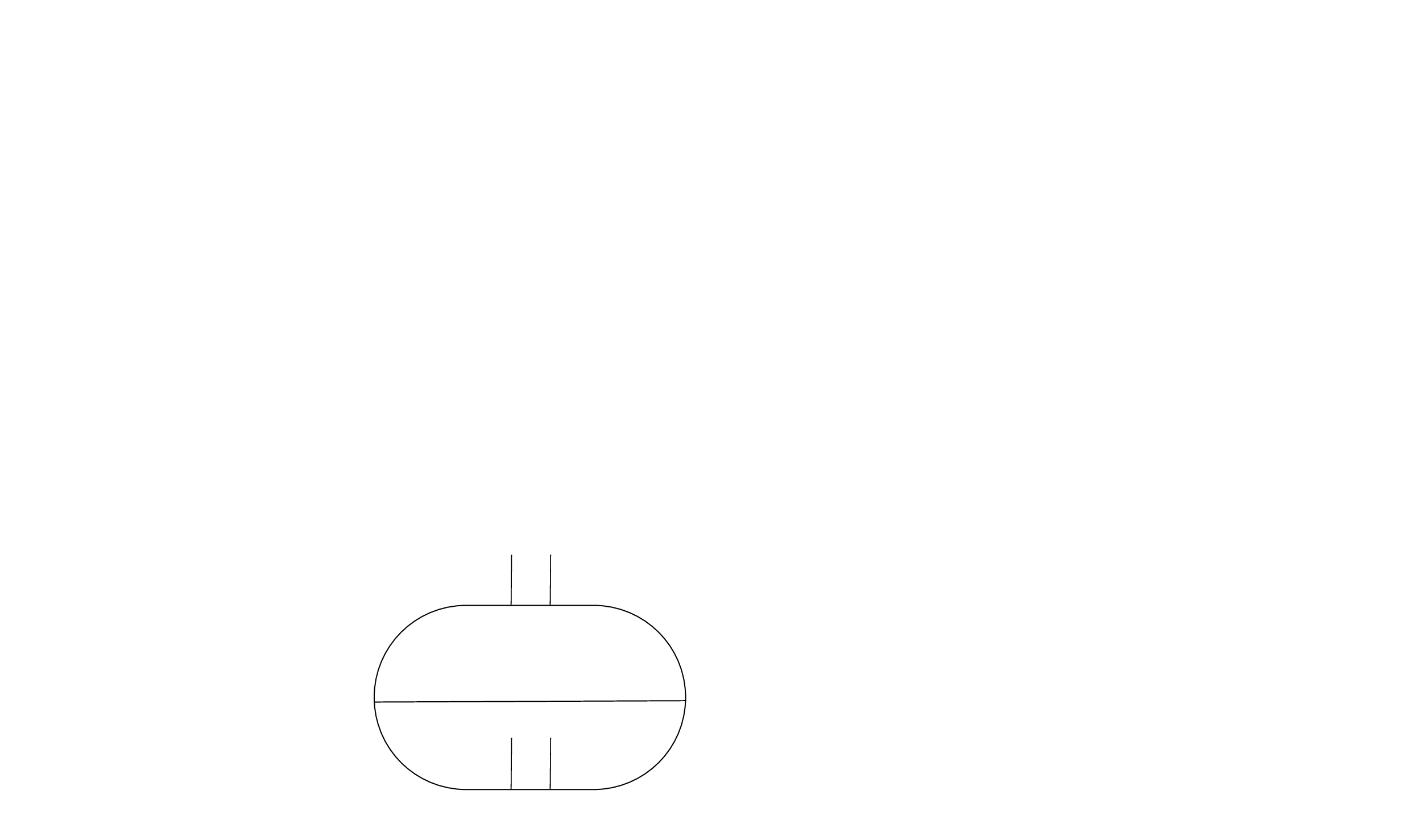}
   \end{array}$}}+\text{(terms with higher degree)},
\end{align*}
where $b_1,b_2$ are Vassiliev invariants of degree $3,5$, respectively. Then, we have
\begin{align*}
Z^{(2)}(K)=&\df{(28a-5)b_1+6b_2}{16a-4}{\mbox{$\begin{array}{c}
   \includegraphics[scale=0.25]{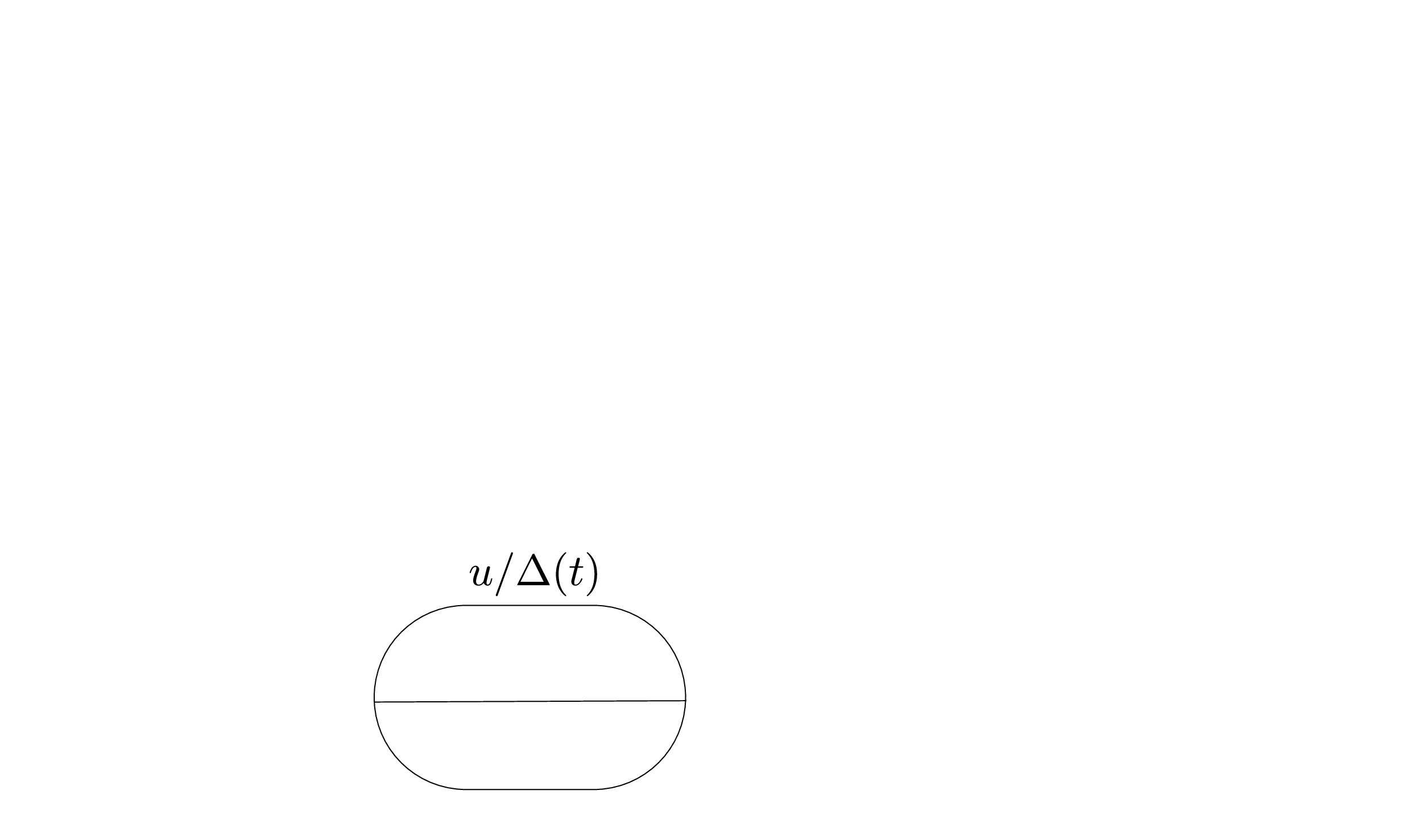}
   \end{array}$}}+\df{(12a-1)b_1+6b_2}{32a-8}{\mbox{$\begin{array}{c}
   \includegraphics[scale=0.25]{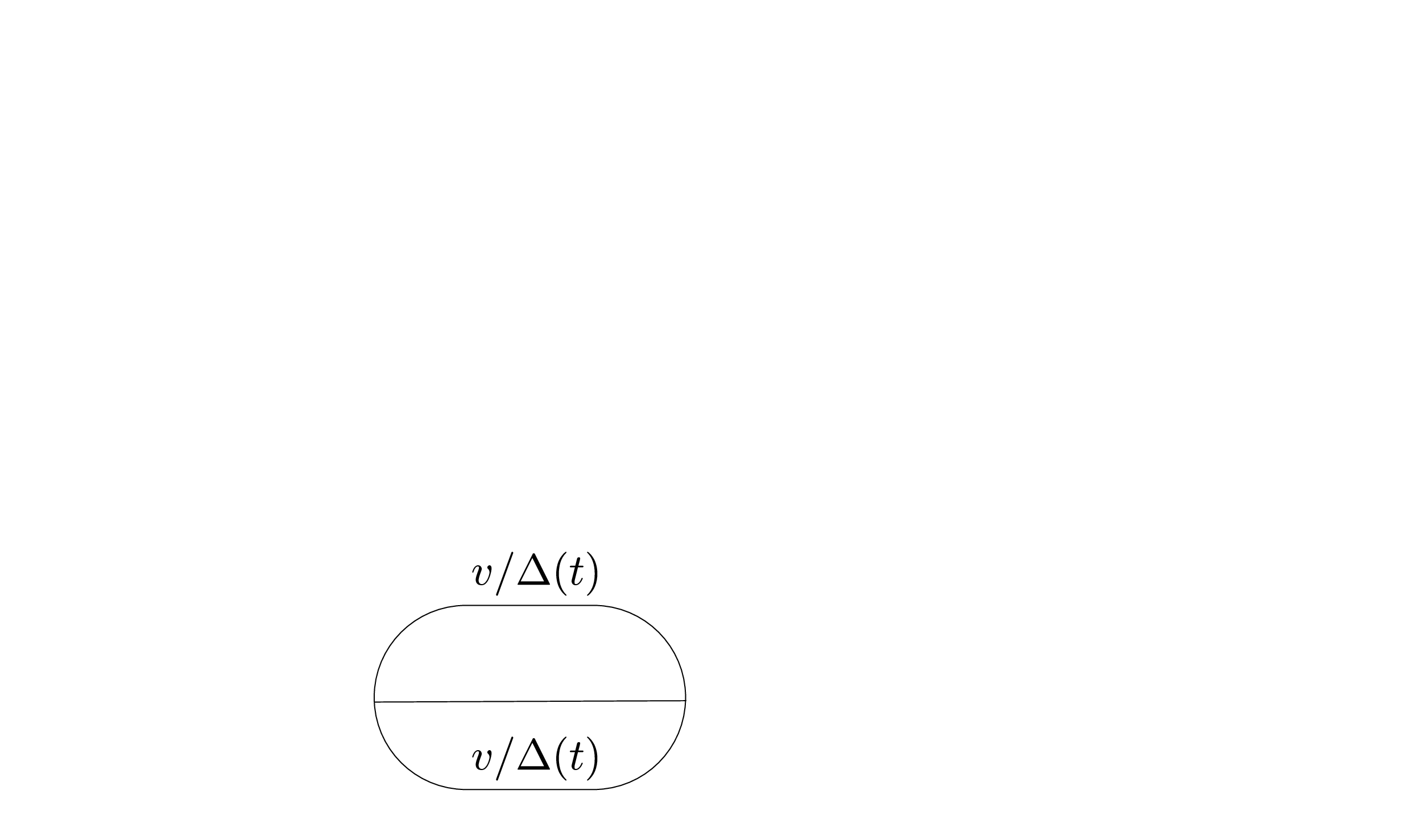}
   \end{array}$}}\\*
&+\df{(12a-1)b_1+6b_2}{24}{\mbox{$\begin{array}{c}
   \includegraphics[scale=0.25]{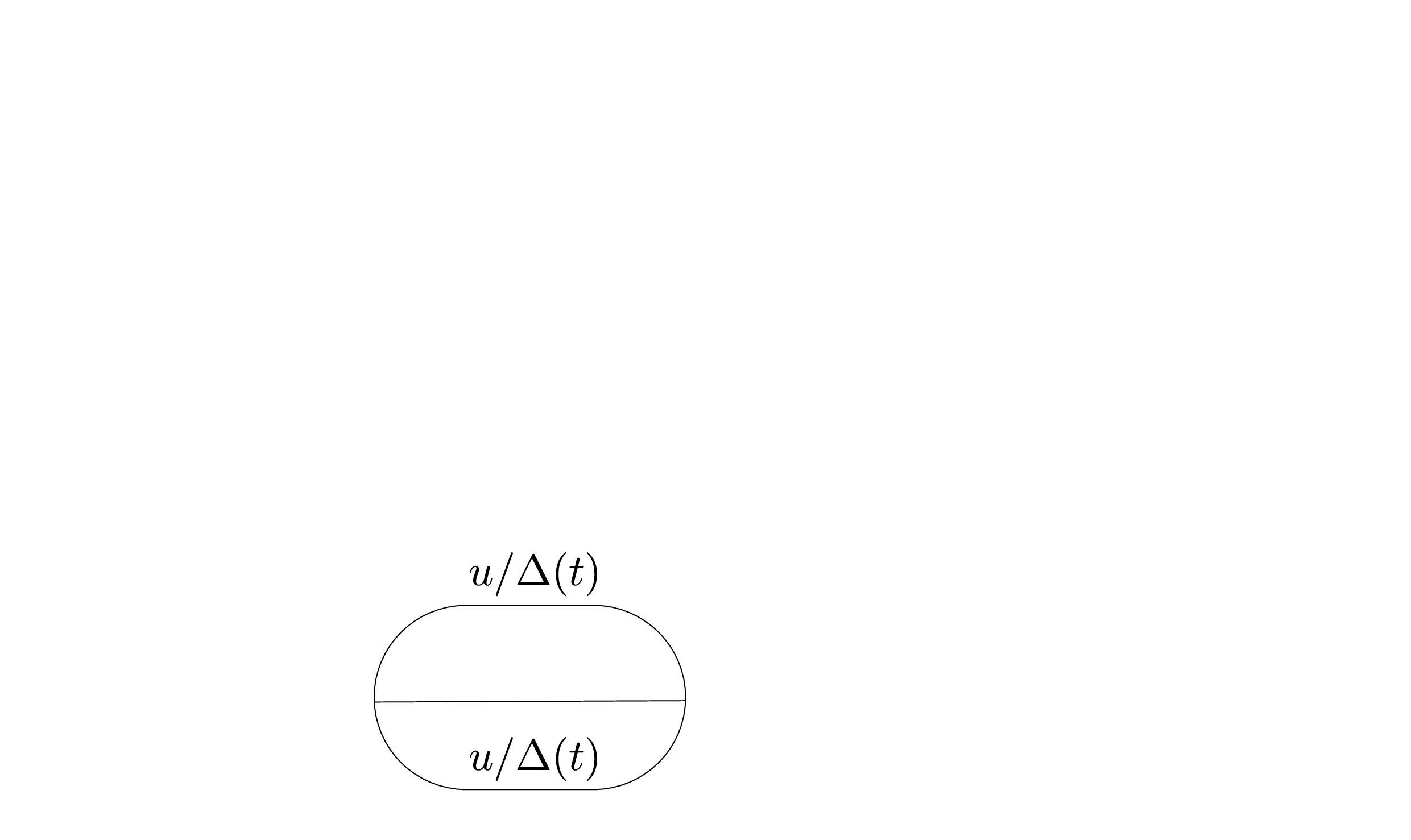}
   \end{array}$}},
\end{align*}
where $u=t+t^{-1}-2$, $v=t-t^{-1}$, and $t=e^h$.
\end{cor}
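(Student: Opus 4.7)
By Theorem \ref{t21d}, since $\Delta(t) = 1 + a(t+t^{-1}-2)$ with $a \in \mathbb{Z}$, we have $Z^{(2)}(K) \in \mathcal{V}(2,1,\Delta(t)) = \text{span}_{\mathbb{Q}}(\theta_1,\theta_2)$, so there exist unique rationals $\alpha_1, \alpha_2$ (depending on $K$) with
\begin{align*}
Z^{(2)}(K) = \alpha_1\, \theta_1 + \alpha_2\, \theta_2.
\end{align*}
The plan is to determine $\alpha_1, \alpha_2$ by comparing the two lowest-degree terms of both sides, and then to expand $\alpha_1 \theta_1 + \alpha_2 \theta_2$ into the three-diagram pictorial form in the statement.

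I would first expand the scalar labels:
\begin{align*}
\frac{e^h+e^{-h}-2}{\Delta(e^h)} &= h^2 + \left(\tfrac{1}{12}-a\right)h^4 + O(h^6), \\
\frac{e^h-e^{-h}}{\Delta(e^h)} &= 2h + \left(\tfrac{1}{3}-2a\right)h^3 + O(h^5).
\end{align*}
Under the labeling convention of Section \ref{sec4} (each monomial $h^k$ on an edge-side inserts $k$ univalent vertices at that edge), these expansions convert $\theta_1$ and $\theta_2$ into infinite sums of ordinary $2$-loop open Jacobi diagrams, graded by total degree. I would then compute explicit polynomials $p_{ij}(a)\in\mathbb{Q}[a]$ with
\begin{align*}
\theta_i = p_{i,1}(a)\, D_1 + p_{i,2}(a)\, D_2 + (\text{higher-degree terms}), \qquad i = 1, 2,
\end{align*}
where $D_1, D_2$ denote the degree-$3$ and degree-$5$ diagrams to which $b_1$ and $b_2$ are attached in the statement. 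Matching coefficients against $Z^{(2)}(K) = b_1 D_1 + b_2 D_2 + \cdots$ yields the system
\begin{align*}
\alpha_1 p_{1,1}(a) + \alpha_2 p_{2,1}(a) &= b_1, \\
\alpha_1 p_{1,2}(a) + \alpha_2 p_{2,2}(a) &= b_2,
\end{align*}
and inverting this $2 \times 2$ system expresses $\alpha_1, \alpha_2$ as explicit linear combinations of $b_1, b_2$ with denominators polynomial in $a$.

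Finally, I would substitute the resulting $\alpha_1, \alpha_2$ back into $\alpha_1 \theta_1 + \alpha_2 \theta_2$ and rewrite each $\theta_i$ using the pictorial $(u,v)$-labels of the three displayed diagrams in the statement rather than the $h$-series labels from Theorem \ref{t21d}; the coefficient of the third diagram arises from the factor $\tfrac{4a}{3}-\tfrac{1}{3}$ that multiplies the third piece of $\theta_2$. The main obstacle I anticipate is the diagrammatic expansion in the middle step: one must carefully apply the edge-labeling convention together with the AS and IHX relations to reduce the small uni-trivalent graphs produced by the expansion to scalar multiples of the basic diagrams $D_1$ and $D_2$, and to verify that $(p_{ij}(a))$ is invertible for integer $a$, a fact consistent with the denominators $16a-4$ and $32a-8$ appearing in the statement (both nonzero for $a\in\mathbb{Z}$).
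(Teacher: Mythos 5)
Your proposal follows the same route as the paper: write $Z^{(2)}(K)=\alpha_1\theta_1+\alpha_2\theta_2$ via Theorem \ref{t21d}, expand $\theta_1,\theta_2$ to the two lowest-degree diagrams, invert the resulting $2\times 2$ system (whose determinant $\tfrac{16}{3}a-\tfrac{4}{3}$ is nonzero for $a\in\mathbb{Z}$) to express $\alpha_1,\alpha_2$ in terms of $b_1,b_2$, and substitute back. The expansion coefficients you leave as "to be computed" are exactly those already obtained in the paper's proof of Theorem \ref{t21d}, so the argument is correct and essentially identical to the paper's.
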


For other simple cases, we give some upper bounds.

\begin{thm}
\label{thm6}
For any integer $g\geq 1$ and $\Delta(t)\in\mathcal{Z}$, we have
\begin{align*}
\mathcal{V}(2,g,\Delta(t))\subset\text{span}_{\mathbb{Q}}\{\Theta_m^n\mid m,n\in\mathbb{Z}, n\geq 1, 0\leq 2m\leq n\leq 2g\}.
\end{align*}
Here
\begin{align*}
\Theta_m^n={\mbox{$\begin{array}{c}
   \includegraphics[scale=0.25]{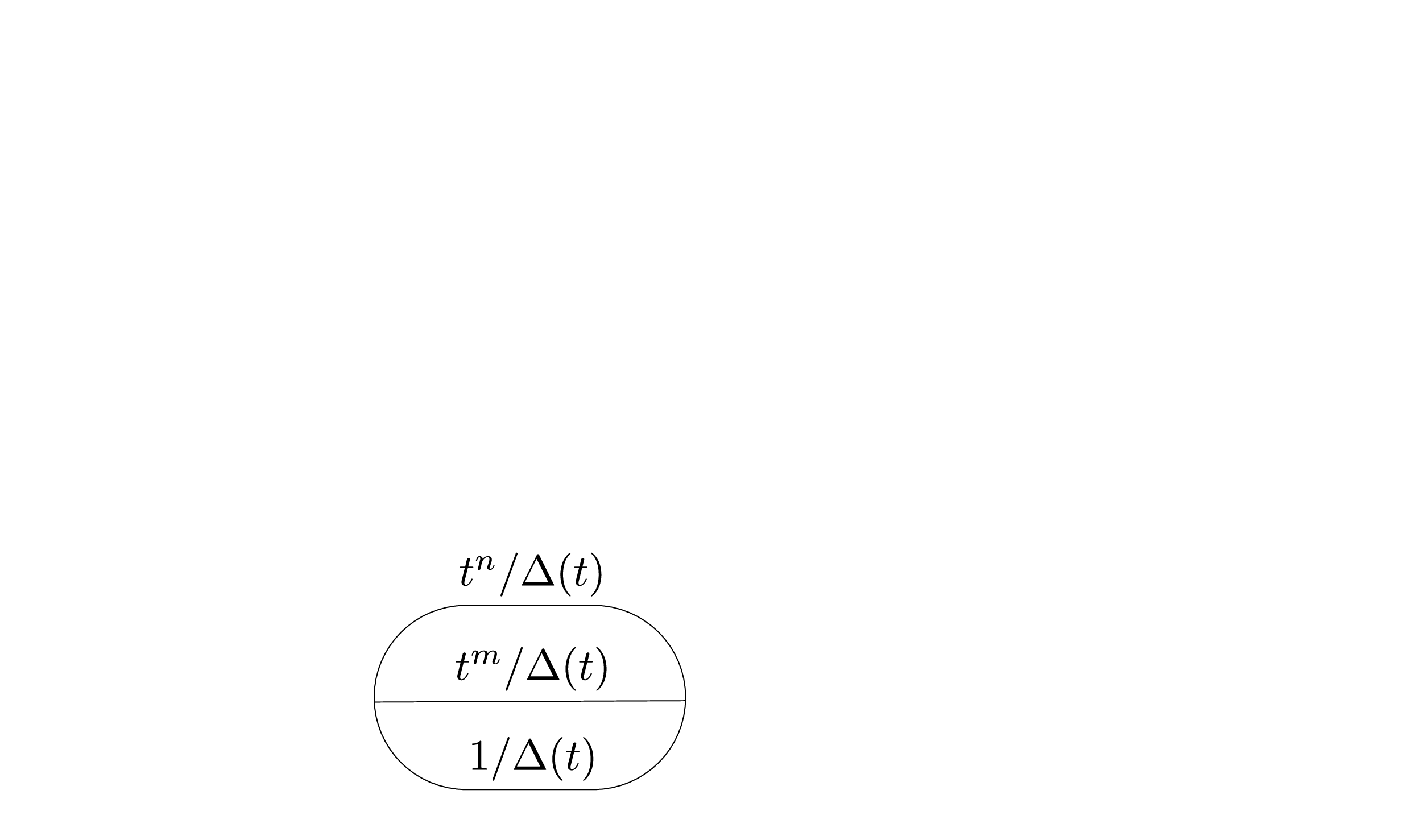}
   \end{array}$}}-{\mbox{$\begin{array}{c}
   \includegraphics[scale=0.25]{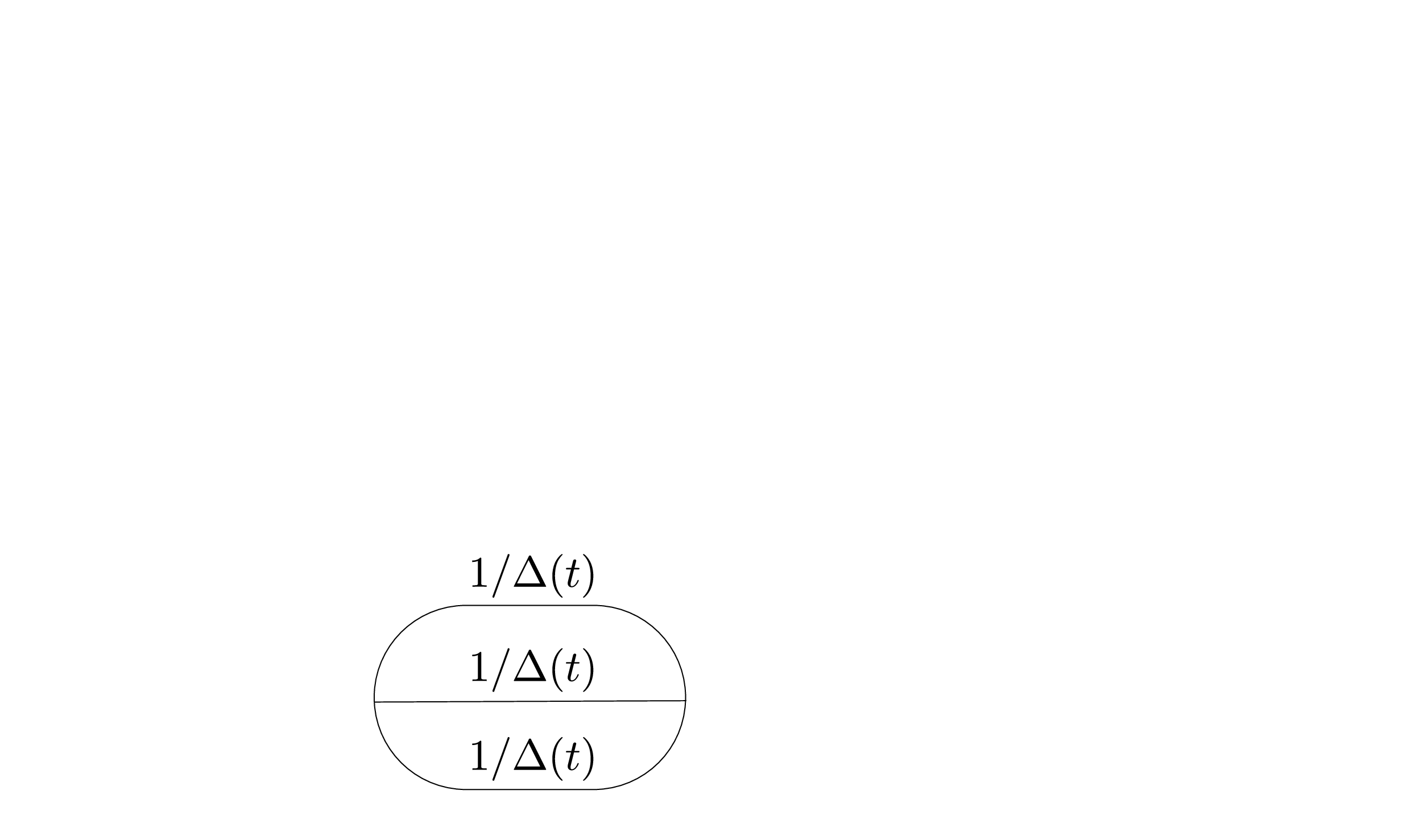}
   \end{array}$}},\quad\text{and  $t=e^h$}.
\end{align*}
In particular, we have $d(2,g,\Delta(t))\leq g^2+2g$.
\end{thm}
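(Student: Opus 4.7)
The plan is to apply the rational Aarhus integral reviewed in Section \ref{sec4} to each knot $K \in \mathcal{K}_{\leq g}^{\Delta(t)}$. By the loop expansion of the Kontsevich invariant due to Garoufalidis--Kricker \cite{Ga1, Kri1}, the 2-loop part $Z^{(2)}(K)$ admits a presentation as a sum of theta-shaped 2-loop open Jacobi diagrams whose three edges carry labels given by power series in $h$, obtained by expanding rational functions in $t = e^h$ of the form $p(t)/\Delta(t)$ with $p(t) \in \mathbb{Q}[t^{\pm 1}]$. The genus bound of Ohtsuki (Theorem 3.1 of \cite{Oh4}) then forces the numerator polynomials to have $t$-breadth at most $2g$.

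Next, I would decompose each label in the basis
\begin{align*}
\{\,u^m v^{n-2m} : n \geq 0,\ 0 \leq 2m \leq n\,\},\qquad u = t+t^{-1}-2,\ v = t-t^{-1},
\end{align*}
of $\mathbb{Q}[t^{\pm 1}]$; the breadth bound then forces $n \leq 2g$, and here $u$ is symmetric while $v$ is anti-symmetric under $t \leftrightarrow t^{-1}$. Using the $S_3$-symmetry of the theta graph, the involution $t \leftrightarrow t^{-1}$ coming from the AS relation applied to a labeled edge, and the IHX relation, I would then show that every labeled theta whose overall label is proportional to $u^m v^{n-2m}/\Delta(e^h)$ reduces, modulo these relations, to the difference $\Theta_m^n$ pictured in the statement. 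This identifies $\{\Theta_m^n : n \geq 1,\ 0 \leq 2m \leq n \leq 2g\}$ as a spanning set of $\mathcal{V}(2, g, \Delta(t))$, and counting pairs $(m, n)$ yields $\sum_{n=1}^{2g}(\lfloor n/2 \rfloor + 1) = g^2 + 2g$, which is the stated upper bound on $d(2,g,\Delta(t))$.

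The main obstacle is the reduction of an arbitrary labeled theta to the specific differences $\Theta_m^n$. Moving a label between edges via IHX produces correction terms involving factors of $\Delta(e^h)$ in the denominator, and one must check that every such correction can be rewritten within the proposed spanning set rather than producing new independent generators. The $\Theta_m^n$ are phrased as differences of two diagrams precisely because that difference form is the local identity needed to absorb a common label when it is redistributed between two distinct edges of the theta, so this obstacle really amounts to verifying that no further independent relations among labeled thetas of breadth $\leq 2g$ arise beyond those already encoded in the $\Theta_m^n$.
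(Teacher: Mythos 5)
Your proposal rests on the same key input as the paper's proof --- Ohtsuki's genus bound on the $t_1$-degree of the 2-loop polynomial --- but two points need attention. First, the citation: the bound you need for arbitrary genus $g$ is Theorem~4.7 of \cite{Oh4}, not Theorem~3.1 (the latter is the genus-1 statement used in the proof of Theorem~\ref{t21d}). Second, and more substantially, the step you flag as the ``main obstacle'' --- reducing an arbitrary labeled theta diagram, modulo the AS/IHX relations, the $S_3$-symmetry of the theta graph, and the involution $t\leftrightarrow t^{-1}$, to the specific differences $\Theta_m^n$ --- is not something you need to re-derive: it is exactly the content of Section~1 of \cite{Oh4}, where the 2-loop part of $Z$ is shown to be presented by a single symmetric polynomial $\Theta_K(t_1,t_2,t_3)$ subject to $t_1t_2t_3=1$, expanded in the basis $T_{n,m}$, and the diagrams $\Theta_m^n$ are precisely the images of the $T_{n,m}$. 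The paper's proof is therefore a two-step citation: $Z^{(2)}(K)$ lies in the span of the $\Theta_m^n$ by the $T_{n,m}$-presentation of the 2-loop polynomial, and the constraint $n\leq 2g$ comes from Theorem~4.7 of \cite{Oh4}; the count $\sum_{m=0}^{g}(2g-2m+1)-1=g^2+2g$ (which agrees with your $\sum_{n=1}^{2g}(\lfloor n/2\rfloor+1)$) finishes the argument. As written, your proposal leaves its central reduction unproved, and carrying it out from scratch via the Aarhus integral would amount to reproving that part of \cite{Oh4}; with the citation supplied in place of the re-derivation, your argument collapses onto the paper's.
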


Theorem \ref{thm6} can be regarded as a corollary of Theorem 4.7 of \cite{Oh4}.

\begin{rem}
The equality in Theorem \ref{thm6} does not hold even $g=1$ case. Hence we hope to obtain better evaluations or calculations.
\end{rem}

\begin{thm}\cite{Ya3}
\label{Ya3}
We have
\begin{align*}
\mathcal{V}(3,1,1)\subset\text{span}_{\mathbb{Q}}X.
\end{align*}
Here
\begin{align*}
X=\{&{\mbox{$\begin{array}{c}
   \includegraphics[scale=0.25]{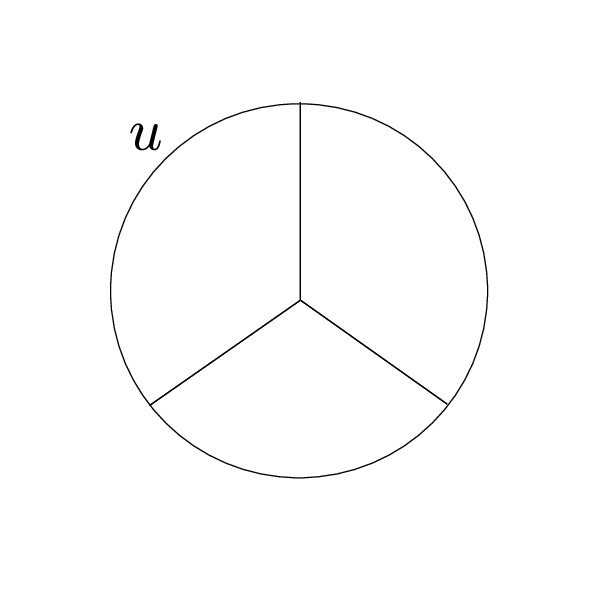}
   \end{array}$}},{\mbox{$\begin{array}{c}
   \includegraphics[scale=0.25]{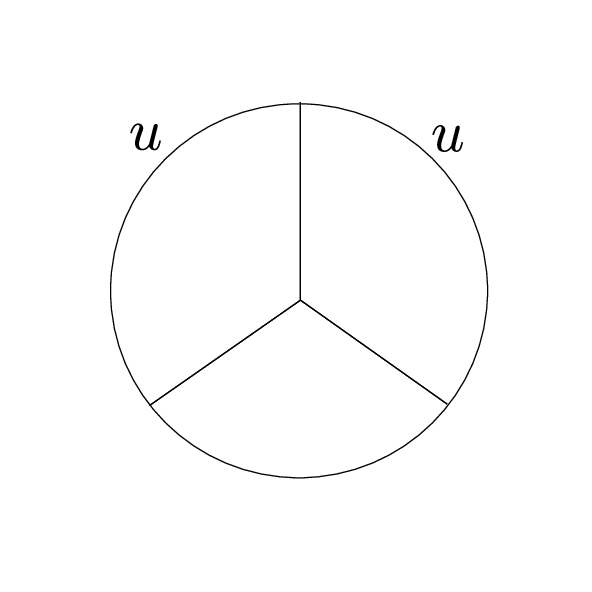}
   \end{array}$}},{\mbox{$\begin{array}{c}
   \includegraphics[scale=0.25]{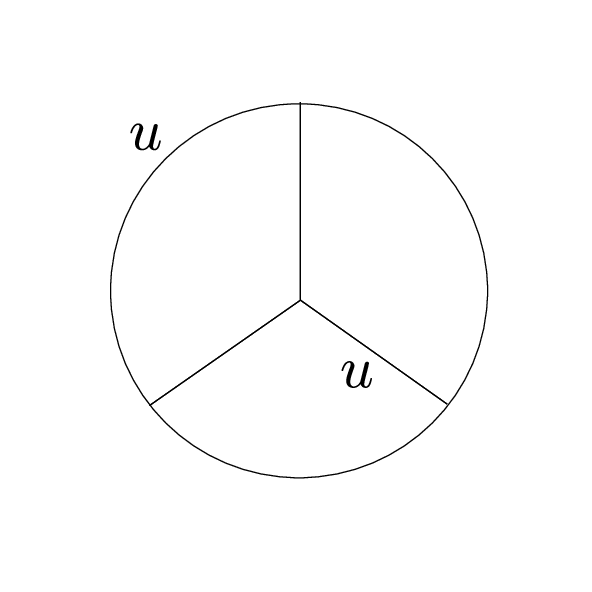}
   \end{array}$}},{\mbox{$\begin{array}{c}
   \includegraphics[scale=0.25]{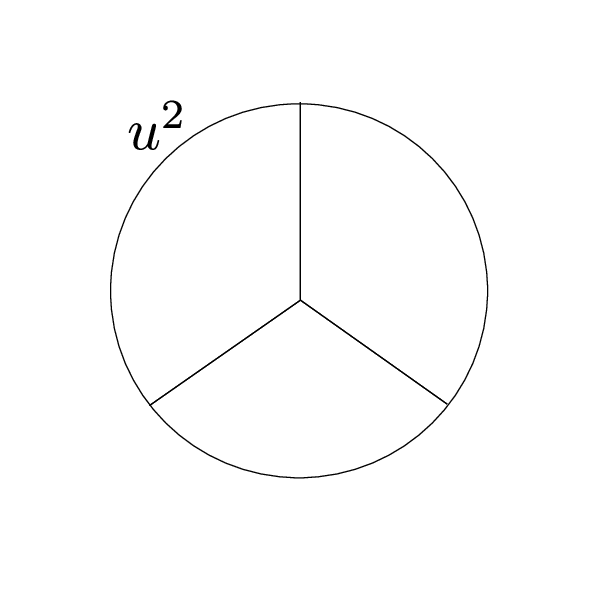}
   \end{array}$}},{\mbox{$\begin{array}{c}
   \includegraphics[scale=0.25]{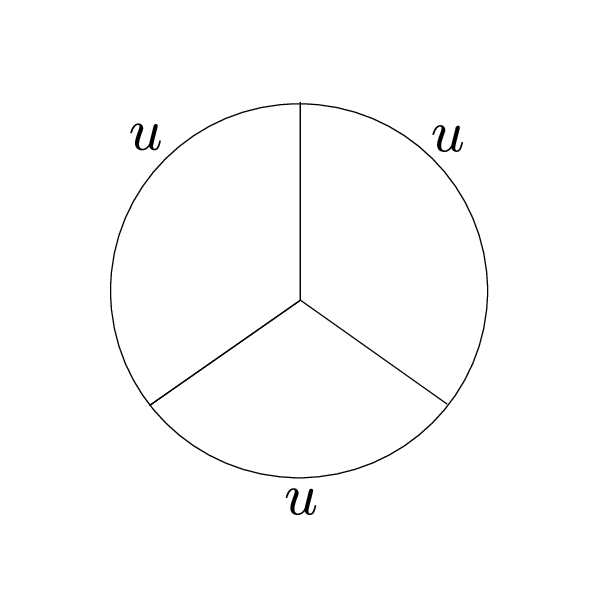}
   \end{array}$}},{\mbox{$\begin{array}{c}
   \includegraphics[scale=0.25]{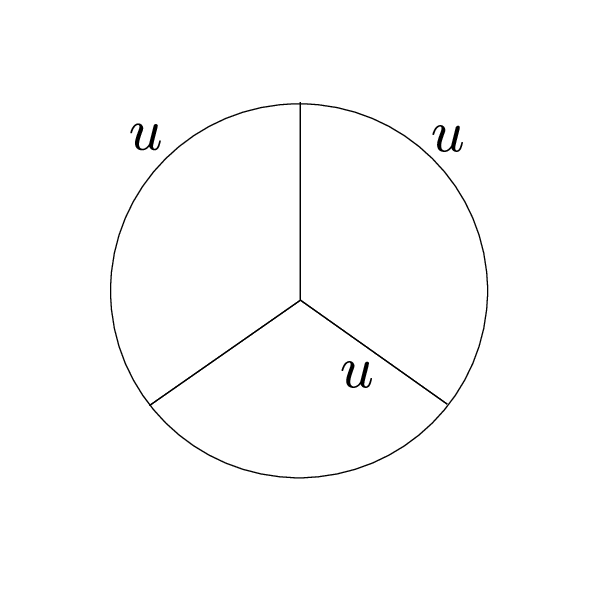}
   \end{array}$}},{\mbox{$\begin{array}{c}
   \includegraphics[scale=0.25]{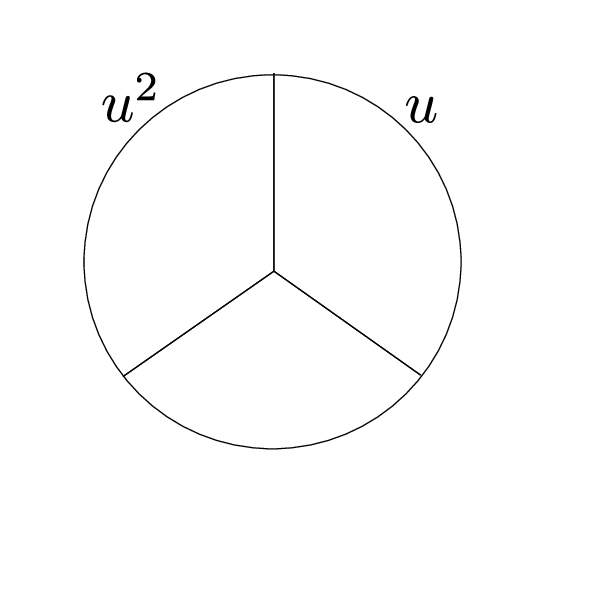}
   \end{array}$}},\\
&{\mbox{$\begin{array}{c}
   \includegraphics[scale=0.25]{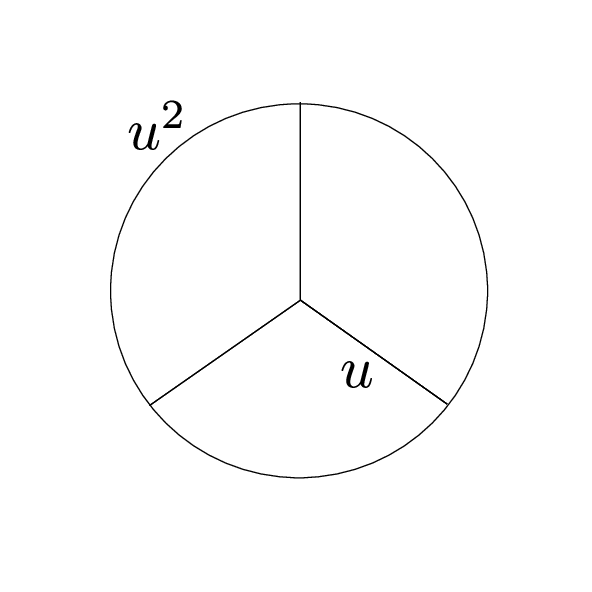}
   \end{array}$}},{\mbox{$\begin{array}{c}
   \includegraphics[scale=0.25]{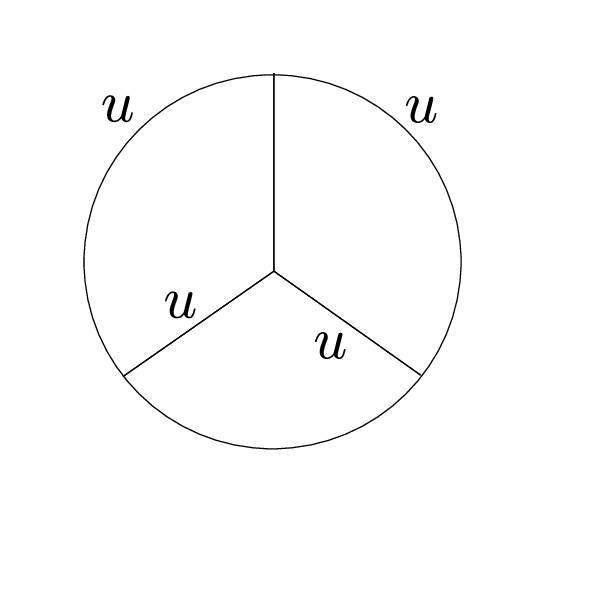}
   \end{array}$}},{\mbox{$\begin{array}{c}
   \includegraphics[scale=0.25]{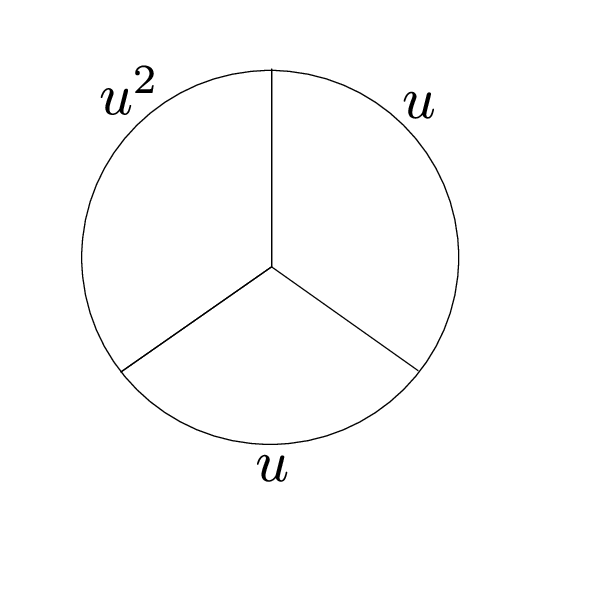}
   \end{array}$}},{\mbox{$\begin{array}{c}
   \includegraphics[scale=0.25]{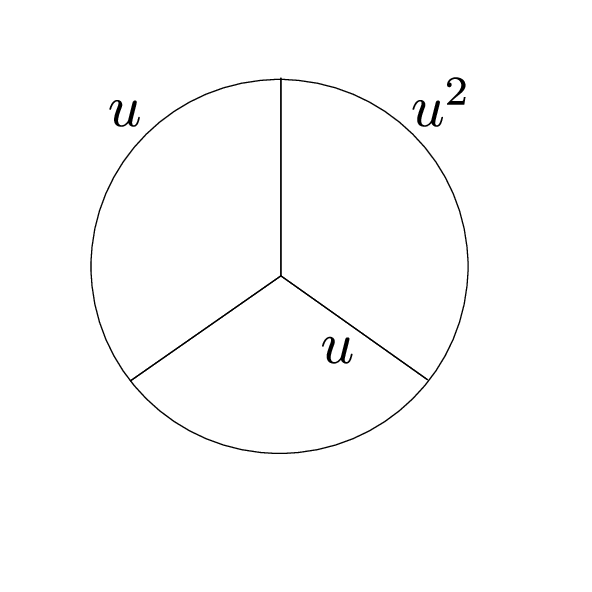}
   \end{array}$}}\},
\end{align*}
where $u=t+t^{-1}-2$ and $t=e^h$. In particular, we have $d(3,1,1)\leq 11$.
\end{thm}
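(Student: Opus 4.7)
The plan is to apply the strategy underlying Theorem \ref{thm1}, specialized to $n=3$, $g=1$, $\Delta(t)=1$. For any such knot $K$, one produces a clasper-surgery presentation of $K$ from the unknot along a genus $1$ Seifert surface; the rational version of the Aarhus integral then yields a rational-form presentation of $\chi^{-1}Z(K)$. Restricting to the 3-loop part, $Z^{(3)}(K)$ becomes a finite $\mathbb{Q}$-linear combination of connected trivalent 3-loop open Jacobi diagrams whose edges are decorated by rational functions of $t$ with denominators that are powers of $\Delta_K(t)=1$; hence, after substituting $t=e^h$, the edge labels are power series in $h$ with no denominators.

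Next I would enumerate the finitely many topological types of connected trivalent 3-loop open Jacobi diagrams. For each such type, the possible decorations produced by the Aarhus integral for genus 1 knots are controlled by the entries and the inverse of the equivariant linking matrix, which for $g=1$ is a single $2\times 2$ matrix. Consequently, each edge label is built from a very small family of ``elementary'' power series in $h$, and the trivial-Alexander condition imposes the normalization $\det = 1$ on this matrix, which in turn yields identities among these elementary series. Applying the AS and IHX relations to every suitable edge triple, together with these matrix identities, one collapses the a priori long list of labeled diagrams down to the eleven generators displayed in $X$, establishing the inclusion $\mathcal{V}(3,1,1)\subset \textnormal{span}_\mathbb{Q} X$ and hence $d(3,1,1)\leq 11$.

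The main obstacle is the combinatorial bookkeeping. Before any simplification there are many topological 3-loop skeletons and many ways to label each edge, so one must systematically track automorphisms of each graph, perform IHX moves on every admissible edge triple, and carefully exploit the $\det=1$ identities to identify redundant decorations. There is also the subsidiary task of verifying that no unused relation further collapses $X$, since that would weaken the stated bound rather than improve it. Because this case analysis is the computational content of the companion paper \cite{Ya3}, I would defer the full verification to that reference and present here only the conceptual reduction via the Aarhus integral and the AS/IHX moves.
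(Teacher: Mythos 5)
The paper itself gives no proof of this theorem: it is imported wholesale from \cite{Ya3}, so there is no in-paper argument to compare yours against. Your outline is methodologically consistent with how the paper proves its other results (surgery presentation built from a genus-one Seifert surface, the rational version of the Aarhus integral, and reduction of edge decorations via the AS and IHX relations), and your first step --- that $Z^{(3)}(K)$ lands in a finite-dimensional space of $3$-loop diagrams with polynomial edge labels because $\Delta_K(t)=1$ --- is exactly the specialization to $n=3$, $g=1$ of Theorem \ref{thm1}, i.e.\ membership in the space $E_0(3,3,1)$ appearing in its proof.

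However, the entire quantitative content of the theorem is the identification of the particular eleven-element set $X$, and your proposal does not establish it: the reduction from the a priori much larger spanning set of labeled $3$-loop diagrams down to these eleven specific elements is precisely the computation you defer to \cite{Ya3}. As written, your argument only re-proves that $\mathcal{V}(3,1,1)$ sits inside some explicit finite-dimensional space, which is weaker than the stated inclusion $\mathcal{V}(3,1,1)\subset\text{span}_{\mathbb{Q}}X$. Two smaller points. First, for $g=1$ the equivariant linking matrix in the paper's surgery presentation is $4\times 4$, not $2\times 2$ (the $2\times 2$ object is the Seifert matrix), and the condition $\Delta(t)=1$ constrains its determinant only up to a unit in $\mathbb{Z}[t^{\pm 1}]$ rather than forcing it to equal $1$. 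Second, your ``subsidiary task'' of verifying that no unused relation further collapses $X$ is not needed for the stated conclusion: further collapse would only strengthen the upper bound $d(3,1,1)\leq 11$, so that check is relevant only to sharpness, which the theorem does not claim.
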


\begin{rem}
If we can determine $l:=\text{max}\{\text{degree of }v_{j_k}\mid k=1,\cdots,d\}$, where $\{v_{j_1},\cdots,v_{j_d}\}$ is given in Corollary \ref{corvv}, we can obtain the lower bound of the genus of knots with trivial $Z^{(k)}$ for all $1\leq k\leq n-1$ and trivial degree $\leq l$ parts of $Z^{(n)}$. Note that we can obtain such knots by, for example, clasper surgeries, see Section \ref{sec4} for clasper surgeries.
\end{rem}

\section{A review of the rational version of the Aarhus integral and clasper surgeries}
\label{sec4}

\subsection{A review of the rational version of the Aarhus integral}

In this section, we briefly review the loop expansion of the Kontsevich invariant of knots and the rational version of Aarhus integral. For details, see \cite{BGRT1,BGRT2,BGRT3,Ga1,Kri1}\par
We obtain the loop expansion of the Kontsevich invariant of a knot $K$, as follows. We take its surgery presentation $K_0\cup L$ such that $K_0$ is the 0-framed unknot and $L$ is a $l$-components framed link, and the linking number of $K_0$ and each component of $L$ is equal to 0. This means that the pair $(S^3,K)$ is obtained from the pair $(S^3,K_0)$ by surgery along $L$. Let $X$ be a finite set with $|X|=l$. First, we compute $Z(K_0\cup L)\in\widetilde{\mathcal{A}(\bigsqcup_{\{h\}\cup X}S^1)}$. Here $\bigsqcup_{\{h\}\cup X}S^1$ is the disjoint union of $l+1$ oriented circles, each of which is labeled by each of elements in $\{h\}\cup X$ respectively, and $h$ corresponds to $K_0$ and each component of $X$ corresponds to each element of $L$ respectively. Second, we compute $\check{Z}(K_0\cup L)$, which is obtained from $Z(K_0\cup L)$ by connected-summing of $\nu=Z(\text{unknot})$ to each component labeled by an element of $X$. Third, we compute $\chi_{h}^{-1}\check{Z}(K_0\cup L)\in\widetilde{\mathcal{A}(\ast_h\sqcup\bigsqcup_XS^1)}$, where $\chi_h:\mathcal{A}(\ast_h\sqcup\bigsqcup_XS^1)\rightarrow\mathcal{A}(\downarrow_h\sqcup\bigsqcup_XS^1)\cong\mathcal{A}(\bigsqcup_{\{h\}\cup X}S^1)$ is defined as the PBW isomorphism. It is known, see \cite[Theorem 4]{BLT}, \cite[Corollary 5.0.8]{Kri1}, that
\begin{align*}
\chi_h^{-1}Z\left({\mbox{$\begin{array}{c}
   \includegraphics[scale=0.18]{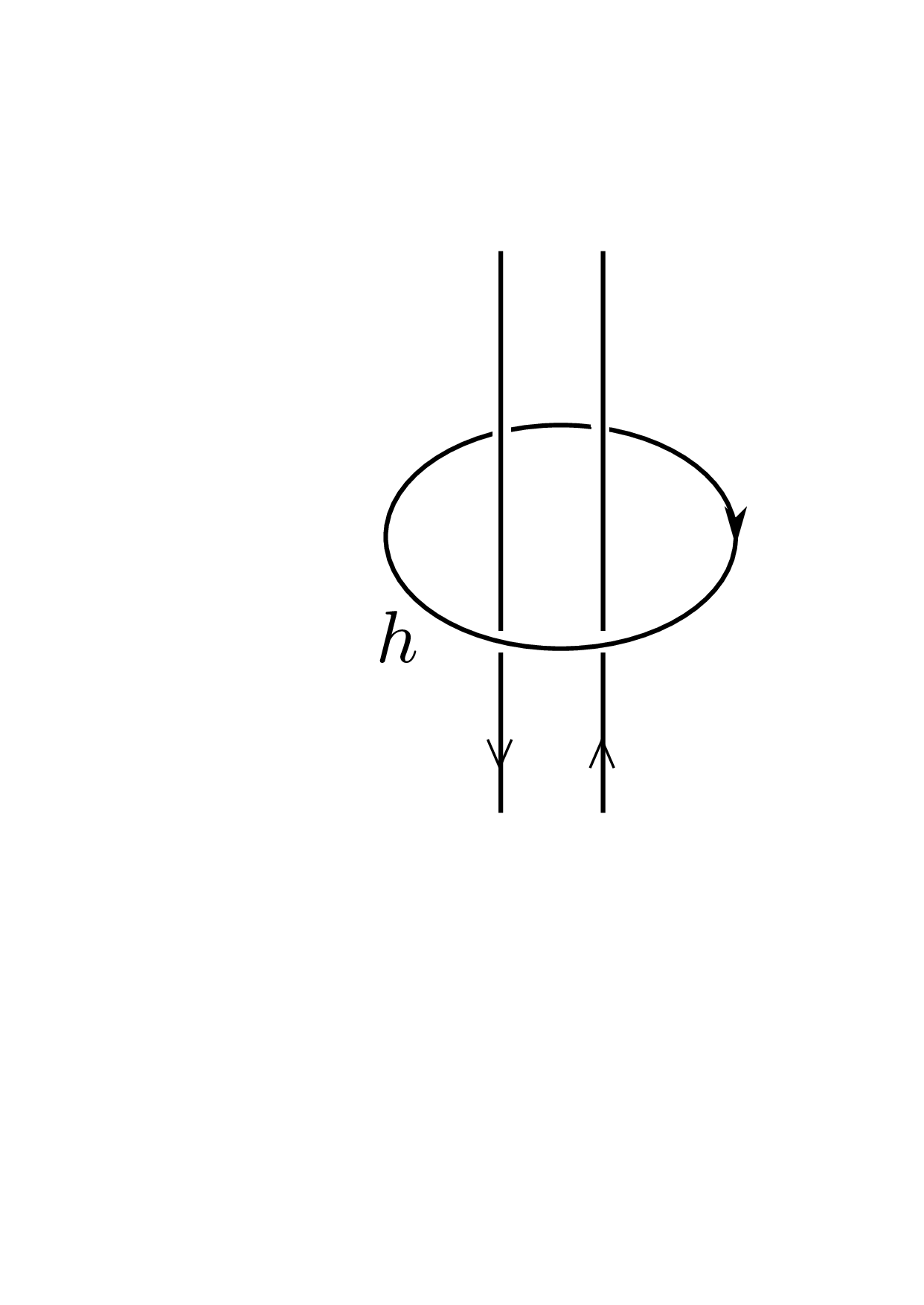}
 \end{array}$}}\right)
={\mbox{$\begin{array}{c}
   \includegraphics[scale=0.18]{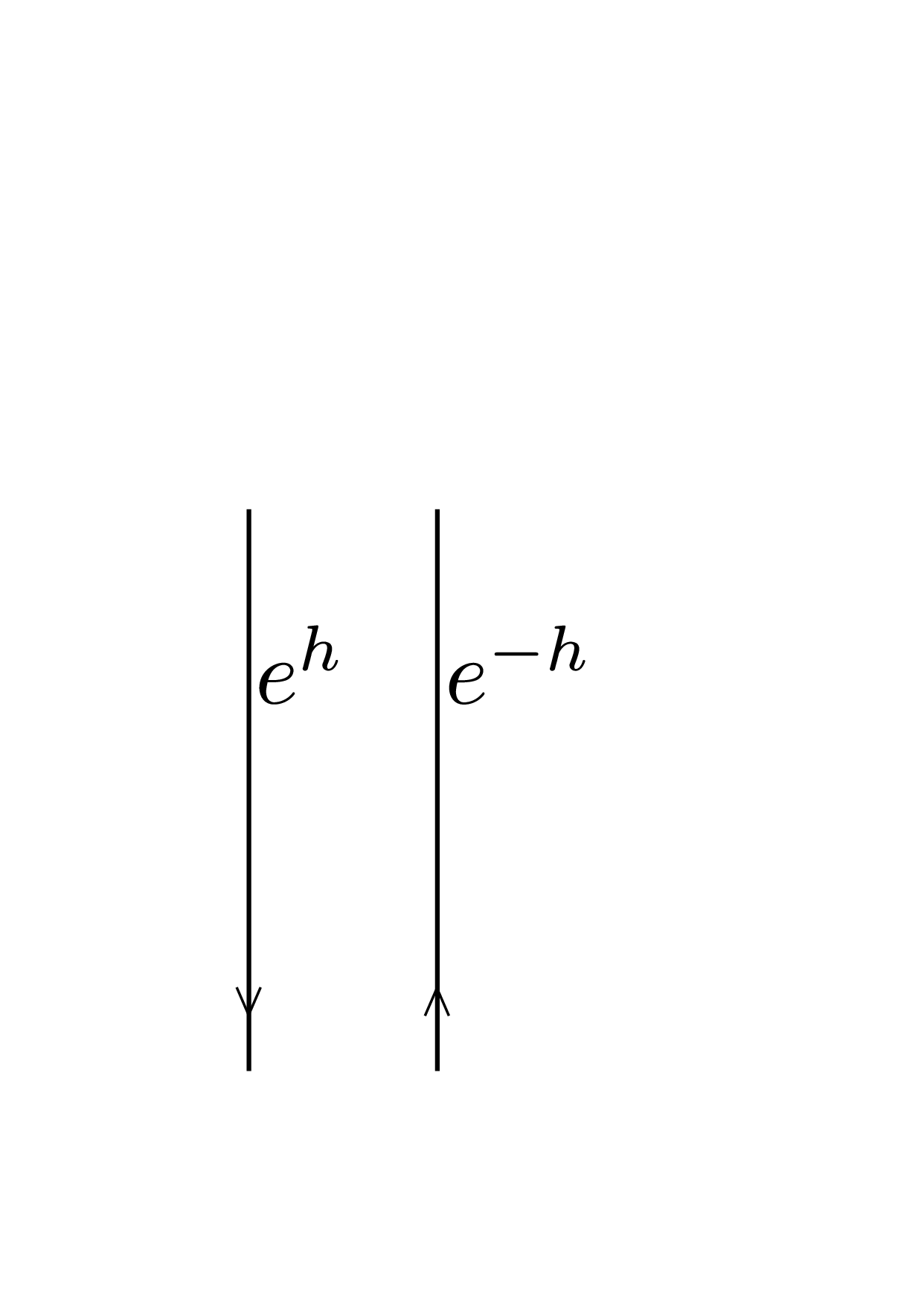}
 \end{array}$}}
\sqcup\chi^{-1}\nu.
\end{align*}
For simplicity, by putting $t=e^h$ and omitting $\chi^{-1}\nu$, we denote
\begin{align}
\label{xzt}
\chi_h^{-1}Z\left({\mbox{$\begin{array}{c}
   \includegraphics[scale=0.18]{longhopf2.ps}
 \end{array}$}}\right)
={\mbox{$\begin{array}{c}
   \includegraphics[scale=0.18]{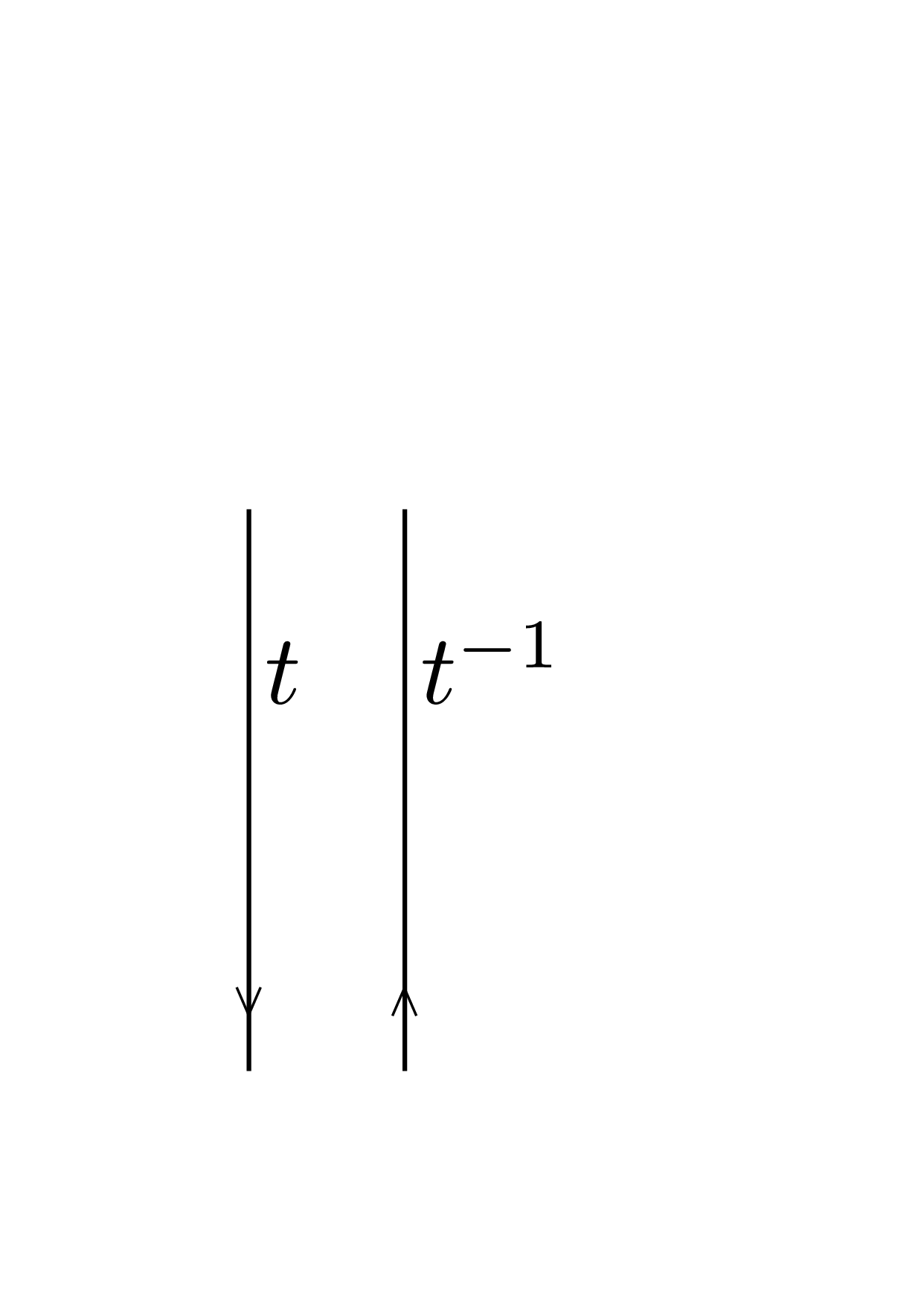}
 \end{array}$}}.
\end{align}
In the following of this paper, we use the following notation. We define a labeling on one side of an edge of a Jacobi diagram by an infinite series in $f(h)=c_0+c_1h+\cdots$ by\\
\begin{align}
\label{label}
\mbox{$\begin{array}{c}
   \includegraphics[scale=0.3]{labeling1.ps}
   \end{array}$}\in\widetilde{\mathcal{A}(\ast_h)}=\widetilde{\mathcal{B}}.
\end{align}
Thus, the labeling on one side of an edge of a Jacobi diagram by (a rational function of) $t$ means
\begin{align}
\label{tehh}
\mbox{$\begin{array}{c}
   \includegraphics[scale=0.3]{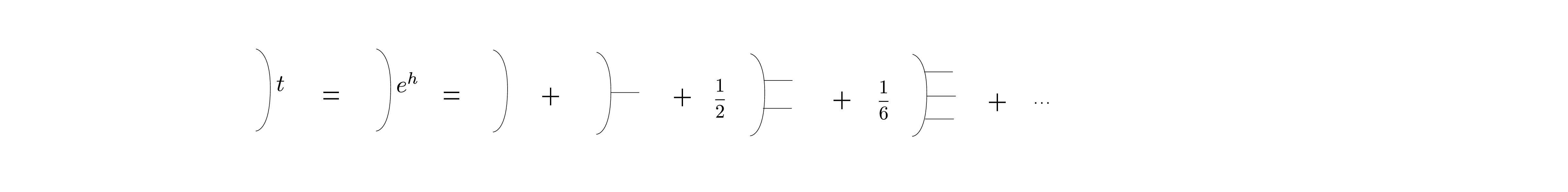}
   \end{array}$}\in\widetilde{\mathcal{A}(\ast_h)}=\widetilde{\mathcal{B}}.
\end{align}
Note that by the AS, IHX, and STU relations, we have
\begin{align}
\label{trel}
\mbox{$\begin{array}{c}
   \includegraphics[scale=0.3]{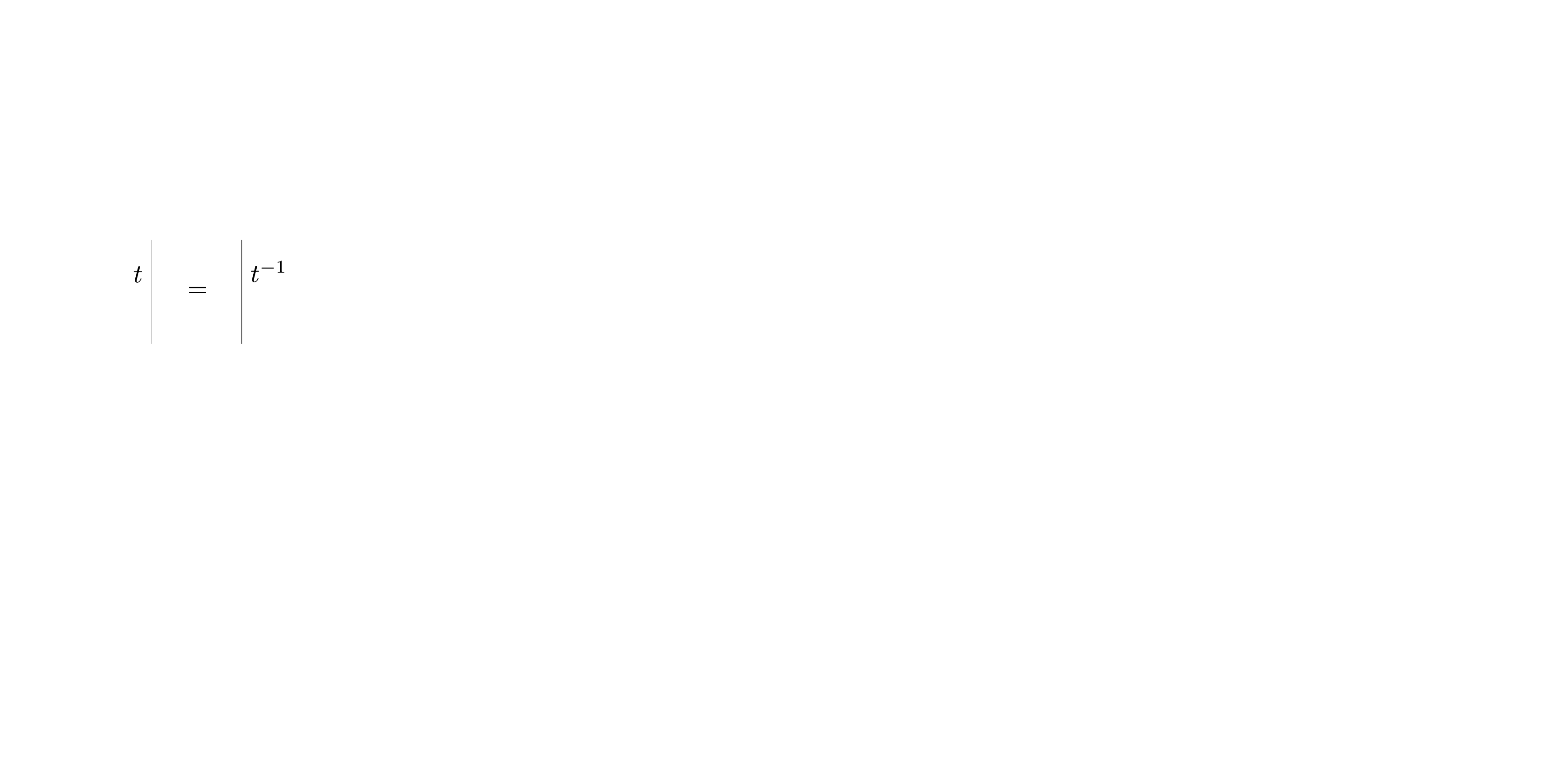}
   \end{array}$}\quad&,\qquad\mbox{$\begin{array}{c}
   \includegraphics[scale=0.3]{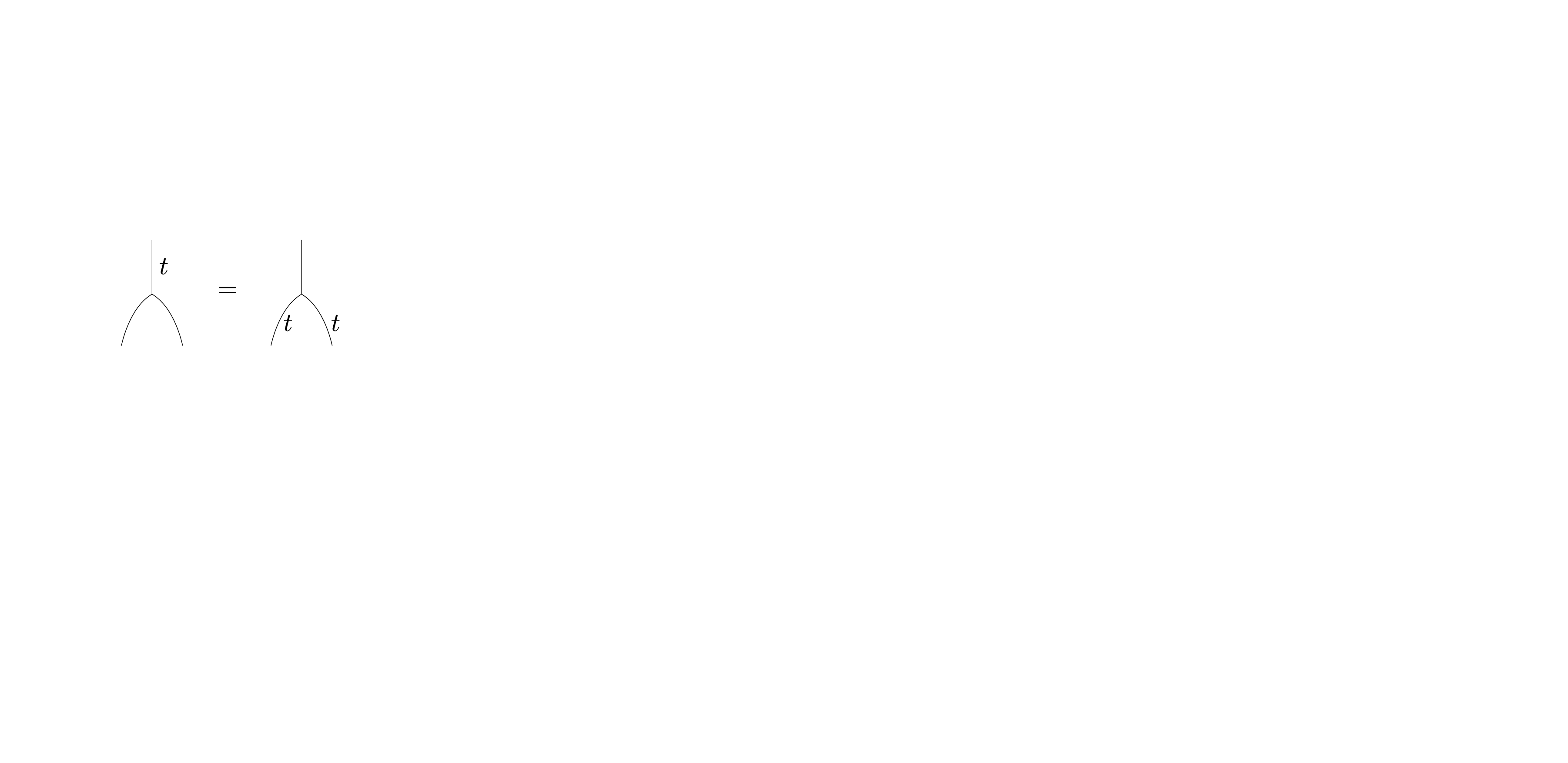}
   \end{array}$},\nonumber\\*
\mbox{$\begin{array}{c}
   \includegraphics[scale=0.3]{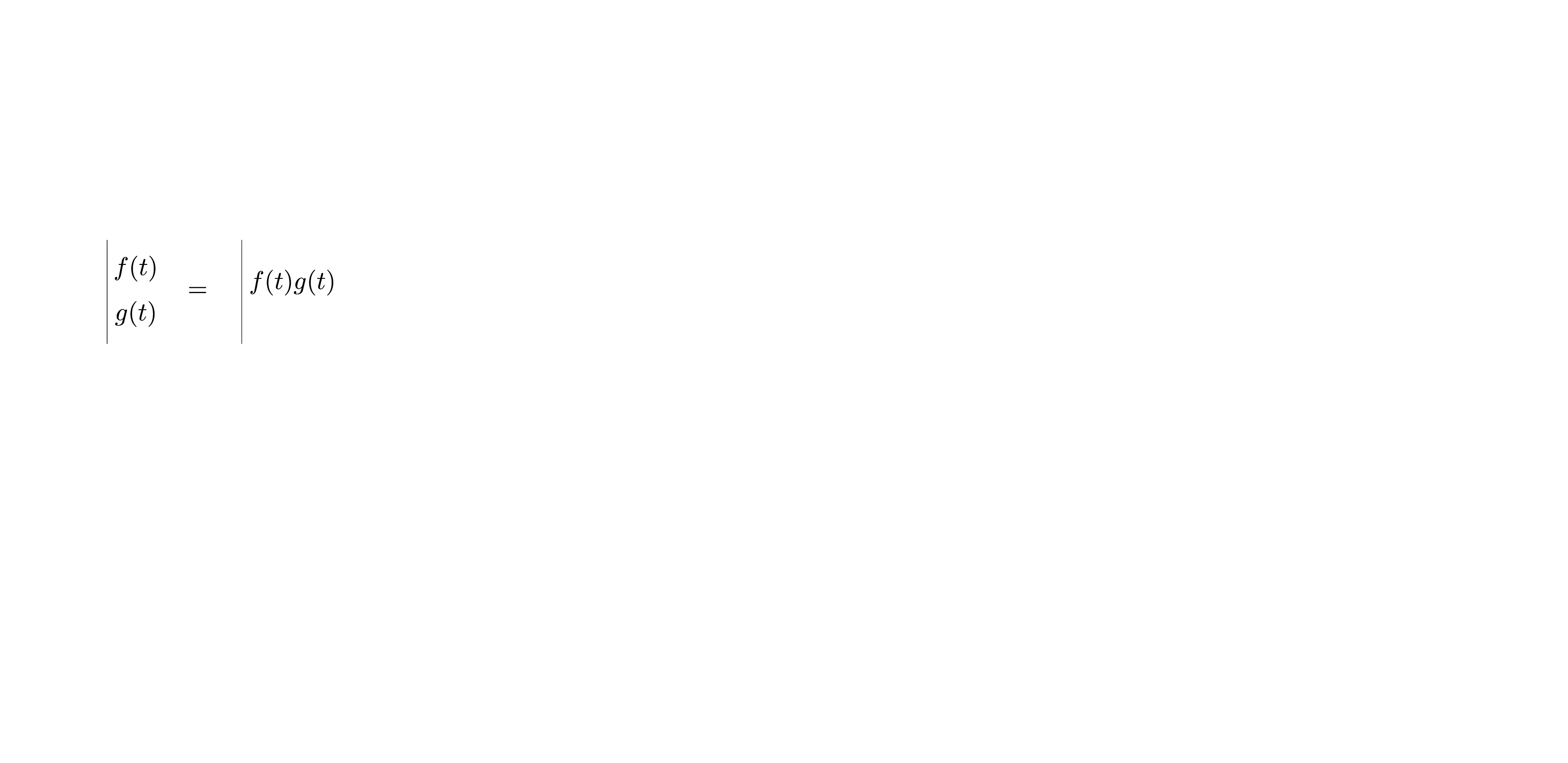}
   \end{array}$}\quad&,\qquad\mbox{$\begin{array}{c}
   \includegraphics[scale=0.3]{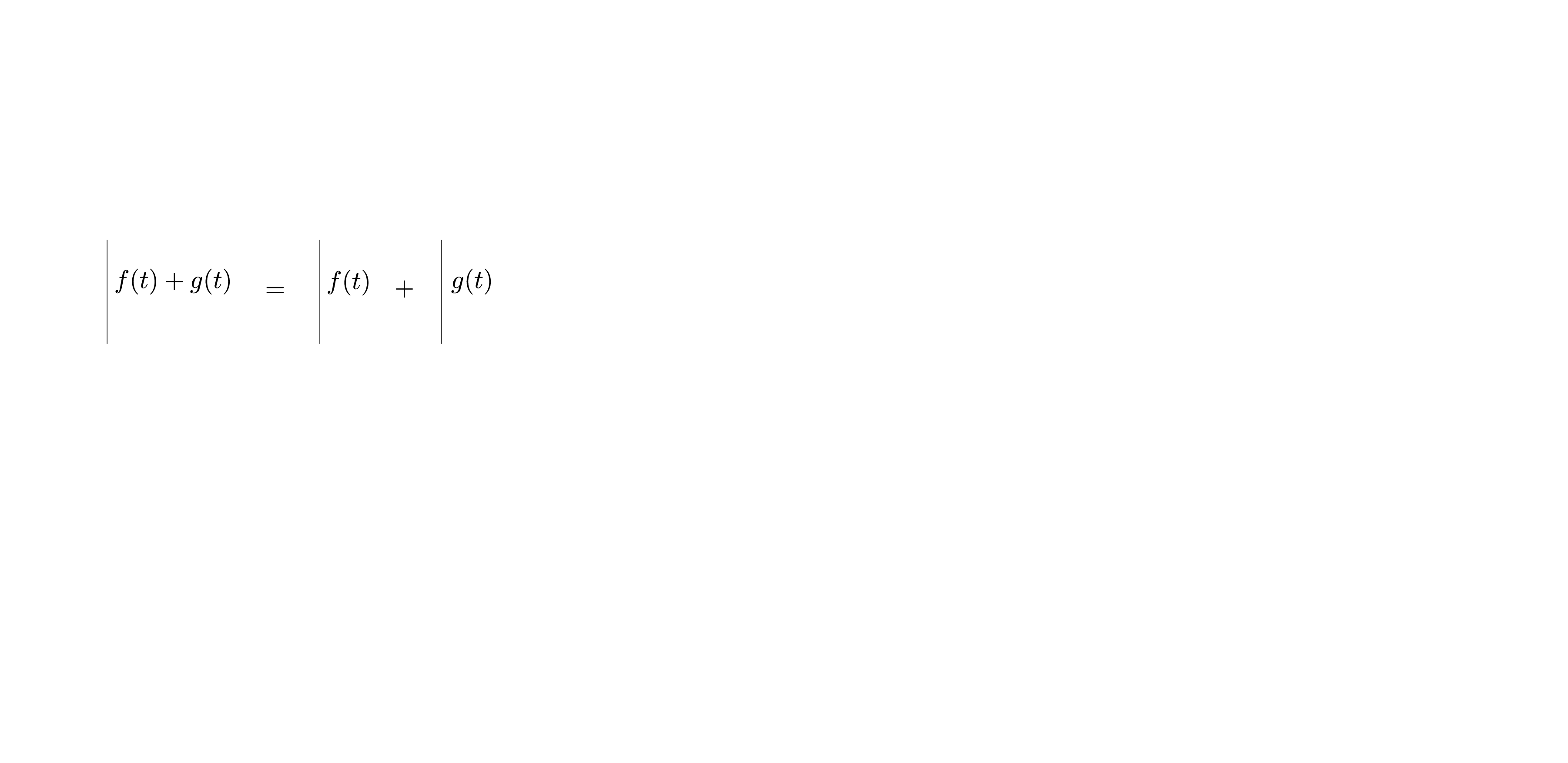}
   \end{array}$},
\end{align}
where the vertex appeared in the second relation is either a univalent vertex or a trivalent vertex. Forth, we compute $\chi^{-1}\check{Z}(K_0\cup L)\in\widetilde{\mathcal{A}(\ast_{\{h\}\cup X})}$, where $\mathcal{A}(\ast_{\{h\}\cup X})$ is the quotient space spanned by open Jacobi diagram whose univalent vertex is labeled by an element in $\{h\}\cup X$, subject to AS, IHX relations. We choose a disjoint union of the unknot and a string link $K_0\cup\check{L}$ whose closure is isotopic to $K_0\cup L$, and the map $\chi_X:\mathcal{A}(\ast_{\{h\}\cup X})\rightarrow\mathcal{A}(\ast_{h}\sqcup\bigsqcup_X\downarrow)$ is the map defined by the composition of all PBW isomorphisms for all elements in $X$. For simplicity, we denote $\chi_X^{-1}\chi_h^{-1}\check{Z}(K_0\cup L)$ by $\chi^{-1}\check{Z}(K_0\cup L)$. It is known that $\chi^{-1}\check{Z}(K_0\cup L)$ is presented by 
\begin{align*}
\chi^{-1}\check{Z}(K_0\cup L)=\exp\Big(\df{1}{2}\sum_{x_i,x_j\in X}\mbox{$\begin{array}{c}
   \includegraphics[scale=0.2]{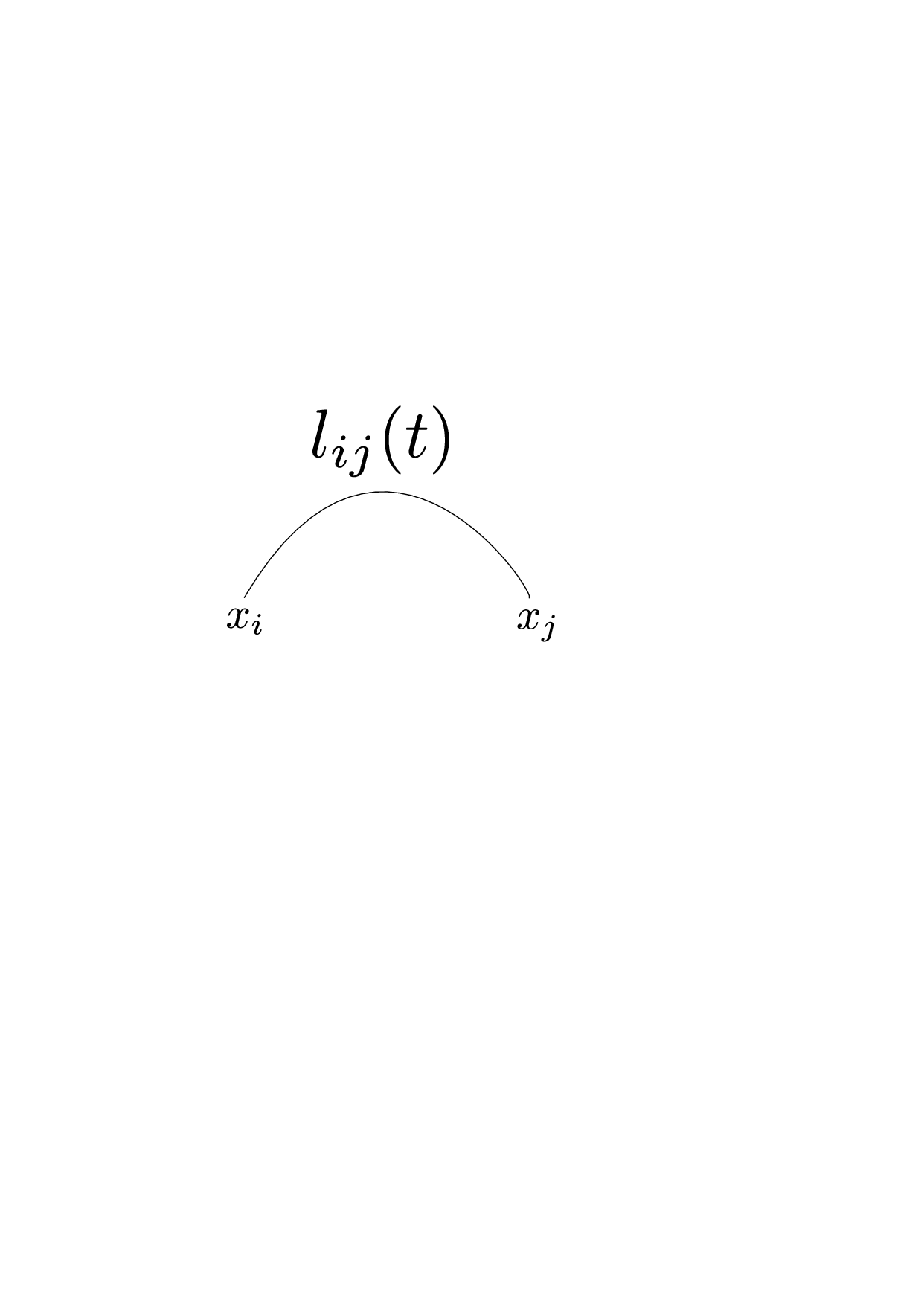}
 \end{array}$}\Big)\cup P\big(\chi^{-1}\check{Z}(K_0\cup L)\big),
\end{align*} 
where $\big(l_{ij}(t)\big)$ is an equivariant linking matrix of $L\subset S^3\backslash K_0$ which satisfies that $l_{ji}(t)=l_{ij}(t^{-1})$. Also, $P\big(\chi^{-1}\check{Z}(K_0\cup L)\big)$ is a sum of diagrams which have at least one trivalent vertex on each component, where we do not count any trivalent vertex connected to a univalent vertex labeled by $h$. For details about an equivariant linking matrix, see for example \cite{Ga0}. Then, for $n\geq 2$, the $n$-loop Kontsevich invariant of $K$ is presented as follows,
\begin{align*}
Z^{(n)}(K)=\iota_n\left(\log\left(\df{\langle\!\langle\chi^{-1}\check{Z}(K_0\cup L)\rangle\!\rangle}{\langle\!\langle\chi^{-1}\check{Z}(U_+)\rangle\!\rangle^{\sigma_+}\langle\!\langle\chi^{-1}\check{Z}(U_-)\rangle\!\rangle^{\sigma_-}}\right)\right),
\end{align*}
where $U_{\pm}$ denotes the unknot with $\pm1$ framing, and $\sigma_+$ and $\sigma_-$ are the number of the positive and negative eigenvalues of the linking matrix of $L$. 
The operation \lq\lq{$\langle\!\langle\quad\rangle\!\rangle$}" which is known as the \textit{Aarhus integral} is defined by
\begin{align*}
\langle\!\langle\chi^{-1}\check{Z}(K_0\cup L)\rangle\!\rangle=\Big\langle\exp\Big(-\df{1}{2}\sum_{x_i,x_j\in X}\mbox{$\begin{array}{c}
   \includegraphics[scale=0.2]{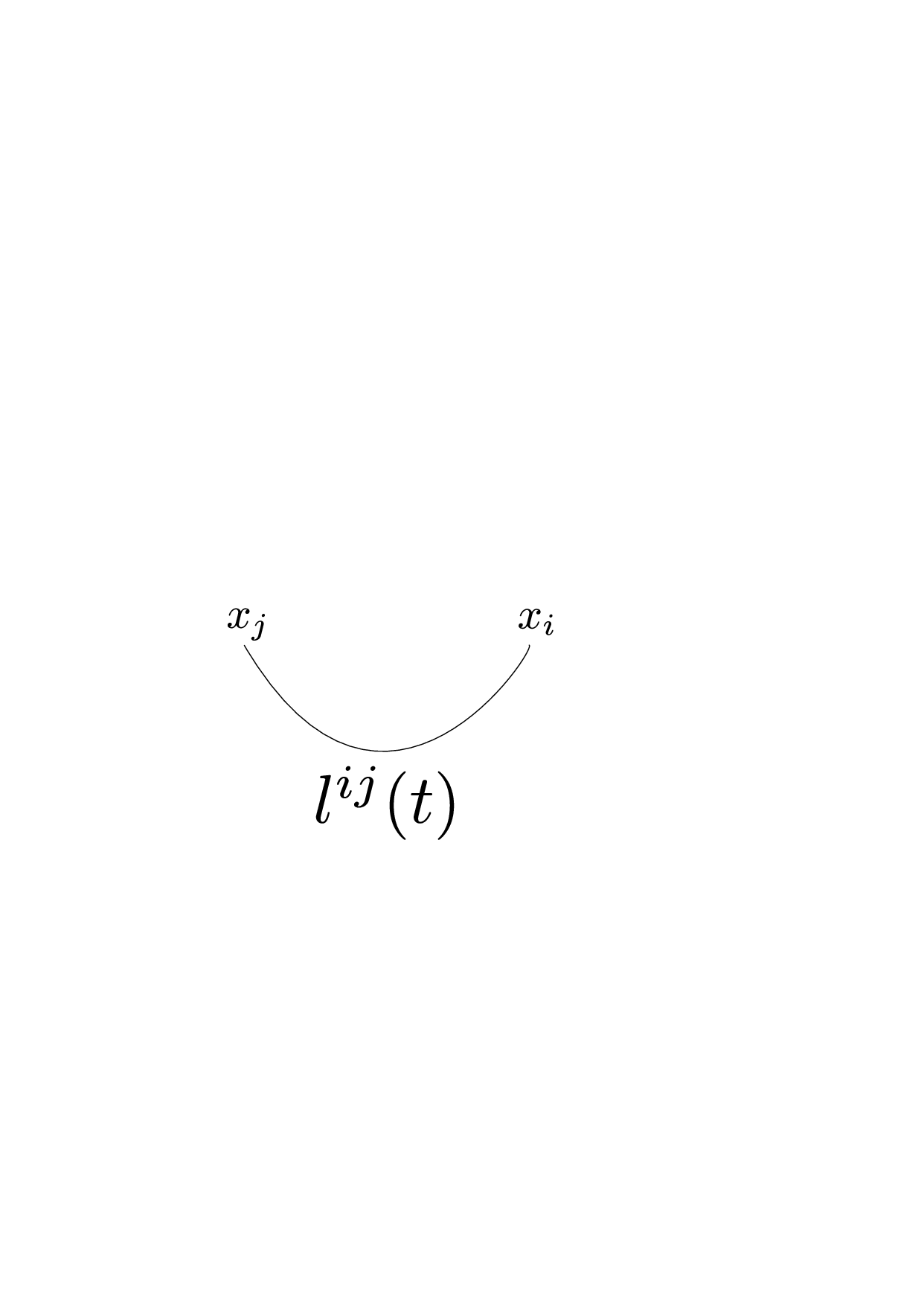}
 \end{array}$}
\Big),P\big(\chi^{-1}\check{Z}(K_0\cup L)\big)\Big\rangle,
\end{align*}
where $\big(l^{ij}(t)\big)=\big(l_{ij}(t)\big)^{-1}$, and $\langle\quad,\quad\rangle$ is defined by
\begin{align*}
\langle C_1,C_2\rangle =
\left(
\begin{tabular}{l}
\text{sum of all ways gluing the $x$-marked legs of $C_1$}\\ 
\text{to the $x$-marked legs of $C_2$ for all $x\in X$}
\end{tabular}
\right).
\end{align*}
For details, see \cite{BGRT1}. The values $\langle\!\langle\chi^{-1}\check{Z}(U_\pm)\rangle\!\rangle$ (\cite[equation (21)]{BaLa1}) are also defined in a similar way, and it is known that
\begin{align*}
\langle\!\langle\chi^{-1}\check{Z}(U_\pm)\rangle\!\rangle=\langle\chi^{-1}\nu,\chi^{-1}\nu\rangle^{-1}\exp\left(\mp\df{1}{16}{\mbox{$\begin{array}{c}
   \includegraphics[scale=0.2]{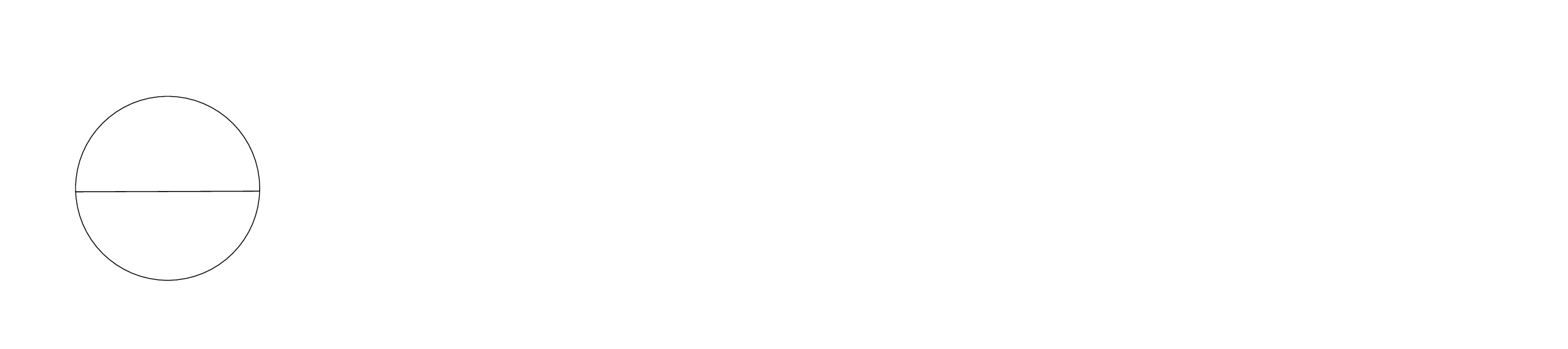}
   \end{array}$}}\right).
\end{align*}
By this procedure, we can present $\log\big(\chi^{-1}Z(K)\big)$ as
\begin{align*}
\log\big(\chi^{-1}Z(K)\big)&={\mbox{$\begin{array}{c}
   \includegraphics[scale=0.3]{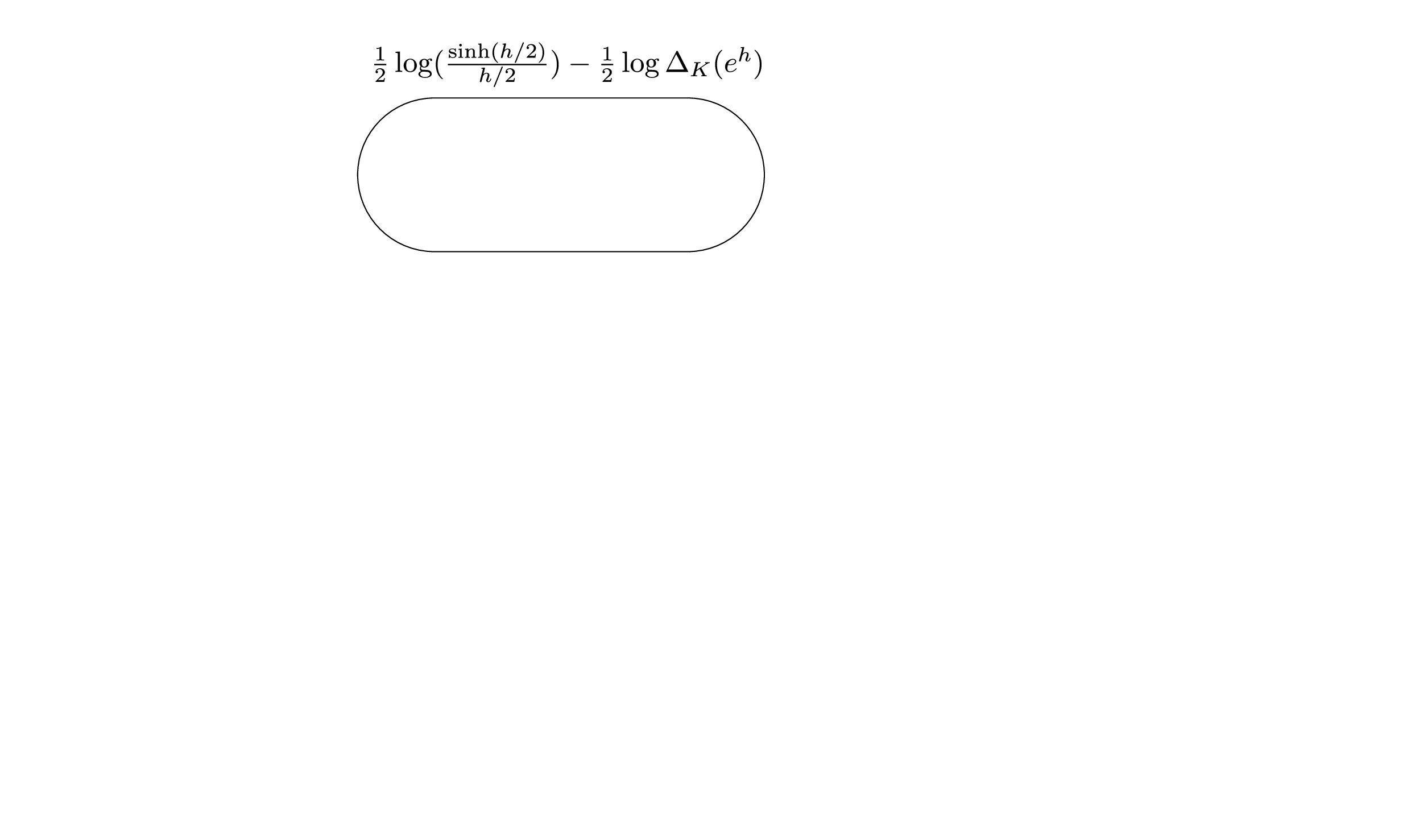}
   \end{array}$}}+\sum_{i}^{\text{finite}}{\mbox{$\begin{array}{c}
   \includegraphics[scale=0.3]{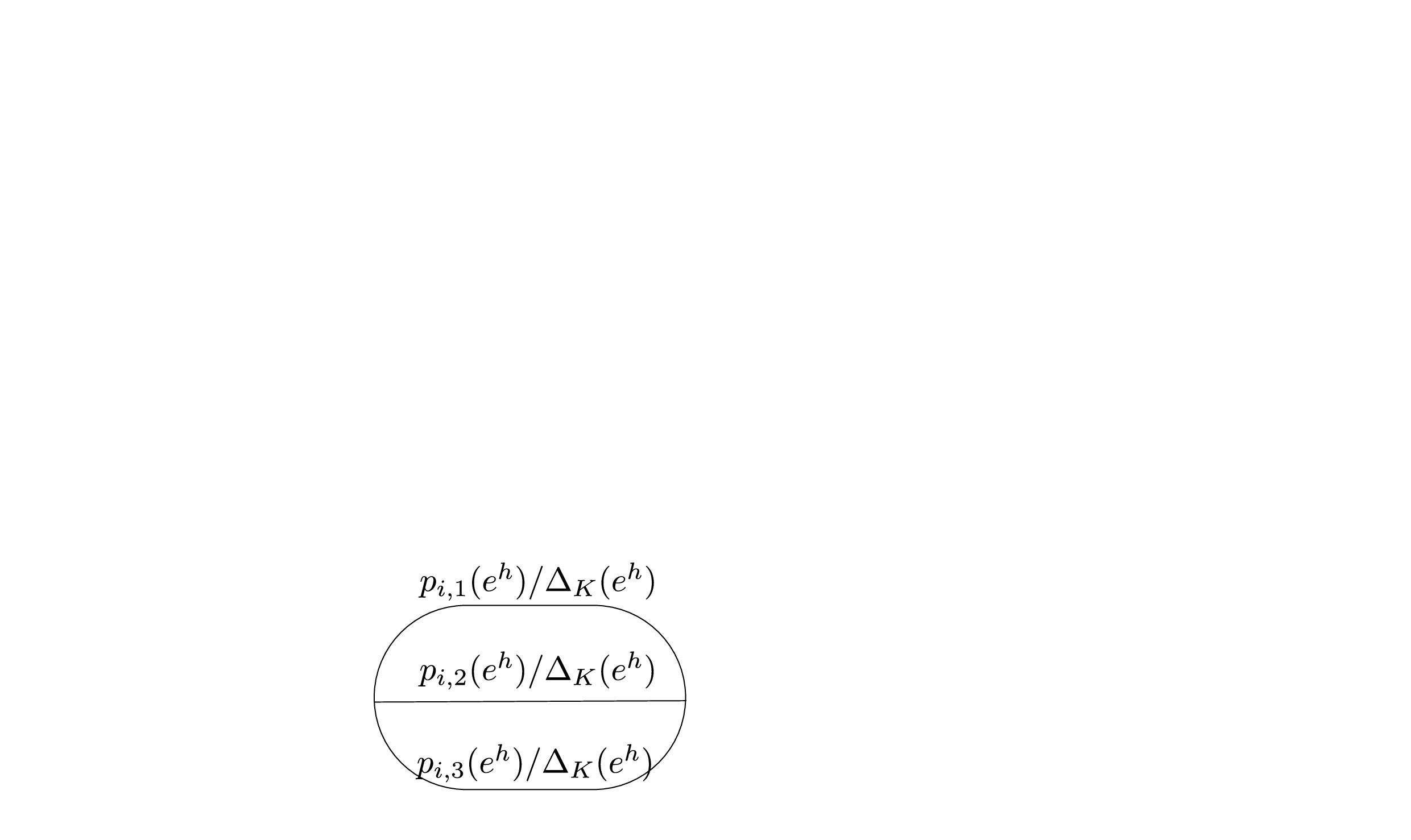}
   \end{array}$}}\\*
&+\text{(terms of ($>2$)-loop parts)},
\end{align*}
where $\Delta_K(t)$ denotes the Alexander polynomial of $K$, and by the notation (\ref{label}) we can regard the right hand side as an element in $\widetilde{\mathcal{B}_{\text{conn}}}$. Namely, $Z^{(n)}(K)$ is presented by a finite sum of $n$-loop connected open Jacobi diagram on $\emptyset$ such that one side of each edge of the diagram is labeled by a form $(\text{polynomial in $e^{\pm h}$})/\Delta_K(e^h)$. This is called the {\it loop expansion of the Kontsevich invariant of knots}.

\subsection{A review of clasper surgeries}

In this section, we briefly review clasper surgeries. For details, see \cite{Ha,Oh2}.\par
A {\it clasper} is an embedded trivalent graph in a knot complement defined by
\begin{align*}
{\mbox{$\begin{array}{c}
   \includegraphics[scale=0.15]{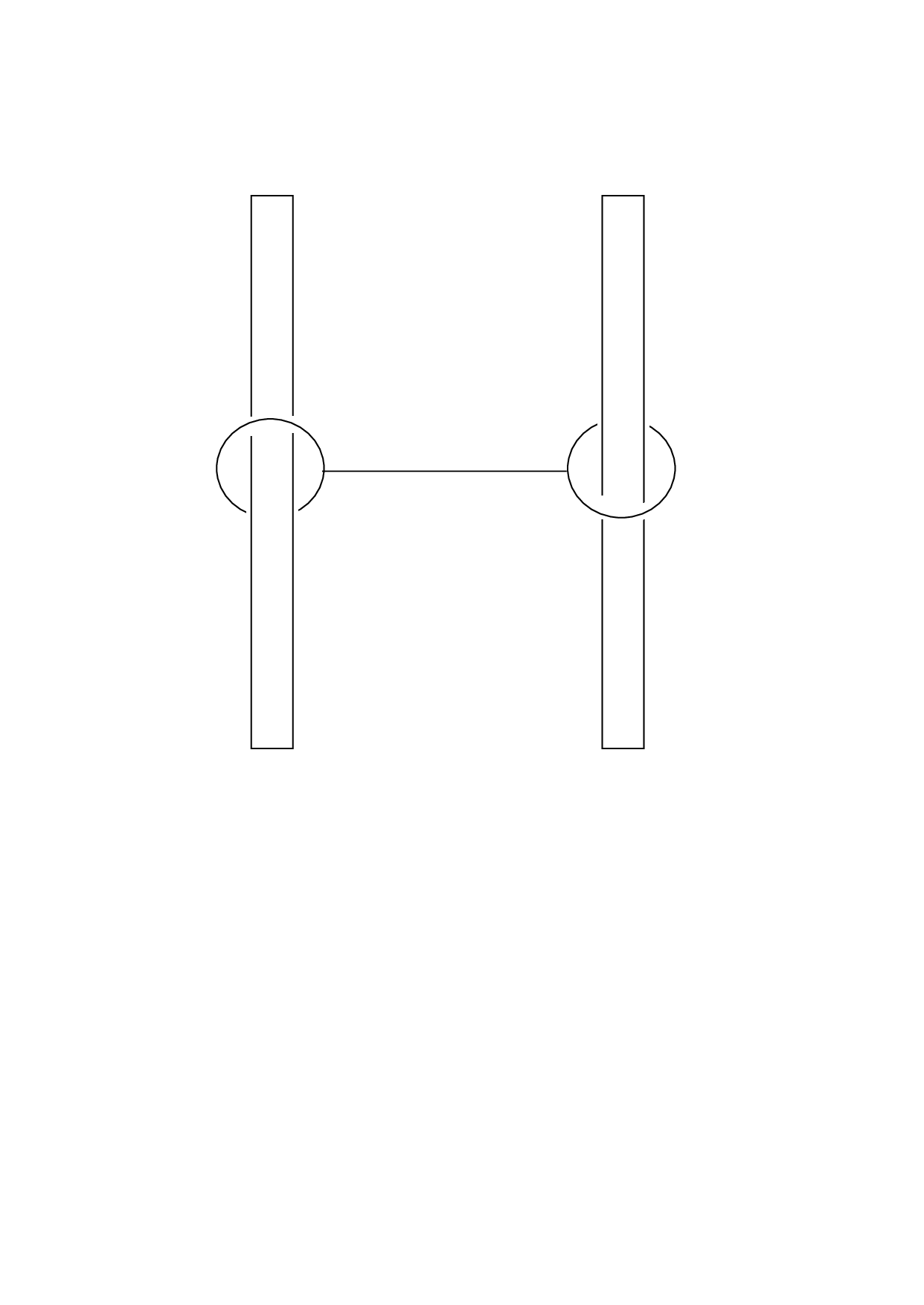}
   \end{array}$}}
&={\mbox{$\begin{array}{c}
   \includegraphics[scale=0.15]{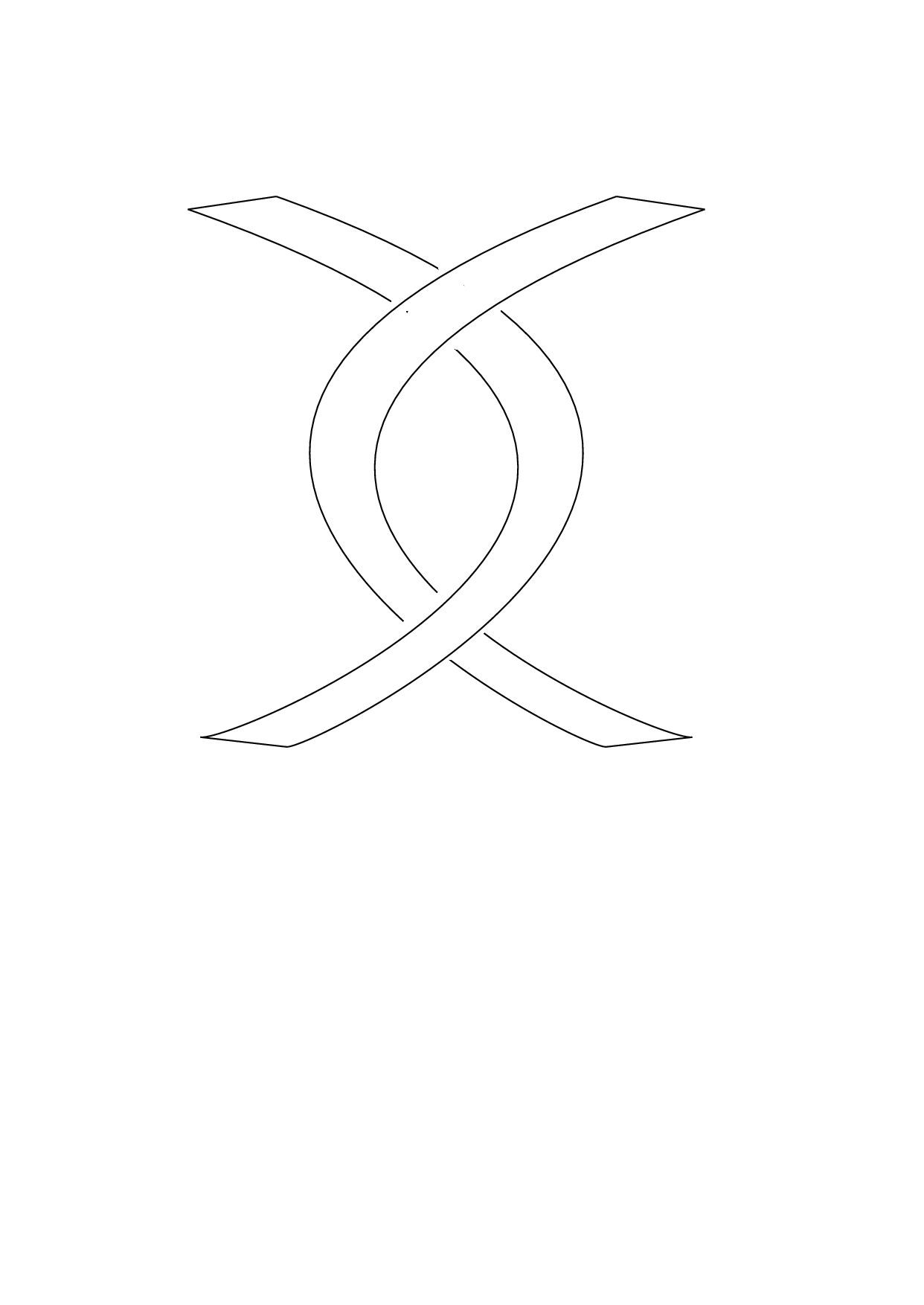}
   \end{array}$}}
\left(=\text{the result obtained by surgery along Hopf link in }{\mbox{$\begin{array}{c}
   \includegraphics[scale=0.15]{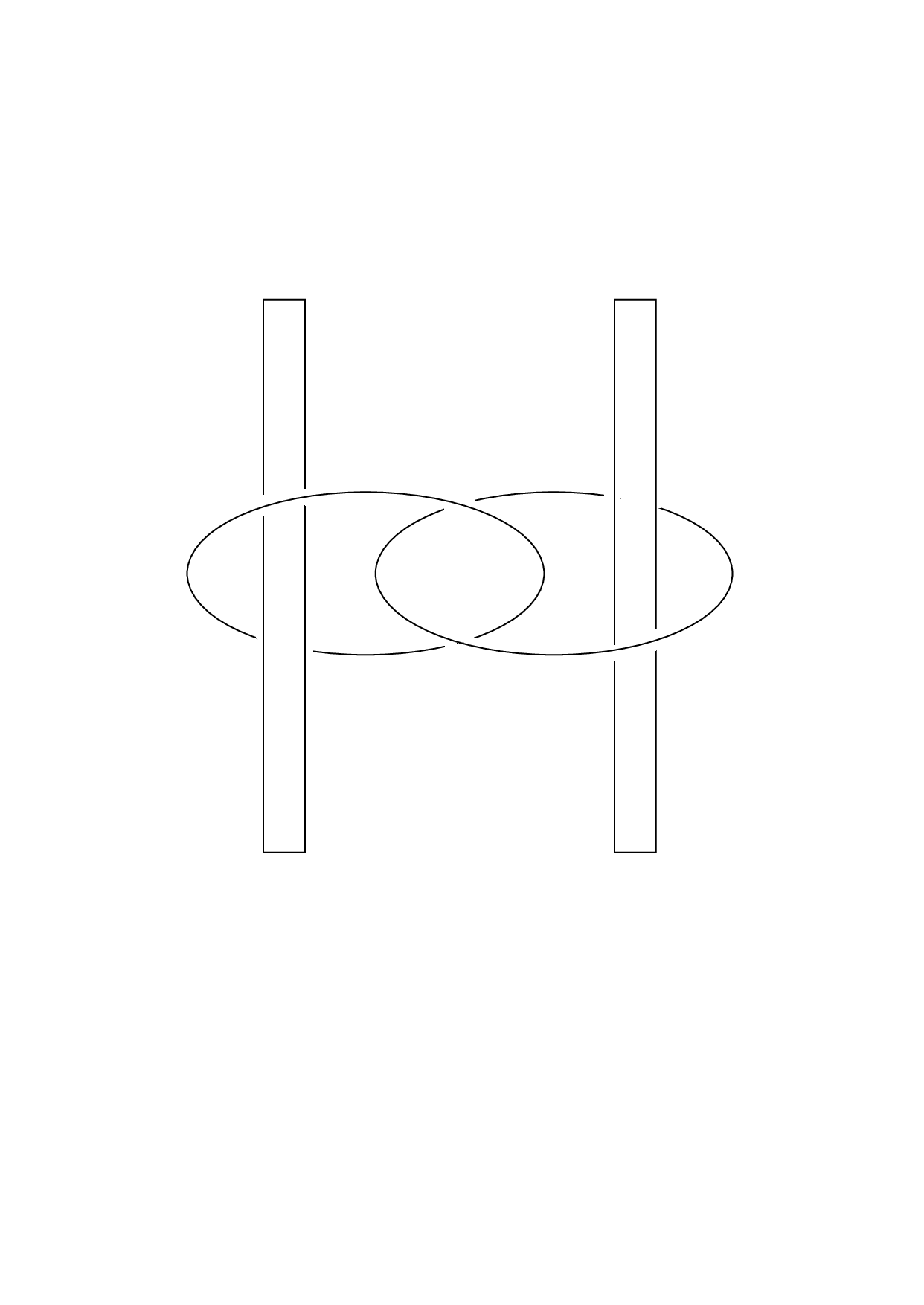}
   \end{array}$}}\right)
,\\
{\mbox{$\begin{array}{c}
   \includegraphics[scale=0.15]{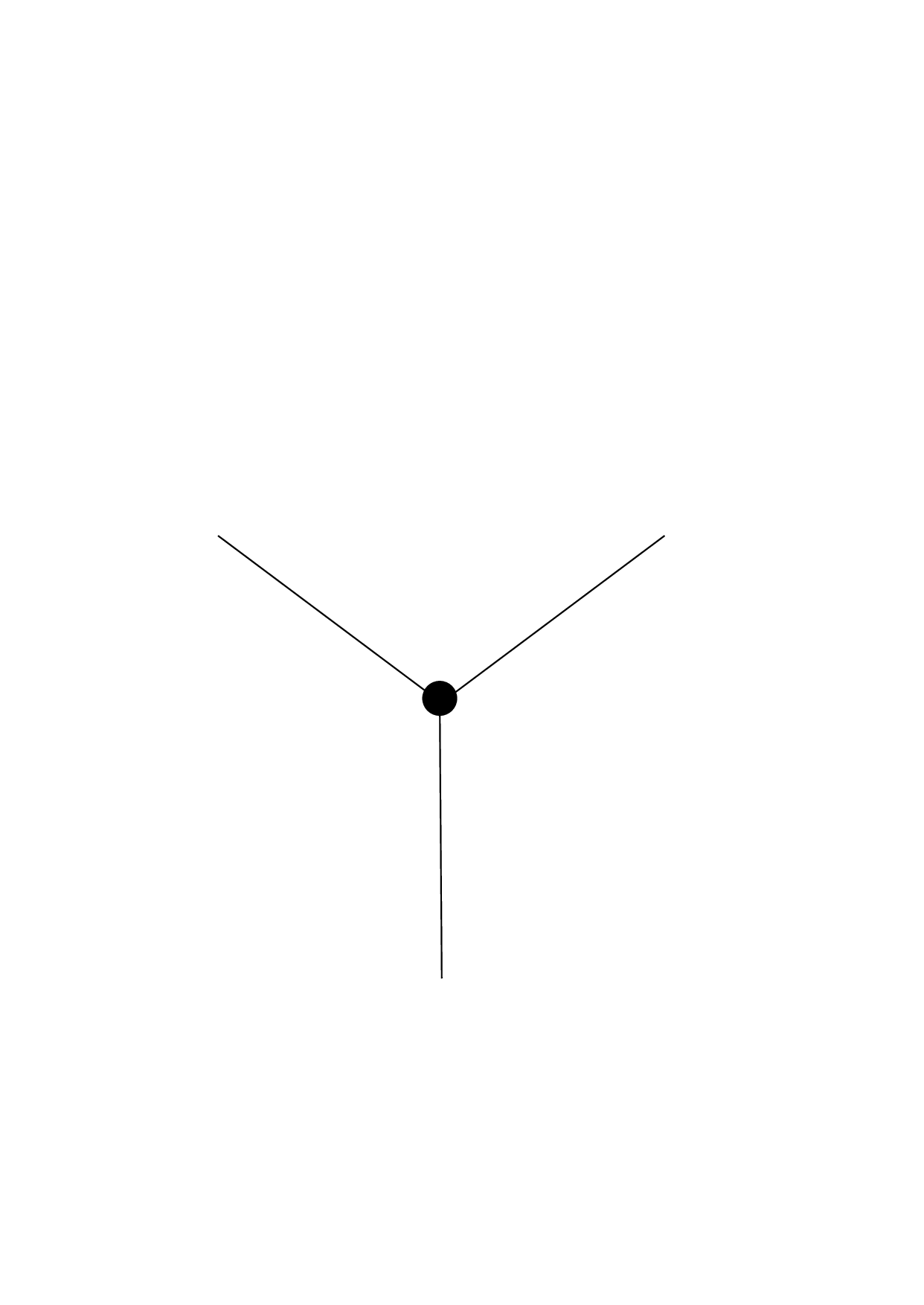}
   \end{array}$}}
&={\mbox{$\begin{array}{c}
   \includegraphics[scale=0.15]{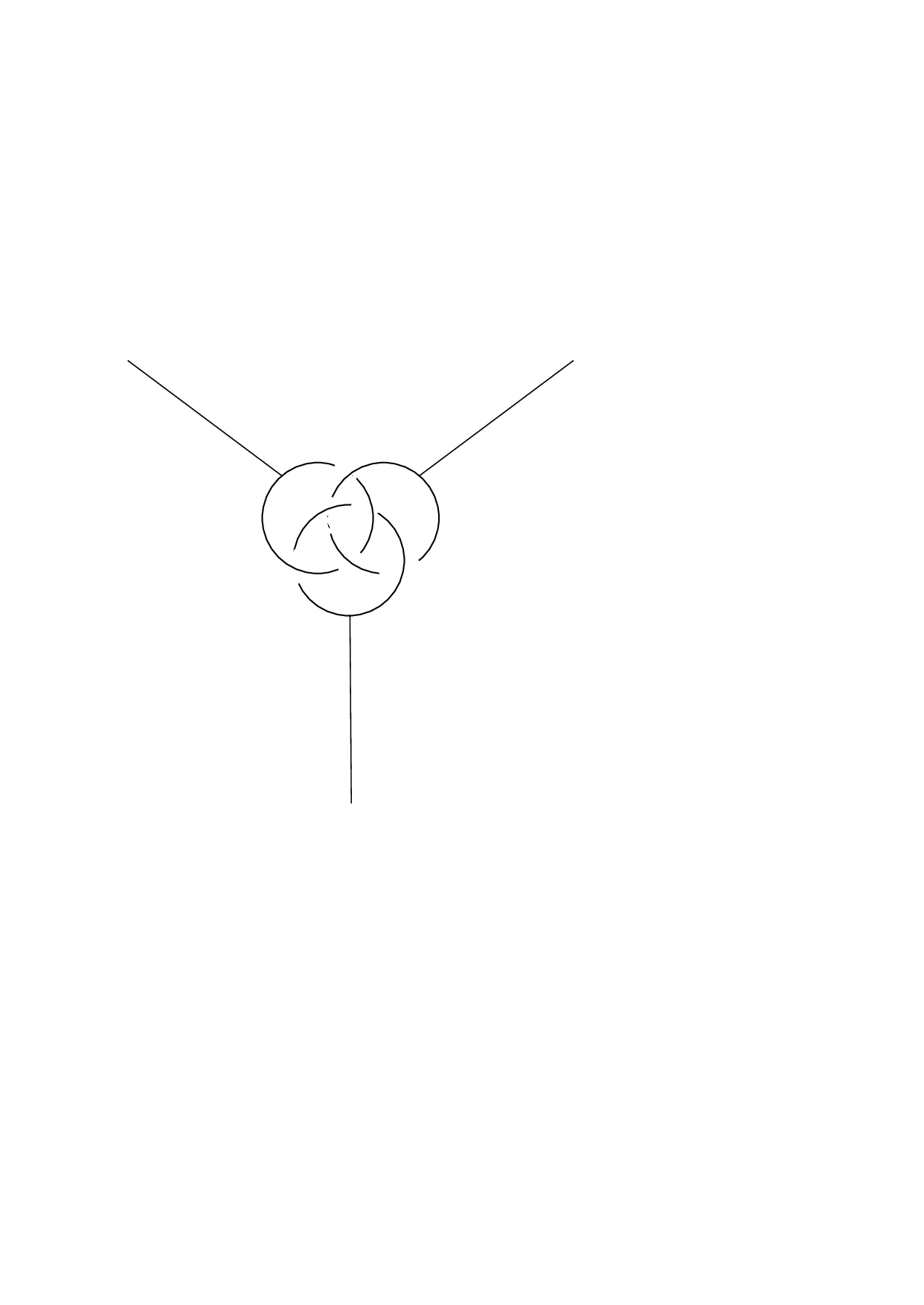}
   \end{array}$}}.
\end{align*}
We call each loop at each end of a clasper a {\it leaf}. For example, see the knots $K_1$ and $K_2$ in Proof of Theorem \ref{t21d}. A clasper surgery formula is known as follows, see for example \cite{Oh4,Oh2} for its proof,
\begin{align*}
Z\left({\mbox{$\begin{array}{c}
   \includegraphics[scale=0.3]{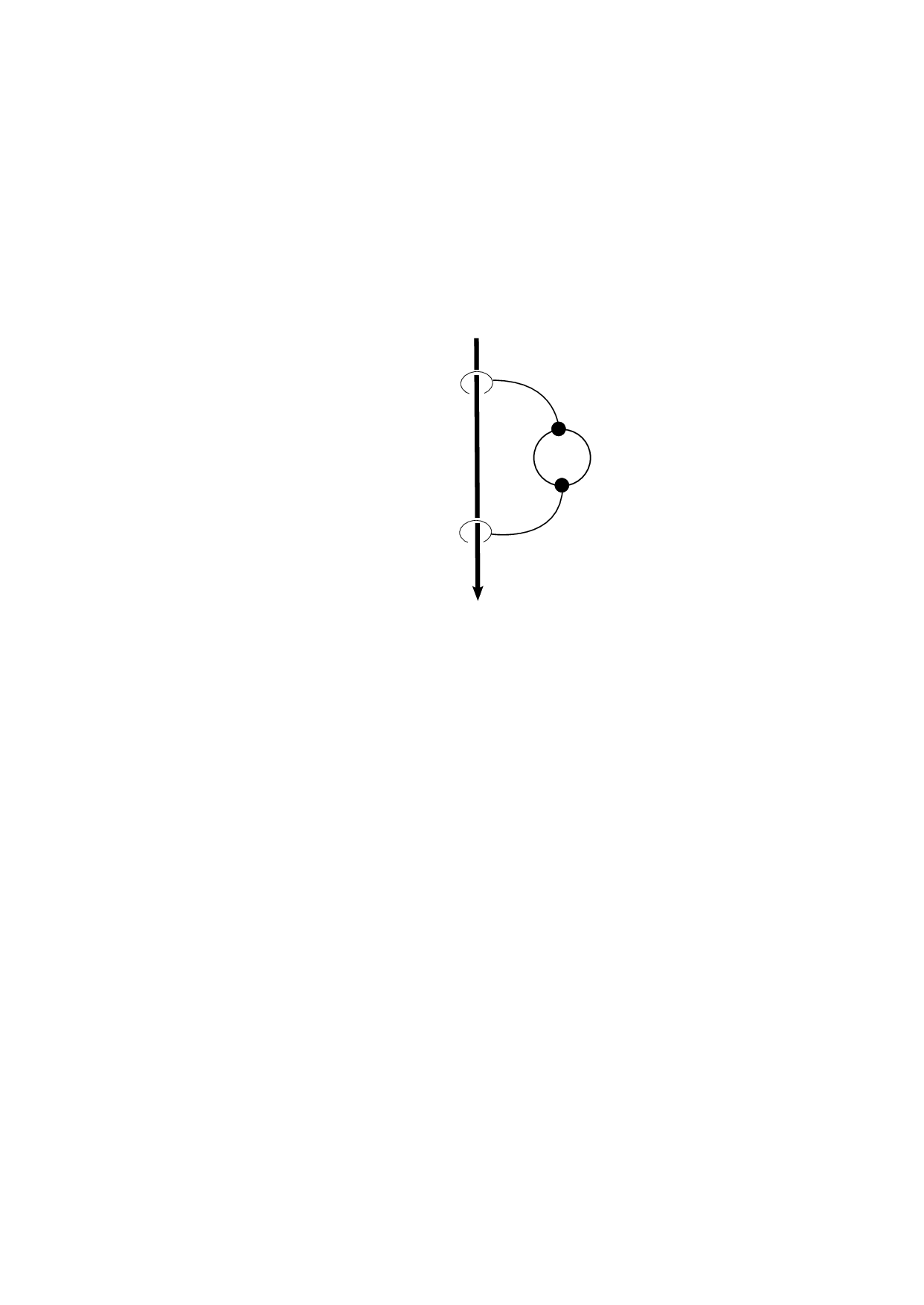}
   \end{array}$}}\right)
-Z\left({\mbox{$\begin{array}{c}
   \includegraphics[scale=0.3]{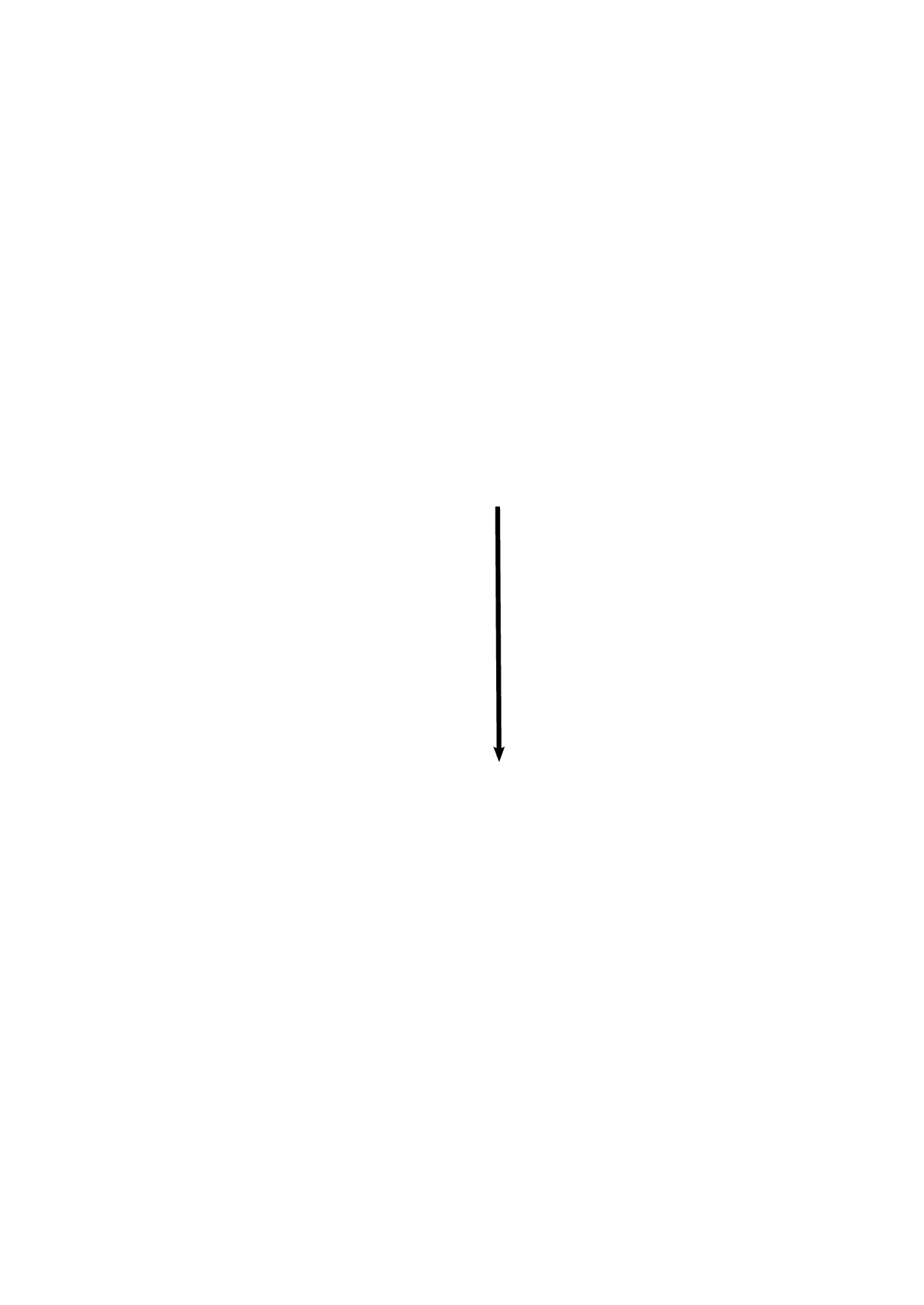}
   \end{array}$}}\right)
={\mbox{$\begin{array}{c}
   \includegraphics[scale=0.3]{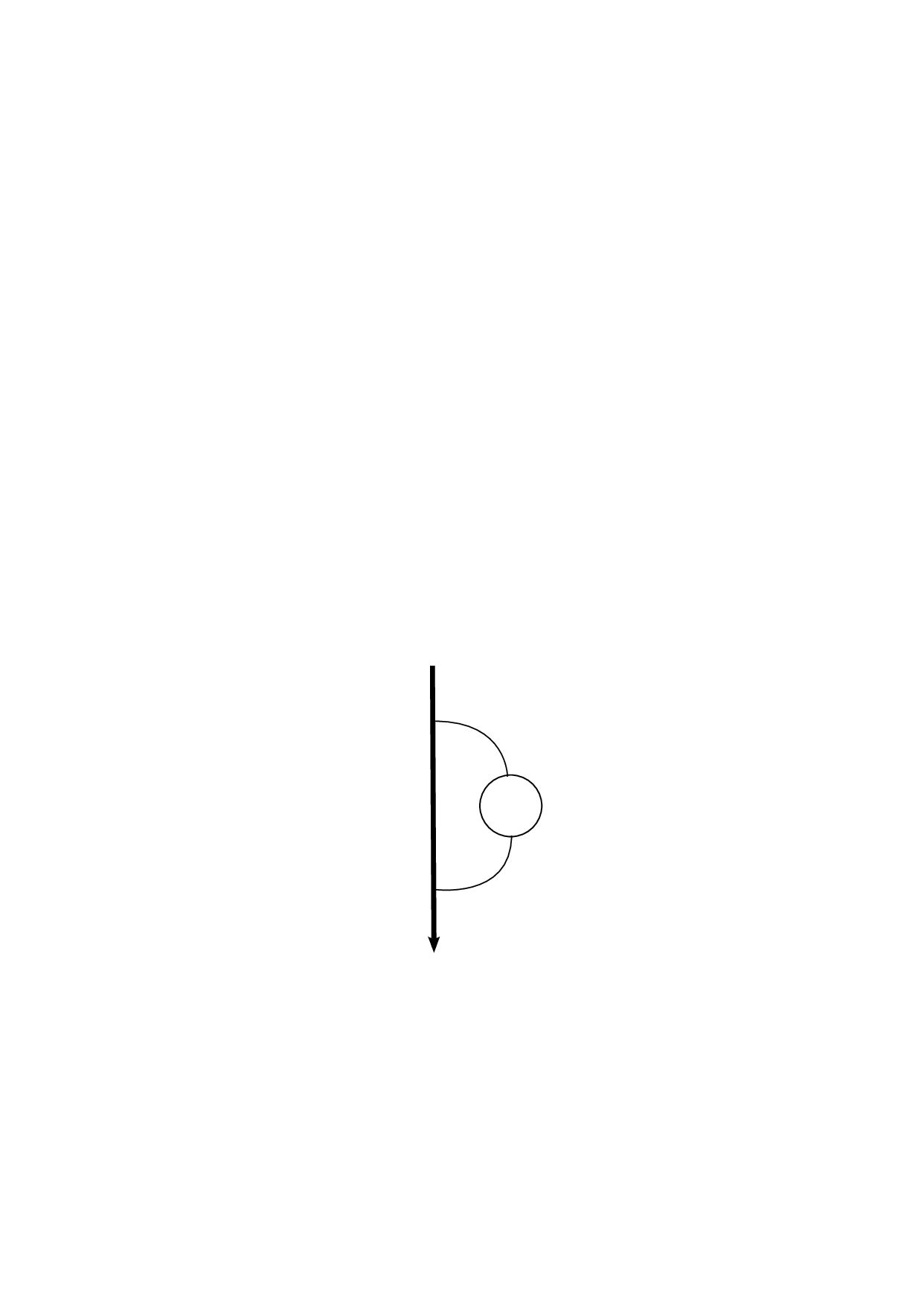}
   \end{array}$}}+(\text{terms with higher loop or higher degree}).
\end{align*}

\section{Proofs of the theorems}
\label{sec5}

In this section, we prove theorems in Section \ref{sec3}. At first, we do some preparations for their proofs. \par
Let $K$ be a genus $\leq g$ knot and $\Sigma$ a genus $g$ Seifert matrix of $K$. Then, there exists a $2g$-component framed tangle $T$ such that
\begin{align*}
T&={\mbox{$\begin{array}{c}
   \includegraphics[scale=0.3]{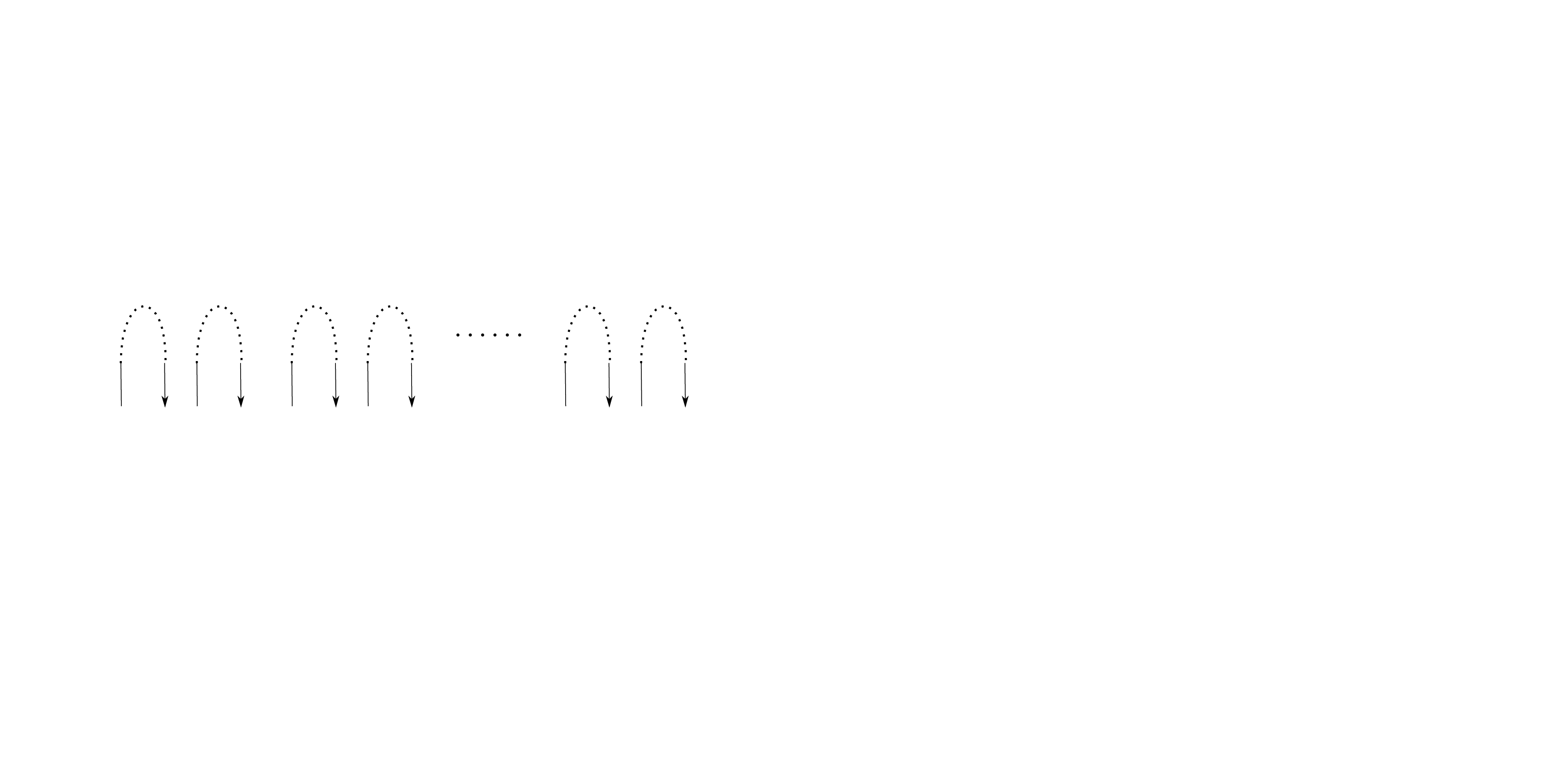}
   \end{array}$}},\\
\Sigma&={\mbox{$\begin{array}{c}
   \includegraphics[scale=0.2]{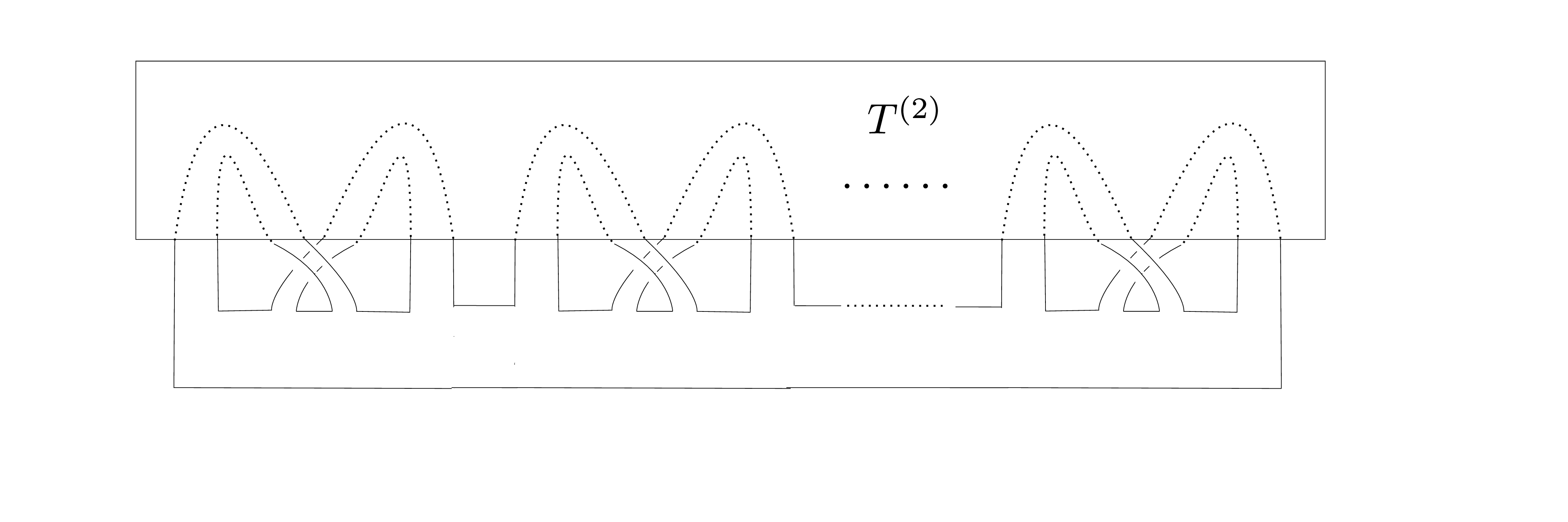}
   \end{array}$}},
\end{align*} 
where dotted lines in the picture of $T$ imply strands knotted or linked in some fashion, and $T^{(2)}$ denotes the double of $T$. We call $T$ a {\it representing tangle} of $K$, noting that a representing tangle is not unique for $K$. Then, we can obtain the following surgery presentation of $K$,
\begin{align*}
K_0\cup L={\mbox{$\begin{array}{c}
   \includegraphics[scale=0.2]{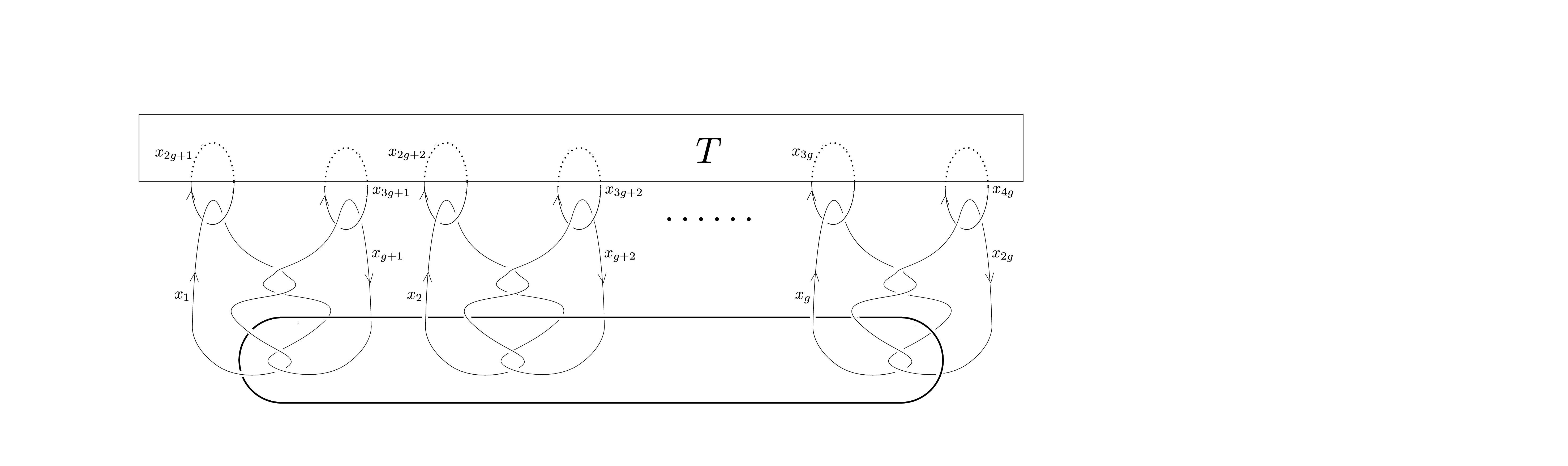}
   \end{array}$}},
\end{align*}
where $K_0$ is depicted by the thick line, $L$ is depicted by the thin lines, and $X=\{x_1,x_2,\cdots,x_{4g}\}$ is the set correspond to $L$ (see Section \ref{sec4}). Hence, its equivariant linking matrix is presented by
\begin{align*}
\big(l_{ij}(t)\big)=
\begin{pmatrix}
O&(t^{-1}-1)I&I&O\\
(t-1)I&O&O&I\\
I&O&U&W\\
O&I&W^{T}&V\\
\end{pmatrix},
\end{align*}
where $O$ denotes the zero matrix of size $g$, $I$ denotes the unit matrix of size $g$, and 
$\begin{pmatrix}
U&W\\
W^{T}&V\\
\end{pmatrix}$
is the linking matrix of $T$.\par
Let $A_t(\bigsqcup_X\downarrow)$ be the subspace of $\widetilde{\mathcal{A}(\ast_{h}\sqcup\bigsqcup_X\downarrow)}$ generated by equivalent classes of Jacobi diagrams on $\bigsqcup_X\downarrow$ whose edges are labeled by an element in $\{\emptyset,t,t^{-1}\}$, where all univalent vertices labeled by $h$ are obtained from (\ref{tehh}). Further, we also define the subspace $A_t(\ast_X)\subset\widetilde{\mathcal{A}(\ast_{\{h\}\cup X})}$ in the same way.

\begin{lem}
\label{lem1}
For any $J\in A_t(\bigsqcup_X\downarrow)$, we have $\chi_X^{-1}(J)\in A_t(\ast_X)$.
\end{lem}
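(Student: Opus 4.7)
The plan is to unpack the $t^{\pm 1}$-labels as infinite series of $h$-hairs via (\ref{label}), observe that the PBW-inverse $\chi_X^{-1}$ acts only on the $X$-strands and leaves the $h$-hairs alone, and then repackage the result back into $t^{\pm 1}$-labels. Concretely, by linearity and by continuity of $\chi_X^{-1}$ with respect to the degree filtration, it suffices to verify the statement on a single generator $D$ of $A_t(\bigsqcup_X\downarrow)$, namely a Jacobi diagram on $\bigsqcup_X\downarrow$ whose edges are labeled by elements of $\{\emptyset,t,t^{-1}\}$. Expanding each $t^{\pm 1}$-label by (\ref{tehh}) and (\ref{label}) rewrites $D$ as a convergent series $D=\sum_{\vec{k}} c_{\vec{k}}\,D^{(\vec{k})}$ in $\widetilde{\mathcal{A}(\ast_h\sqcup\bigsqcup_X\downarrow)}$, where each $D^{(\vec{k})}$ has only explicit $h$-labeled univalent vertices attached to edges of the underlying uni-trivalent graph and no $t^{\pm 1}$-labels.

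Next I would apply the decomposition $\chi_X=\bigotimes_{x\in X}\chi_x$, so that $\chi_X^{-1}$ acts independently on each $x$-strand by inverting the symmetrizer of the $x$-labeled legs, while leaving every other part of the diagram --- in particular all $h$-univalent vertices and the internal edges of the uni-trivalent graph --- untouched. Hence $\chi_X^{-1}(D^{(\vec{k})})$ is an open Jacobi diagram in $\widetilde{\mathcal{A}(\ast_{\{h\}\cup X})}$ whose pattern of $h$-hairs matches that of $D^{(\vec{k})}$ exactly, and whose $X$-skeleton is replaced by free $X$-labeled univalent vertices. Summing over $\vec{k}$ and reassembling via (\ref{label}) in the reverse direction reinstates the same $t^{\pm 1}$-labels on the corresponding edges, giving $\chi_X^{-1}(D)\in A_t(\ast_X)$.

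The main point to check carefully is that $\chi_X^{-1}$ commutes with the hair-expansion. This reduces to two routine observations. First, each $\chi_x^{-1}$ on a single $x$-strand affects only the $x$-legs on that strand and not any $h$-vertex elsewhere, which is immediate from the definition of $\chi_x$ as a symmetrizer of $x$-labeled legs. Second, the interchange of $\chi_X^{-1}$ with the infinite sum is legitimate because $\chi_X^{-1}$ is degree-preserving on each component and is therefore continuous with respect to the filtration used to define the completions.
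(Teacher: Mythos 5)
Your argument breaks down at the step where you assert that $\chi_X^{-1}(D^{(\vec{k})})$ is a single open Jacobi diagram ``whose $X$-skeleton is replaced by free $X$-labeled univalent vertices.'' The inverse PBW map is not the map that detaches legs from the skeleton. The map $\chi_x$ sends an open diagram with $k$ $x$-legs to the average over all $k!$ orderings of those legs on the strand; a generic diagram on the strand is not symmetric in its legs, so inverting $\chi_x$ produces, in addition to the naively freed diagram, correction terms coming from the STU relation. Concretely, $\chi^{-1}$ satisfies the recursion
\begin{align*}
\chi^{-1}(J)=J_{\mathrm{free}}+\frac{1}{k!}\sum_{\sigma\in\mathcal{S}_k}\chi^{-1}\big(J-\sigma(J)\big),
\end{align*}
where each $J-\sigma(J)$ is rewritten by STU as a sum of diagrams with $k-1$ skeleton legs and new internal trivalent vertices. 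If $\chi^{-1}$ were simply ``erase the skeleton,'' the lemma would be a tautology; the entire content of the statement is that these STU correction terms again lie in $A_t$, i.e.\ that performing STU on a diagram whose edges carry $t^{\pm 1}$-labels yields diagrams of the same labeled form. That is exactly what the relations (\ref{trel}) guarantee, and it is what the paper's proof establishes by induction on the number of univalent vertices on the skeleton. Your proposal never touches this point; your two ``routine observations'' (locality of $\chi_x^{-1}$ and continuity with respect to the degree filtration) are true but do not address it.

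The preliminary hair-expansion via (\ref{label}) and (\ref{tehh}) is also an unnecessary detour: even after expanding, the $x$-legs on the strands still undergo the full PBW inversion with its STU corrections, so the same gap reappears; and you would additionally need to argue that the resulting infinite sums re-collect into diagrams whose edges are labeled only by elements of $\{\emptyset,t,t^{-1}\}$, which is the restrictive condition defining $A_t(\ast_X)$. The correct route is to keep the $t$-labels intact, reduce to a single strand and a single diagram, and induct on the number of skeleton legs, checking at each STU step that the labeling survives.
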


\begin{proof}
It is sufficient to prove the statement when $J$ has the single term and $X=\{\text{pt}\}$. We prove it by induction on the number of univalent vertices of $J$. If $J$ has one univalent vertex, it is obvious. Suppose that $J$ has $k$ univalent vertices. Then, by the definition of $\chi$, we get
\begin{align}
\label{xdd}
\chi^{-1}\left(\mbox{$\begin{array}{c}
   \includegraphics[scale=0.2]{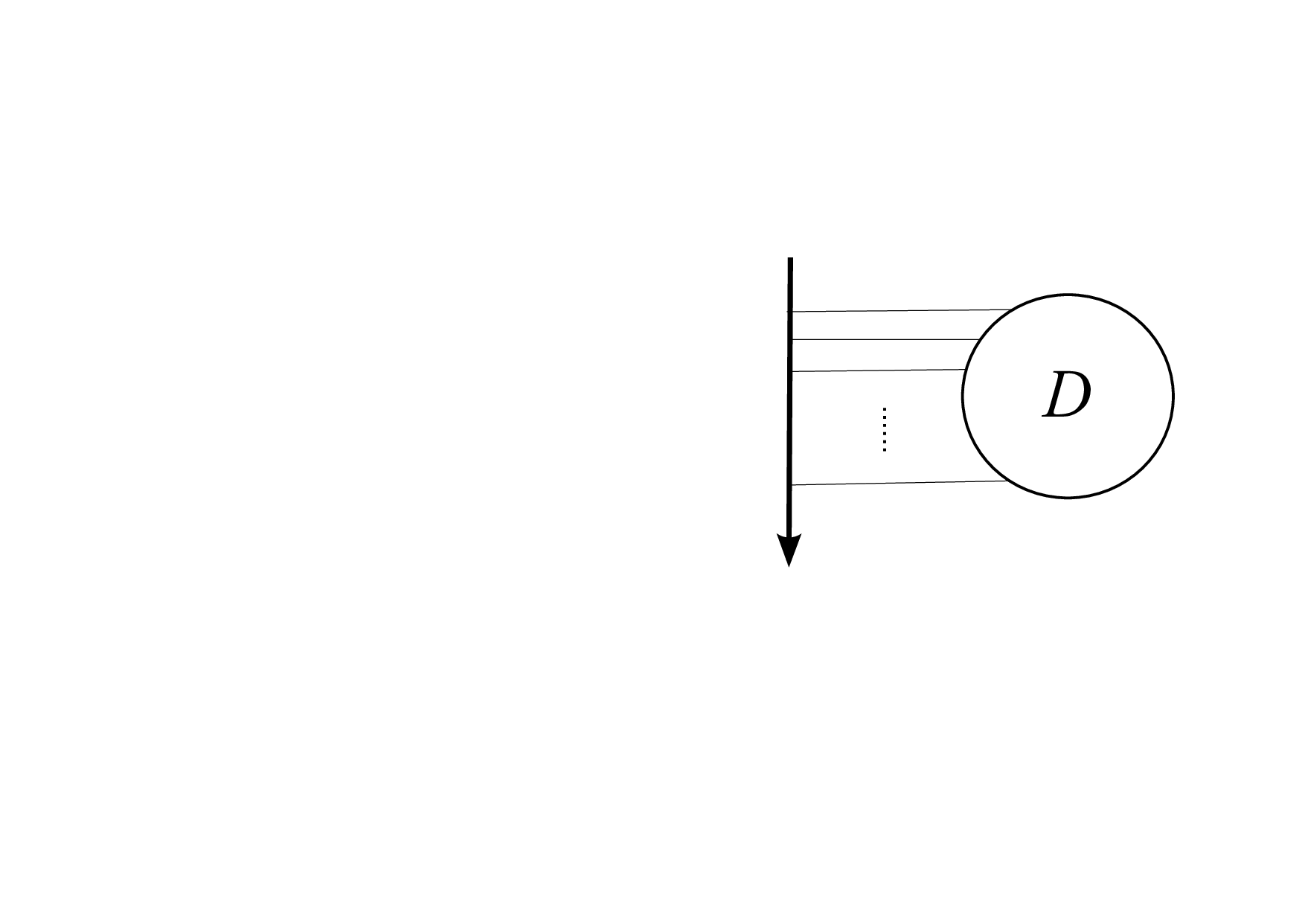}
   \end{array}$}\right)=\mbox{$\begin{array}{c}
   \includegraphics[scale=0.2]{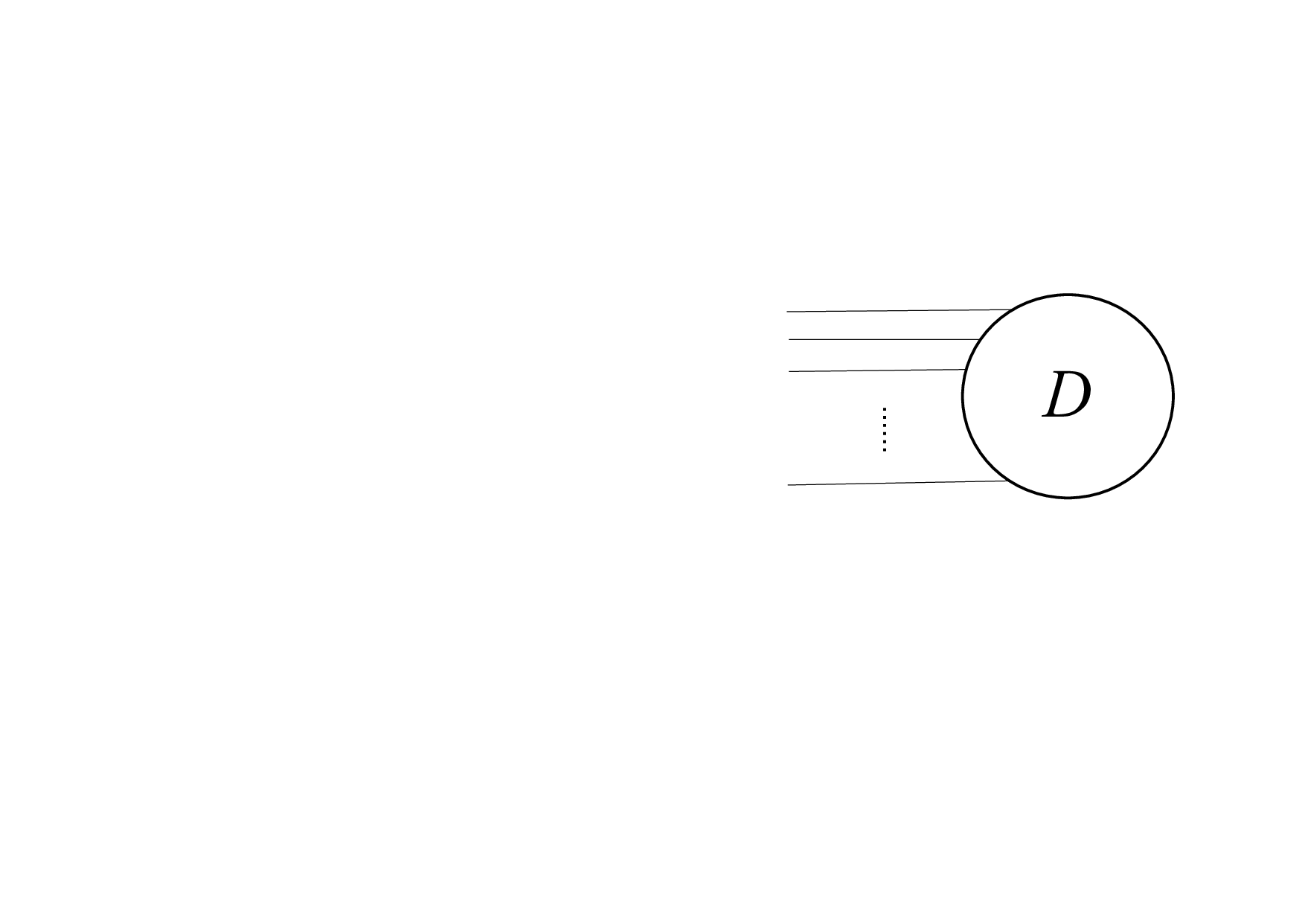}
   \end{array}$}+\df{1}{k!}\sum_{\sigma\in\mathcal{S}_k}\chi^{-1}\left(\mbox{$\begin{array}{c}
   \includegraphics[scale=0.2]{skD.ps}
   \end{array}$}-\sigma\Big(\mbox{$\begin{array}{c}
   \includegraphics[scale=0.2]{skD.ps}
   \end{array}$}\Big)\right),
\end{align}
where $J=\mbox{$\begin{array}{c}
   \includegraphics[scale=0.2]{skD.ps}
   \end{array}$}$, and $\sigma$ acts on $J$ by permutation of univalent vertices of $J$. However, the term $\mbox{$\begin{array}{c}
   \includegraphics[scale=0.2]{skD.ps}
   \end{array}$}-\sigma\Big(\mbox{$\begin{array}{c}
   \includegraphics[scale=0.2]{skD.ps}
   \end{array}$}\Big)$ can be presented by a finite sum of Jacobi diagram with $k-1$ univalent vertices by performing the STU relation, and this relation does not change the labeling $t^{\pm 1}$ at all. Thus, this term belongs to $A_t(\bigsqcup_X\downarrow)$, and by induction hypothesis, the right hand side of (\ref{xdd}) belongs to $A_t(\ast_X)$.
\end{proof}

\begin{cor}
\label{cor2}
Let $K$ be a genus $\leq g$ knot, and let $K_0\cup L$ be the surgery presentation of $K$ as above. Then, we have $\chi^{-1}\check{Z}(K_0\cup L)\in A_t(\ast_X)$.
\end{cor}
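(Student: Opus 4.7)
The plan is to combine Lemma \ref{lem1} with a direct inspection of the value of $\chi_h^{-1}\check{Z}(K_0\cup L)$ on the explicit surgery presentation of $K$. Since $\chi^{-1}\check{Z}(K_0\cup L) = \chi_X^{-1}\chi_h^{-1}\check{Z}(K_0\cup L)$ and Lemma \ref{lem1} says $\chi_X^{-1}$ sends $A_t(\bigsqcup_X\downarrow)$ into $A_t(\ast_X)$, the corollary reduces to the intermediate statement
\[
\chi_h^{-1}\check{Z}(K_0\cup L)\in A_t(\bigsqcup_X\downarrow).
\]

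To prove this intermediate statement, I would decompose $K_0\cup L$ as a composition of elementary tangles read off from its explicit block form: the first $2g$ components of $L$ each puncture the disk bounded by $K_0$ exactly once (this is what produces the $(t^{\pm 1}-1)I$ blocks of the equivariant linking matrix), while the remaining $2g$ components form the doubled framed tangle $T^{(2)}$, which is disjoint from $K_0$ (whence the $t$-independence of the $U,V,W$ blocks). By the multiplicativity of $Z$ under tangle composition, $Z(K_0\cup L)$ is a product of local contributions: each puncture of $K_0$ contributes a long-Hopf-link factor, and the tangle $T^{(2)}$ contributes a factor supported on $\bigsqcup_X\downarrow$ alone. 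Applying $\chi_h^{-1}$ on the $K_0$-component replaces each long-Hopf-link factor, via the closed-form (\ref{xzt}), by a single $t^{\pm 1}$-label on the relevant $L$-edge (together with a $\chi^{-1}\nu$-factor), while leaving the $T^{(2)}$-contribution unchanged. Passing from $Z$ to $\check{Z}$ merely adds $\nu$-factors on each $L$-component, which carry no $h$-labels. Assembling these pieces shows every edge label lies in $\{\emptyset, t, t^{-1}\}$ and every $h$-labeled univalent vertex arises as prescribed by (\ref{tehh}), so the product belongs to $A_t(\bigsqcup_X\downarrow)$.

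The main obstacle I anticipate is the careful bookkeeping that after applying $\chi_h^{-1}$ only the closed-form labels $t^{\pm 1}$ on edges survive, with no uncontrolled power-series in $h$ produced. This is guaranteed once one verifies that the decomposition of $K_0\cup L$ into elementary tangles contains the $K_0$-strand only in long-Hopf-link building blocks of the type treated by (\ref{xzt}); the STU-type relations (\ref{trel}) then handle the conversion between $h$-labeled univalent vertices and $t^{\pm 1}$-labeled edges compatibly with the definition of $A_t(\bigsqcup_X\downarrow)$. Finally, invoking Lemma \ref{lem1} yields the corollary.
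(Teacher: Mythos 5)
Your overall strategy coincides with the paper's: reduce the corollary to the intermediate claim $\chi_h^{-1}\check{Z}(K_0\cup L)\in A_t(\bigsqcup_X\downarrow)$, establish that claim using the closed form (\ref{xzt}), and then invoke Lemma \ref{lem1}. However, there is a genuine gap in the middle step, located exactly where you defer to ``careful bookkeeping.'' The formula (\ref{xzt}) places the label $t^{\pm1}$ on the \emph{skeleton} component of $\bigsqcup_X\downarrow$, not on an edge of the uni-trivalent graph, whereas membership in $A_t(\bigsqcup_X\downarrow)$ requires every label to sit on an edge, with the $h$-labeled univalent vertices arising only through (\ref{tehh}). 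One must therefore still transfer every skeleton label onto the edges. The paper does this with the second relation of (\ref{trel}), and the transfer succeeds precisely because the linking number of $K_0$ with each component of $L$ is $0$: the product of all $t^{\pm1}$ labels accumulated on a given skeleton component is $t^{\mathrm{lk}(K_0,x_i)}=1$, so pushing them together along that component leaves no residue on the skeleton while depositing $t^{\pm1}$ factors on the legs it passes. Your description contradicts this hypothesis: you assert that each of the first $2g$ components of $L$ punctures the disk bounded by $K_0$ \emph{exactly once}, which would force $\mathrm{lk}(K_0,x_i)=\pm1$ and leave an irremovable residual label $t^{\pm1}$ on that skeleton component, so the resulting element would \emph{not} lie in $A_t(\bigsqcup_X\downarrow)$. (Incidentally, the blocks $(t^{\pm1}-1)I$ record equivariant linking numbers between components of $L$, not single punctures of $K_0$'s disk; note that they vanish at $t=1$, consistent with the zero-linking hypothesis stated in Section \ref{sec4}.)

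To repair the argument, replace the single-puncture picture by the hypothesis actually built into the surgery presentation, namely that each component of $L$ meets a disk bounded by $K_0$ in algebraically zero points, and then carry out the skeleton-to-edge transfer via the second relation of (\ref{trel}) as above. With that correction your proof becomes essentially the paper's proof.
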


\begin{proof}
At first, by using the equation (\ref{xzt}), the labeling $t^{\pm 1}$ of $\chi_{h}^{-1}\check{Z}(K_0\cup L)$ appears only on the skeleton $\bigsqcup_X\downarrow$. However, since the linking number of $K_0$ and each component of $L$ is equal to 0, all the labeling $t^{\pm 1}$ on the skeleton can be moved on edges by the second relation of (\ref{trel}). Hence, by the straightforward calculation, the value $\chi_{h}^{-1}\check{Z}(K_0\cup L)$ can be deformed into an element in $A_t(\bigsqcup_X\downarrow)$. Thus, by Lemma \ref{lem1}, it holds that $\chi^{-1}\check{Z}(K_0\cup L)=\chi_X^{-1}\chi_h^{-1}\check{Z}(K_0\cup L)\in A_t(\ast_X)$.
\end{proof}

For positive integers $n,m$, and $\Delta(t)\in\mathcal{Z}$, let $E_0(n,m,\Delta(t))$ be the $\mathbb{Q}$-vector space spanned by equivalent classes of $n$-loop connected Jacobi diagrams on $\emptyset$ such that one side of each edge of the diagram is labeled by one of the elements in $\{\emptyset\}\cup\{t^k/\Delta(t)\mid k=-m,\cdots,-1,1,\cdots,m\}$. By (\ref{tehh}), we can regard this space as subspaces of $\widetilde{\mathcal{B}_{\text{conn}}}$, and this is finite dimensional subspaces.

\begin{proof}[Proof of Theorem \ref{thm1}]
Let $K$ be a knot in $\mathcal{K}_{\leq g}^{\Delta(t)}$, and we consider the above surgery presentation $K_0\cup L$. By Corollary \ref{cor2},
\begin{align*}
\chi^{-1}\check{Z}(K_0\cup L)=\exp\Big(\df{1}{2}\sum_{x_i,x_j\in X}\mbox{$\begin{array}{c}
   \includegraphics[scale=0.2]{st.ps}
 \end{array}$}\Big)\cup P\big(\chi^{-1}\check{Z}(K_0\cup L)\big)\in A_t(\ast_X).
\end{align*} 
Recall that its Aarhus integral is presented by
\begin{align}
\label{aahus}
\langle\!\langle\chi^{-1}\check{Z}(K_0\cup L)\rangle\!\rangle=\Big\langle\exp\Big(-\df{1}{2}\sum_{x_i,x_j\in X}\mbox{$\begin{array}{c}
   \includegraphics[scale=0.2]{stt.ps}
 \end{array}$}
\Big),P\big(\chi^{-1}\check{Z}(K_0\cup L)\big)\Big\rangle.
\end{align}
Here,
\begin{align*}
\big(l^{ij}(t)\big)=\big(l_{ij}(t)\big)^{-1}=\begin{pmatrix}
O&(t^{-1}-1)I&I&O\\
(t-1)I&O&O&I\\
I&O&U&W\\
O&I&W^{T}&V\\
\end{pmatrix}^{-1}
=\df{1}{\Delta(t)}\cdot\big(q^{ij}(t)\big),
\end{align*}
where $q^{ij}(t)\in\mathbb{Q}[t^{\pm 1}]$ is of the form $q^{ij}(t)=\displaystyle\sum_{k=-g}^{g}r_kt^k$, $r_k\in\mathbb{Q}$. Thus, since $P\big(\chi^{-1}\check{Z}(K_0\cup L)\in A_t(\ast_X)$, the logarithm of the right hand side of (\ref{aahus}) belongs to $\displaystyle\bigoplus_{n\geq 2}E_0(n,g+2,\Delta(t))$. Hence, the $n$-loop part $Z^{(n)}(K)=\iota_n\left(\log\left(\df{\langle\!\langle\chi^{-1}\check{Z}(K_0\cup L)\rangle\!\rangle}{\langle\!\langle\chi^{-1}\check{Z}(U_+)\rangle\!\rangle^{\sigma_+}\langle\!\langle\chi^{-1}\check{Z}(U_-)\rangle\!\rangle^{\sigma_-}}\right)\right)$ belongs to $E_0(n,g+2,\Delta(t))$. This subspace is finite dimensional, so it conclude that $\mathcal{V}(n,g,\Delta(t))$ is finite dimensional. \par
We can choose a basis $\{\beta_1,\cdots,\beta_d\}$ of $\mathcal{V}(n,g,\Delta(t))$ with finite number of elements. Then, it is obvious that we can write
\begin{align*}
Z^{(n)}|_{\mathcal{K}_{\leq g}^{\Delta(t)}}=c_1\beta_1+\cdots+c_d\beta_d,
\end{align*}
where $c_1,\cdots,c_d:\mathcal{K}_{\leq g}^{\Delta(t)}\to\mathbb{Q}$ are (restrictions of) Vassiliev invariants.
\end{proof}

\begin{rem}
Let $E_1(n,m,\Delta(t))$ be the $\mathbb{Q}$-vector space spanned by equivalent classes of $n$-loop connected Jacobi diagrams on $\emptyset$ such that one side of each edge of the diagram is labeled by one of the elements in $\{\emptyset\}\cup\{(t^k-1)/\Delta(t)\mid k=-m,\cdots,-1,1,\cdots,m\}$. This is also a finite dimensional subspace of $\widetilde{\mathcal{B}_{\text{conn}}}$. We have $Z^{(\geq 2)}(K)|_{t\to 1}=Z^{(\geq 2)}(K)|_{h\to 0}=0$, since it is known that this is equal the ($\geq 2$)-loop part of the LMO invariant of $S^3$, whose value is 0. Thus, in fact, $Z^{(n)}(K)$ belongs to $E_1(n,g+2,\Delta(t))$.
\end{rem}

\begin{rem}
By Proof of Theorem \ref{thm1}, we can obtain $d(n,g,\Delta(t))\leq m_n\cdot(2g+5)^{3(n-1)}$, where $m_n$ denotes the number of $n$-loop connected Jacobi diagrams on $\emptyset$, since such a diagram has $3(n-1)$ edges. However, this inequality is tremendously rough and is by no means useful.
\end{rem}

\begin{proof}[Proof of Theorem \ref{t21d}]
The ``otherwise'' case is obvious since the Alexander polynomial of any genus 1 knot is given by $1+a(t+t^{-1}-2)$, where $a\in\mathbb{Z}$. Then, assume that $\Delta(t)=1+a(t+t^{-1}-2)$ for some $a\in\mathbb{Z}$, and let $K$ be a gnus 1 knot. We put $u=t+t^{-1}-2$ and $v=t-t^{-1}$, where $t=e^h$. It is known \cite[Theorem 3.1]{Oh4} that $Z^{(2)}(K)$ is presented by $Z^{(2)}(K)=p\theta_1+q\theta_2$, where $p,q\in\mathbb{Q}$, and 
\begin{align*}
\theta_1={\mbox{$\begin{array}{c}
   \includegraphics[scale=0.25]{2loop1d.ps}
   \end{array}$}}\quad, \quad\theta_2={\mbox{$\begin{array}{c}
   \includegraphics[scale=0.25]{2loop2d.ps}
   \end{array}$}}+\left(\df{4}{3}a-\df{1}{3}\right){\mbox{$\begin{array}{c}
   \includegraphics[scale=0.25]{2loop3d.ps}
   \end{array}$}}.
\end{align*}
Thus, we obtain $\mathcal{V}(2,1,\Delta(t))\subset\text{span}_{\mathbb{Q}}(\theta_1,\theta_2)$ and $d(2,1,\Delta(t))\leq 2$.\par
Assume that $x\theta_1+y\theta_2=0$. By the straightforward calculation, we have
\begin{align*}
\theta_1&={\mbox{$\begin{array}{c}
   \includegraphics[scale=0.2]{2looph1.ps}
   \end{array}$}}+\left(-2a+\df{1}{6}\right){\mbox{$\begin{array}{c}
   \includegraphics[scale=0.2]{2looph2.ps}
   \end{array}$}}+(\text{terms with higher degree}),\\
\theta_2&=-2{\mbox{$\begin{array}{c}
   \includegraphics[scale=0.2]{2looph1.ps}
   \end{array}$}}+\left(\df{28}{3}a-\df{5}{3}\right){\mbox{$\begin{array}{c}
   \includegraphics[scale=0.2]{2looph2.ps}
   \end{array}$}}+(\text{terms with higher degree}).
\end{align*}
Hence, we have
\begin{align*}
\begin{pmatrix}
1&-2\\
-2a+\df{1}{6}&\df{28}{3}a-\df{5}{3}\\
\end{pmatrix}
\begin{pmatrix}
x\\
y
\end{pmatrix}=\begin{pmatrix}
0\\
0
\end{pmatrix}
\end{align*}
However, since $a\in\mathbb{Z}$, $\begin{vmatrix}
1&-2\\
-2a+\df{1}{6}&\df{28}{3}a-\df{5}{3}\\
\end{vmatrix}=\df{16}{3}a-\df{4}{3}\neq 0$, we have $x=y=0$. Thus, $\theta_1,\theta$ are linearly independent. Consider the following three knots,
\begin{align*}
K_0={\mbox{$\begin{array}{c}
   \includegraphics[scale=0.25]{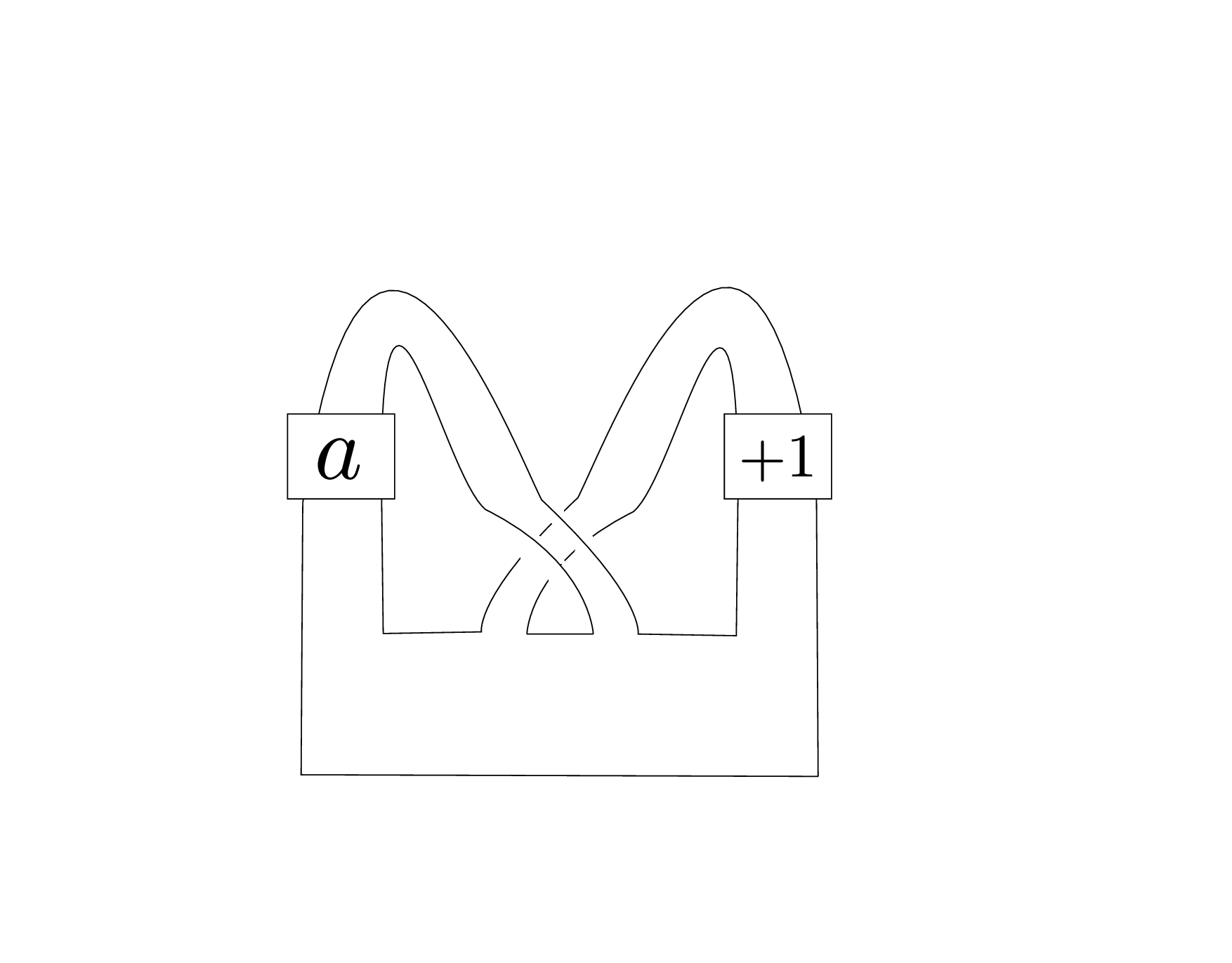}
   \end{array}$}}\quad,\quad K_1={\mbox{$\begin{array}{c}
   \includegraphics[scale=0.25]{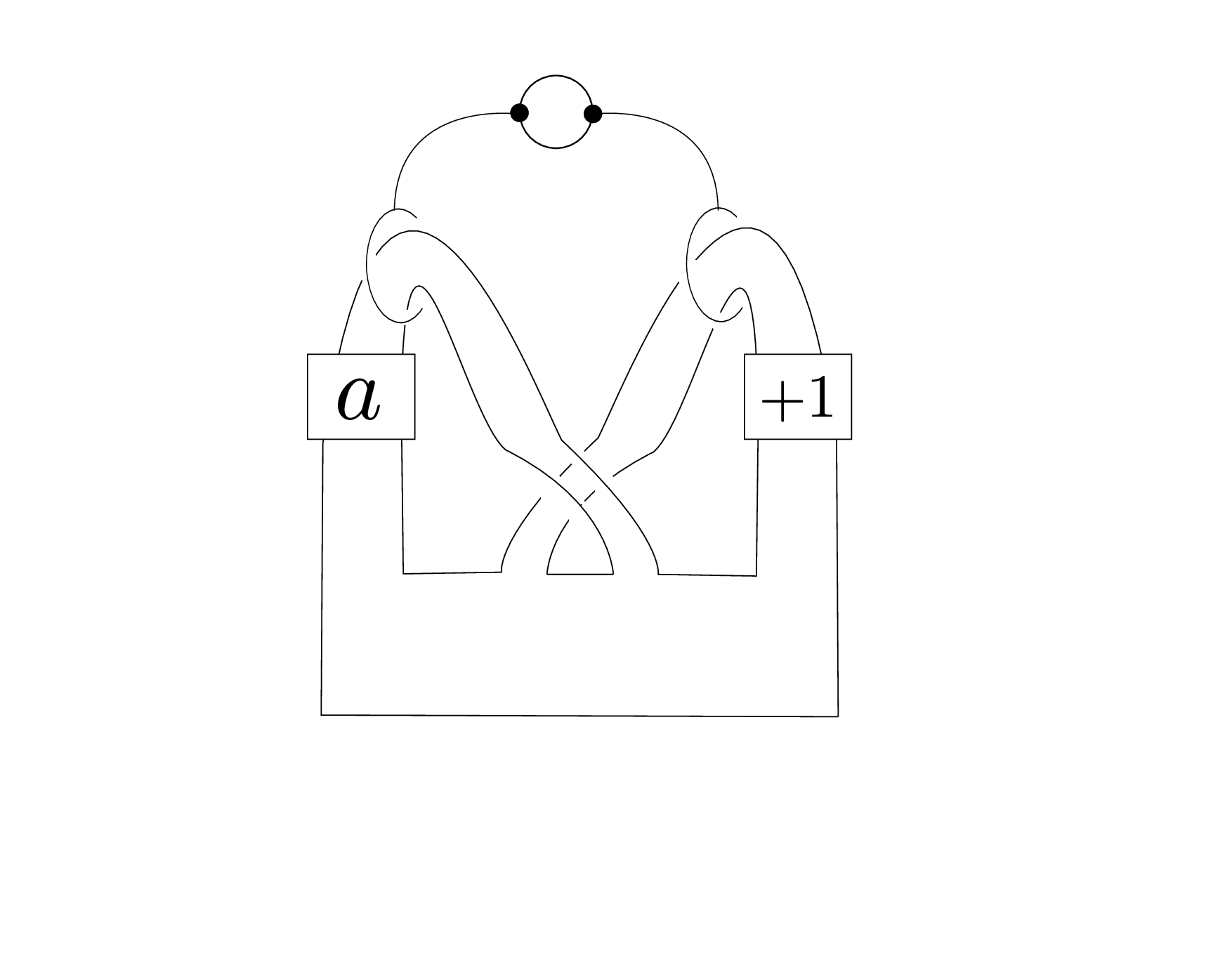}
   \end{array}$}}\quad,\quad K_2={\mbox{$\begin{array}{c}
   \includegraphics[scale=0.25]{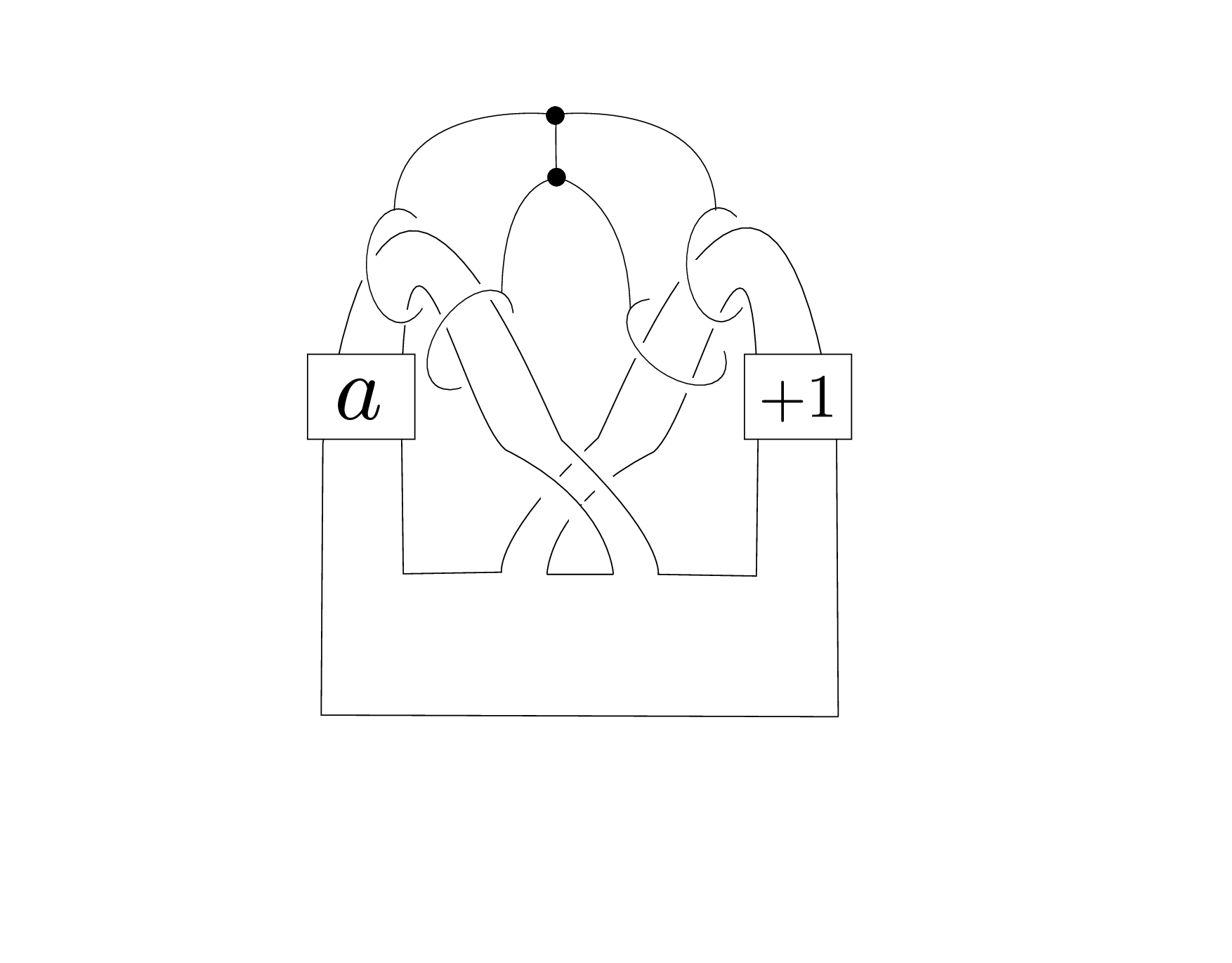}
   \end{array}$}},
\end{align*}
where ``$a$-box'' (resp. ``$+1$-box'') represents $a$-full twist (resp. $+1$-full twist), and the connected graphs in the complement of knots are claspers. Note that the Alexander polynomial of $K_i$ is equal to $\Delta(t)$. We can calculate $Z^{(2)}(K_i)$ by using Proposition 3.9, Lemma 3.11, and Lemma 3.12 of \cite{Oh4}, as follows
\begin{align*}
Z^{(2)}(K_0)&=\left(-\df{1}{16}a(a+1)\right)\theta_1+\left(\df{1}{32}a(a+1)\right)\theta_2,\\
Z^{(2)}(K_1)&=\left(-\df{1}{16}a(a+1)+\df{1}{2}\right)\theta_1+\left(\df{1}{32}a(a+1)\right)\theta_2,\\
Z^{(2)}(K_2)&=\left(-\df{1}{16}a(a+1)\right)\theta_1+\left(\df{1}{32}a(a+1)-\df{3}{4}\right)\theta_2.
\end{align*}
When $a\neq 0,-1$, we have
\begin{align*}
\begin{vmatrix}
-\df{1}{16}a(a+1)&\df{1}{32}a(a+1)\\
-\df{1}{16}a(a+1)+\df{1}{2}&\df{1}{32}a(a+1)
\end{vmatrix}=-\df{1}{64}a(a+1)\neq 0.
\end{align*}
Thus, $Z^{(2)}(K_0)$ and $Z^{(2)}(K_1)$ are linearly independent. When $a=0,-1$, we have
\begin{align*}
\begin{vmatrix}
-\df{1}{16}a(a+1)+\df{1}{2}&\df{1}{32}a(a+1)\\
-\df{1}{16}a(a+1)&\df{1}{32}a(a+1)-\df{3}{4}
\end{vmatrix}=\df{1}{16}(a-2)(a+3)\neq 0.
\end{align*}
Thus, $Z^{(2)}(K_1)$ and $Z^{(2)}(K_2)$ are linearly independent, and hence, it holds that $d(2,1,\Delta(t))\geq 2$. Therefore, we obtain $\mathcal{V}(2,1,\Delta(t))=\text{span}_{\mathbb{Q}}(\theta_1,\theta_2)$ and $d(2,1,\Delta(t))=2$.
\end{proof}

\begin{proof}[Proof of Corollary \ref{2loop}]
By Proof of Theorem \ref{t21d}, we have $Z^{(2)}(K)=p\theta_1+q\theta_2$, and
\begin{align*}
Z^{(2)}(K)=\left(p-2q\right){\mbox{$\begin{array}{c}
   \includegraphics[scale=0.2]{2looph1.ps}
   \end{array}$}}&+\left(\left(-2a+\df{1}{6}\right)p+\left(\df{28}{3}a-\df{5}{3}\right)q\right){\mbox{$\begin{array}{c}
   \includegraphics[scale=0.2]{2looph2.ps}
   \end{array}$}}\\*
&+(\text{terms with higher degree}).
\end{align*}
Hence, we have
\begin{align*}
\begin{pmatrix}
1&-2\\
-2a+\df{1}{6}&\df{28}{3}a-\df{5}{3}\\
\end{pmatrix}
\begin{pmatrix}
p\\
q
\end{pmatrix}=\begin{pmatrix}
b_1\\
b_2
\end{pmatrix},\quad\text{thus}\quad
\begin{pmatrix}
p\\
q
\end{pmatrix}=\df{3}{16a-4}
\begin{pmatrix}
\df{28}{3}a-\df{5}{3}&2\\
2a-\df{1}{6}&1\\
\end{pmatrix}
\begin{pmatrix}
b_1\\
b_2
\end{pmatrix}.
\end{align*}
Therefore, we obtain
\begin{align*}
Z^{(2)}(K)=&\left(\df{28a-5}{16a-4}b_1+\df{6}{16a-4}b_2\right){\mbox{$\begin{array}{c}
   \includegraphics[scale=0.25]{2loop1d.ps}
   \end{array}$}}\\*
&+\left(\df{12a-1}{32a-8}b_1+\df{3}{16a-4}b_2\right)\left({\mbox{$\begin{array}{c}
   \includegraphics[scale=0.25]{2loop2d.ps}
   \end{array}$}}+\left(\df{4}{3}a-\df{1}{3}\right){\mbox{$\begin{array}{c}
   \includegraphics[scale=0.25]{2loop3d.ps}
   \end{array}$}}\right)\\
=&\df{(28a-5)b_1+6b_2}{16a-4}{\mbox{$\begin{array}{c}
   \includegraphics[scale=0.25]{2loop1d.ps}
   \end{array}$}}+\df{(12a-1)b_1+6b_2}{32a-8}{\mbox{$\begin{array}{c}
   \includegraphics[scale=0.25]{2loop2d.ps}
   \end{array}$}}\\*
&+\df{(12a-1)b_1+6b_2}{24}{\mbox{$\begin{array}{c}
   \includegraphics[scale=0.25]{2loop3d.ps}
   \end{array}$}}.
\end{align*}
\end{proof}

\begin{proof}[Proof of Theorem \ref{thm6}]
It is known \cite[Theorem 4.7]{Oh4} that for any $K\in\mathcal{K}_{\leq g}^{\Delta(t)}$, the maximal degree of $t_1$ of the 2-loop polynomial of $K$ is less than or equal to $2g$, where the 2-loop polynomial is a polynomial presenting $Z^{(2)}$, see \cite{Oh4}. Thus, it follows that $Z^{(2)}(K)$ belongs to the subspace generated by the set $\{\Theta_m^n\mid m,n\in\mathbb{Z}, n\geq 1, 0\leq 2m\leq n\leq 2g\}$, where 
\begin{align*}
\Theta_m^n={\mbox{$\begin{array}{c}
   \includegraphics[scale=0.25]{2looptd.ps}
   \end{array}$}}-{\mbox{$\begin{array}{c}
   \includegraphics[scale=0.25]{2loopdd.ps}
   \end{array}$}},\quad\text{and  $t=e^h$}.
\end{align*}
Note that $\Theta_m^n$ correspond to the polynomial $T_{n,m}$ defined in Section 1 in \cite{Oh4}. Thus, since
\begin{align*}
\#\{\Theta_m^n\mid m,n\in\mathbb{Z}, n\geq 1, 0\leq 2m\leq n\leq 2g\}=\sum_{m=0}^{g}(2g-2m+1)-1=g^2+2g,
\end{align*}
we have $d(2,g,\Delta(t))\leq g^2+2g$.
\end{proof}

\section*{Appendix}

\appendix

\section{Proof of the fact that $\mathcal{V}(n,\Delta(t))$ is infinite dimensional}
\label{A}

In this section, we prove the fact that $\mathcal{V}(n,\Delta(t))$ is infinite dimensional.

\begin{prop}
\label{pa1}
For any integer $n\geq 2$ and $\Delta(t)\in\mathcal{Z}$, the subspace $\mathcal{V}(n,\Delta(t))$ is infinite dimensional.
\end{prop}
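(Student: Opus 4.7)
The plan is to exhibit, for each $n\geq 2$ and each $\Delta(t)\in\mathcal{Z}$, an infinite family of knots in $\mathcal{K}^{\Delta(t)}$ whose $n$-loop Kontsevich invariants are linearly independent in $\widetilde{\mathcal{B}_{\text{conn}}^{(n)}}$. The strategy is to start from a single base knot $K_0\in\mathcal{K}^{\Delta(t)}$ and perturb it by clasper surgeries that preserve the Alexander polynomial yet inject nontrivial contributions to $Z^{(n)}$ at arbitrarily high degrees. Existence of $K_0$ is classical: every $\Delta\in\mathcal{Z}$ is realized as the Alexander polynomial of some knot via a Seifert surface with an appropriate Seifert form. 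A small simplification is possible here: since $Z$ is multiplicative under connected sum and $\chi^{-1}$ sends this to the commutative disjoint-union product in $\mathcal{B}$, one has $Z^{(n)}(K_0\#K)=Z^{(n)}(K_0)+Z^{(n)}(K)$, so it suffices to treat the case $\Delta(t)=1$ and then tensor with a fixed base knot of Alexander polynomial $\Delta(t)$.

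For each sufficiently large $d$, I would construct a connected graph-clasper $C_d\subset S^3\setminus K_0$ whose underlying core graph has first Betti number $n$, whose combinatorial complexity yields degree $d$, and whose leaves are null-homologous in $H_1(S^3\setminus K_0)$. The null-homology of the leaves is the crucial condition: it guarantees that the Seifert form, and hence the Alexander polynomial, is unchanged by the surgery $K_0\leadsto K_d:=(K_0)_{C_d}$. Thus $K_d\in\mathcal{K}^{\Delta(t)}$ for every $d$.

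By iterating the clasper-surgery formula recalled in Section~\ref{sec4}, the difference $Z(K_d)-Z(K_0)$ equals a specific Jacobi diagram $D_{C_d}$ read off from $C_d$, modulo terms of strictly higher loop number or strictly higher degree. With null-homologous leaves and an $n$-loop core, $D_{C_d}$ lies in the $n$-loop part, has degree exactly $d$, and can be arranged to be nonzero in $\mathcal{B}_{\text{conn}}^{(n)}$ by a judicious choice of the core graph. Consequently the elements $\delta_d:=Z^{(n)}(K_d)-Z^{(n)}(K_0)\in\widetilde{\mathcal{B}_{\text{conn}}^{(n)}}$ have nonzero leading term at degree exactly $d$, so projecting to successive degree quotients produces a triangular system; linear independence of $\{Z^{(n)}(K_d)\}_d$ follows, and $\mathcal{V}(n,\Delta(t))$ is infinite dimensional.

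The main technical obstacle lies in arranging, at every degree $d$, both the null-homology of the leaves and the nonvanishing of $D_{C_d}$ modulo the AS and IHX relations. For $n=2$ the construction can be made concrete using dumbbell-shaped claspers (two $Y$-graphs joined by an edge) whose leaves are commutator curves on a Seifert surface of $K_0$, with extra trivalent vertices inserted along the core to raise the degree; for general $n$ one uses analogous $n$-loop claspers built from a fixed $n$-loop skeleton to which further trivalent vertices and null-homologous leaves are attached, and one verifies nonvanishing of the leading Jacobi diagram by pairing against a concrete weight system such as the $\mathfrak{sl}_2$ weight system applied after closing up the legs. This last verification, carried out uniformly in $d$, is the step that requires the most care.
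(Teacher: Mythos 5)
Your proposal rests on the same mechanism as the paper's proof: modify a base knot $K_0\in\mathcal{K}^{\Delta(t)}$ by surgery along connected $n$-loop graph claspers of arbitrarily large degree, use the clasper surgery formula to see that $Z^{(n)}$ changes by the associated Jacobi diagram plus higher-order terms while the Alexander polynomial (the $1$-loop part) is untouched, and certify that this diagram is nonzero by evaluating the $\mathfrak{sl}_2$ weight system. What differs is the logical packaging. You argue directly: one clasper for each degree $d$, and linear independence of $\{Z^{(n)}(K_d)\}_d$ extracted from the triangularity of the leading terms in the degree filtration. The paper argues by contradiction: finite-dimensionality would force $Z^{(n)}|_{\mathcal{K}^{\Delta(t)}}$ to be determined by finitely many Vassiliev invariants of some maximal degree $d$, and a single $n$-loop clasper with $2d$ leaves then leaves all of those invariants unchanged while altering $Z^{(n)}$, because the $\mathfrak{sl}_2$ reduction of the resulting diagram is $(4h)^{n-1}\cdot 2(2C)^d h^{2d}\neq 0$. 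Both routes need exactly the same uniform-in-$d$ nonvanishing, so your version costs nothing extra and returns slightly more (an explicit infinite linearly independent family rather than a bare contradiction); it is a legitimate and arguably more informative variant.

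Two details should be repaired. First, the connected-sum reduction to $\Delta(t)=1$ is both unnecessary and incorrectly justified: the PBW map $\chi$ is not an algebra map from $(\mathcal{B},\sqcup)$ to $(\mathcal{A}(\downarrow),\cdot)$, so multiplicativity of $Z$ under connected sum does not immediately yield additivity of $\log\chi^{-1}Z$ or of $Z^{(n)}$; since your main construction never uses this reduction, simply drop it. Second, your claim that null-homologous leaves preserve the Seifert form is too strong (at best they preserve the S-equivalence class, which does suffice for $\Delta$), and it creates the extra burden of checking that capping off null leaves still leaves a nonzero $n$-loop diagram of the intended degree. The paper sidesteps this by taking the $2d$ leaves to be meridians of the knot, so the associated diagram keeps $2d$ legs labelled by $h$; invariance of the Alexander polynomial then follows not from the Seifert form but from the fact that a connected clasper of loop degree $n\geq 2$ contributes only to the $(\geq n)$-loop part of the invariant. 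Adopting that choice of leaves would make your "most delicate step" exactly the computation the paper quotes and close the remaining gap.
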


\begin{proof}
Assume that $\mathcal{V}(n,\Delta(t))$ is finite dimensional. In a similar way of Proof of Corollary \ref{corvv}, we can show that there exist finitely many Vassiliev invariants $v_1,\cdots,v_m$ such that for any $K_1,K_2\in\mathcal{K}^{\Delta(t)}$,
\begin{align}
\label{pae}
v_k(K_1)=v_k(K_2)\quad(\text{for all $k=1,\cdots,m$})\iff Z^{(n)}(K_1)=Z^{(n)}(K_2).
\end{align}
We put $d=\text{max}\{\text{degree of }v_k\mid k=1,\cdots,m\}$. Since it is known that $\mathcal{K}^{\Delta(t)}$ is not $\emptyset$, we take $K\in\mathcal{K}^{\Delta(t)}$. Then, we consider the following knot $K'$,
\begin{align*}
K'={\mbox{$\begin{array}{c}
   \includegraphics[scale=0.25]{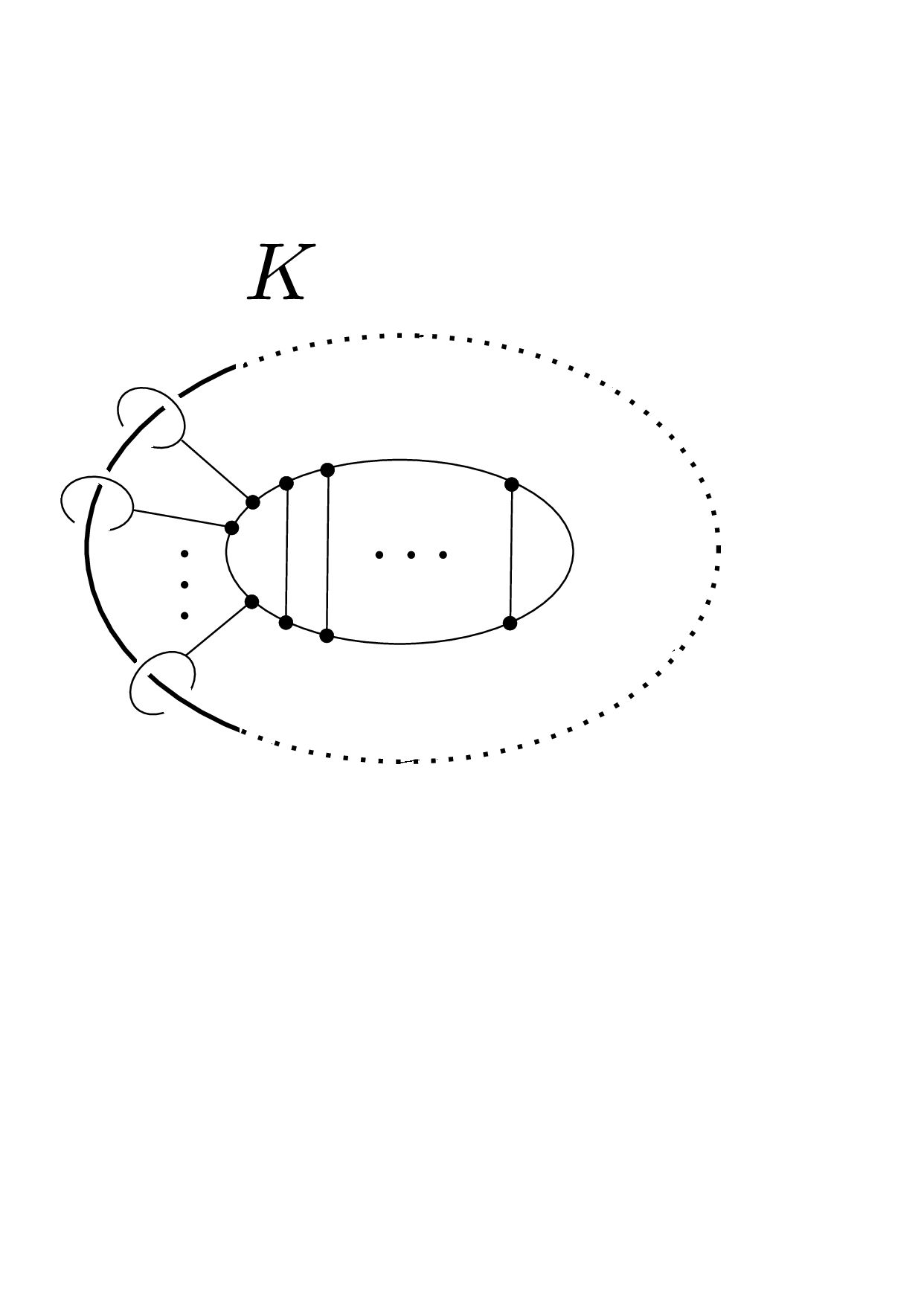}
   \end{array}$}},
\end{align*}
where the clasper in the figure is a $n$-loop clasper with $2d$ leaves. Then, by a clasper surgery formula, we have
\begin{align}
\label{paz}
Z^{(n)}(K')-Z^{(n)}(K)=\pm{\mbox{$\begin{array}{c}
   \includegraphics[scale=0.25]{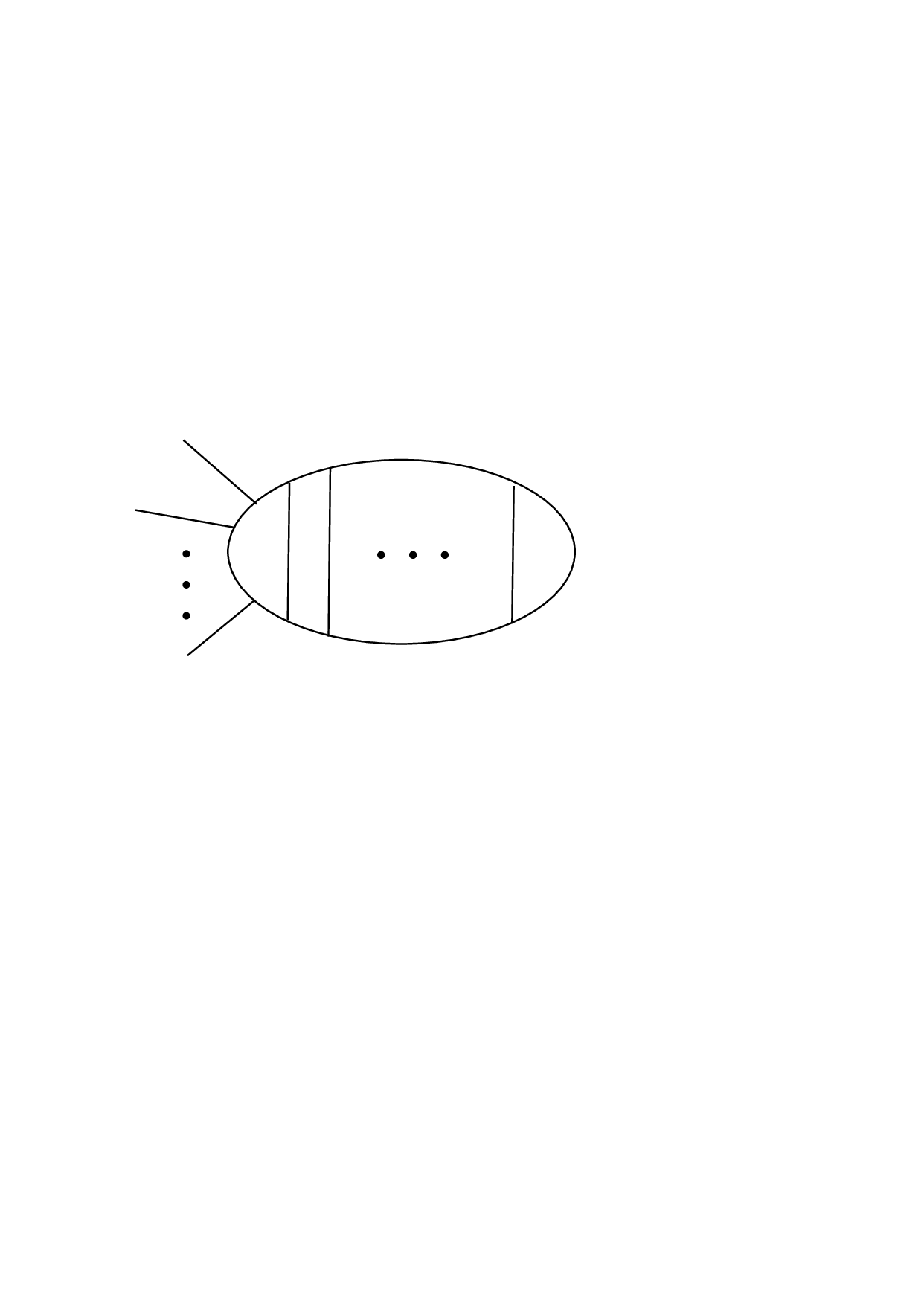}
   \end{array}$}}+(\text{terms with higher degrees}).
\end{align}
Since the first term of the right hand side has $2d$ univalent vertices, it is an open Jacobi diagram with more than degree $d$. Hence, we have $v_k(K)=v_k(K')$ for all $k=1,\cdots,m$. Thus, (\ref{pae}) implies that $Z^{(n)}(K')=Z^{(n)}(K)$. However, we can show that the $\mathfrak{sl}_2$ reduction of ${\mbox{$\begin{array}{c}
   \includegraphics[scale=0.25]{6Kd.ps}
   \end{array}$}}$ is equivalent to $(4h)^{n-1}\cdot2(2C)^dh^{2d}\neq 0$, where $C$ denotes the Casimir element of $\mathfrak{sl}_2$, see \cite{Oh3}. Thus, the diagram ${\mbox{$\begin{array}{c}
   \includegraphics[scale=0.25]{6Kd.ps}
   \end{array}$}}$ is not zero, and hence, (\ref{paz}) implies $Z^{(n)}(K')\neq Z^{(n)}(K)$. This is contradiction.
\end{proof}

\section{Proof of the fact that $\mathcal{K}_{\leq g}^{\Delta(t)}$ is an infinite set unless it is $\emptyset$}
\label{B}

In this section, we prove that the set $\mathcal{K}_{\leq g}^{\Delta(t)}$ is an infinite set unless it is $\emptyset$.

\begin{prop}
For any $\Delta(t)\in\mathcal{Z}$, there exists a integer $g_0>0$ such that 
\begin{align*}
\#\mathcal{K}_{\leq g}^{\Delta(t)}=\left\{
\begin{array}{ll}
0&(0<g<g_0)\\
\infty&(g\geq g_0)
\end{array}
\right.
\end{align*}
\end{prop}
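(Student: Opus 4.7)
The plan has two stages: first, identify $g_0$ as the minimum genus of a knot with Alexander polynomial $\Delta(t)$; then exhibit infinitely many pairwise non-isotopic knots already in $\mathcal{K}_{\leq g_0}^{\Delta(t)}$, which gives the statement for every $g\geq g_0$.

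For the first stage, I invoke the classical realization result: for any polynomial $p(t)\in\mathcal{Z}$ there exists a knot whose Alexander polynomial equals $p(t)$, produced explicitly from a Seifert matrix of the appropriate form. Thus $\mathcal{K}^{\Delta(t)}\neq\emptyset$, and I set $g_0$ to be the smallest positive integer with $\mathcal{K}_{\leq g_0}^{\Delta(t)}\neq\emptyset$. This $g_0$ is well-defined (some knot realizes $\Delta(t)$ and has finite genus) and positive, and $\mathcal{K}_{\leq g}^{\Delta(t)}=\emptyset$ holds for $0<g<g_0$ by definition.

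For the second stage, fix $K_0\in\mathcal{K}_{\leq g_0}^{\Delta(t)}$ with a minimal Seifert surface $F$ of genus $g_0\geq 1$, so $F$ has at least two bands. For each non-trivial knot $J$ I form the local knotting $K_J$: inside a small ball $B$ meeting $F$ in a sub-band of a fixed band $b$, replace the unknotted arc $K_0\cap B$ by a framing-balanced knotted arc of type $J$ (adjusting by a band twist if necessary, so that the surface framing of the core of $b$ is preserved). The resulting Seifert surface $F_J$ has the same abstract genus $g_0$ as $F$, so $g(K_J)\leq g_0$. Moreover the Seifert matrix of $F_J$ equals that of $F$: the cores of the bands other than $b$ are disjoint from $B$, so their pairwise linking numbers and their linking numbers with the modified core of $b$ are unchanged, and the self-linking of the modified core remains the surface framing by the balanced choice. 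Hence $\Delta_{K_J}=\Delta(t)$ and $K_J\in\mathcal{K}_{\leq g_0}^{\Delta(t)}\subseteq \mathcal{K}_{\leq g}^{\Delta(t)}$ for every $g\geq g_0$.

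To conclude, I pick pairwise distinct non-trivial prime knots $\{J_n\}_{n\geq 1}$ (for instance the torus knots $T(2,2n+1)$) and argue that the $K_{J_n}$ are pairwise non-isotopic; this is the main obstacle. The cleanest approach is via the JSJ decomposition of $S^3\setminus K_{J_n}$: the boundary of a suitable regular neighborhood of the local arc representing $J_n$ becomes an essential torus in the complement, and the JSJ piece on the interior side is homeomorphic to the exterior of $J_n$, so the companion $J_n$ is recovered as a topological invariant of $K_{J_n}$ and distinct $J_n$ force distinct $K_{J_n}$. Alternatively, the $K_{J_n}$ can be distinguished by a sufficiently high-degree Vassiliev invariant depending non-trivially on the local companion, via a clasper-surgery argument analogous to the one in Proposition \ref{pa1}. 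The realization of $\Delta(t)$ and the invariance of the Seifert matrix under balanced local knotting are classical, so this JSJ (or Vassiliev) detection step is the only genuinely non-trivial ingredient.
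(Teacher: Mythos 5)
Your route is genuinely different from the paper's. The paper takes any $K\in\mathcal{K}_{\leq g}^{\Delta(t)}$ with a representing tangle $T$, inserts an $n$-loop clasper into the standard genus-$g$ surface picture (so that the genus bound and the equivariant linking matrix, hence $\Delta(t)$, are preserved), and then uses the rational Aarhus integral to show that the resulting knots $K_n$ satisfy $Z^{(k)}(K_n)=Z^{(k)}(K)$ for $1\leq k\leq n$ but $Z^{(n+1)}(K_n)\neq Z^{(n+1)}(K)$, the nonvanishing being checked by an $\mathfrak{sl}_2$ reduction as in Proposition \ref{pa1}; distinct $n$ then give distinct knots. You instead tie local knots $J_n$ into a band of a minimal-genus Seifert surface (a winding-number-zero satellite operation), which preserves the Seifert matrix and the genus bound, and you distinguish the results by the JSJ decomposition. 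Your approach is more elementary in that it avoids the Kontsevich-invariant machinery entirely, at the cost of importing 3-manifold topology; the paper's approach additionally produces knots that are indistinguishable up to a prescribed loop degree, which matches the tools used elsewhere in the paper.

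Two points need attention before your argument is complete. First, the claim that the companion torus $\partial V$, with $V=S^3\setminus N(c)$ and $c$ the meridian of the chosen band $b$, is essential in $S^3\setminus K_{J_n}$ is asserted rather than proved, and it is exactly here that the construction could a priori collapse: if $c$ bounded a disk in $S^3\setminus K_0$, then $K_0$ would lie in a ball in $V$ and every $K_{J_n}$ would equal $K_0$. This is where minimality of $F$ must actually be used: a minimal-genus Seifert surface is incompressible, so a disk bounded by $c$ in the complement of $K_0$ could be isotoped off $F$, contradicting $\mathrm{lk}(c,\gamma)=\pm1$ for a cycle $\gamma\subset F$ running once through $b$. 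With that in place, and with the $J_n$ chosen to be distinct torus knots (atoroidal, pairwise non-homeomorphic exteriors), uniqueness of the JSJ decomposition does separate the $K_{J_n}$ up to at most finitely many exceptions. Second, when $\Delta(t)=1$ your $g_0$ equals $1$ while the obvious representative is the unknot, whose minimal Seifert surface has no bands; you must instead choose $K_0$ of genus exactly $1$ with trivial Alexander polynomial (e.g.\ an untwisted double), and in general a $K_0$ of genus between $1$ and $g_0$. Both issues are fillable by standard arguments, but as written the detection step is a sketch rather than a proof.
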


\begin{proof}
Since it is known that $\mathcal{K}^{\Delta(t)}$ is not $\emptyset$, there exists a integer $g_0>0$ such that $\#\mathcal{K}_{\leq g}^{\Delta(t)}\neq 0$ for any $g\geq g_0$. For such $g$, we take an element $K\in\mathcal{K}_{\leq g}^{\Delta(t)}$. Then, we consider the following knot $K_n$,
\begin{align*}
K_n={\mbox{$\begin{array}{c}
   \includegraphics[scale=0.2]{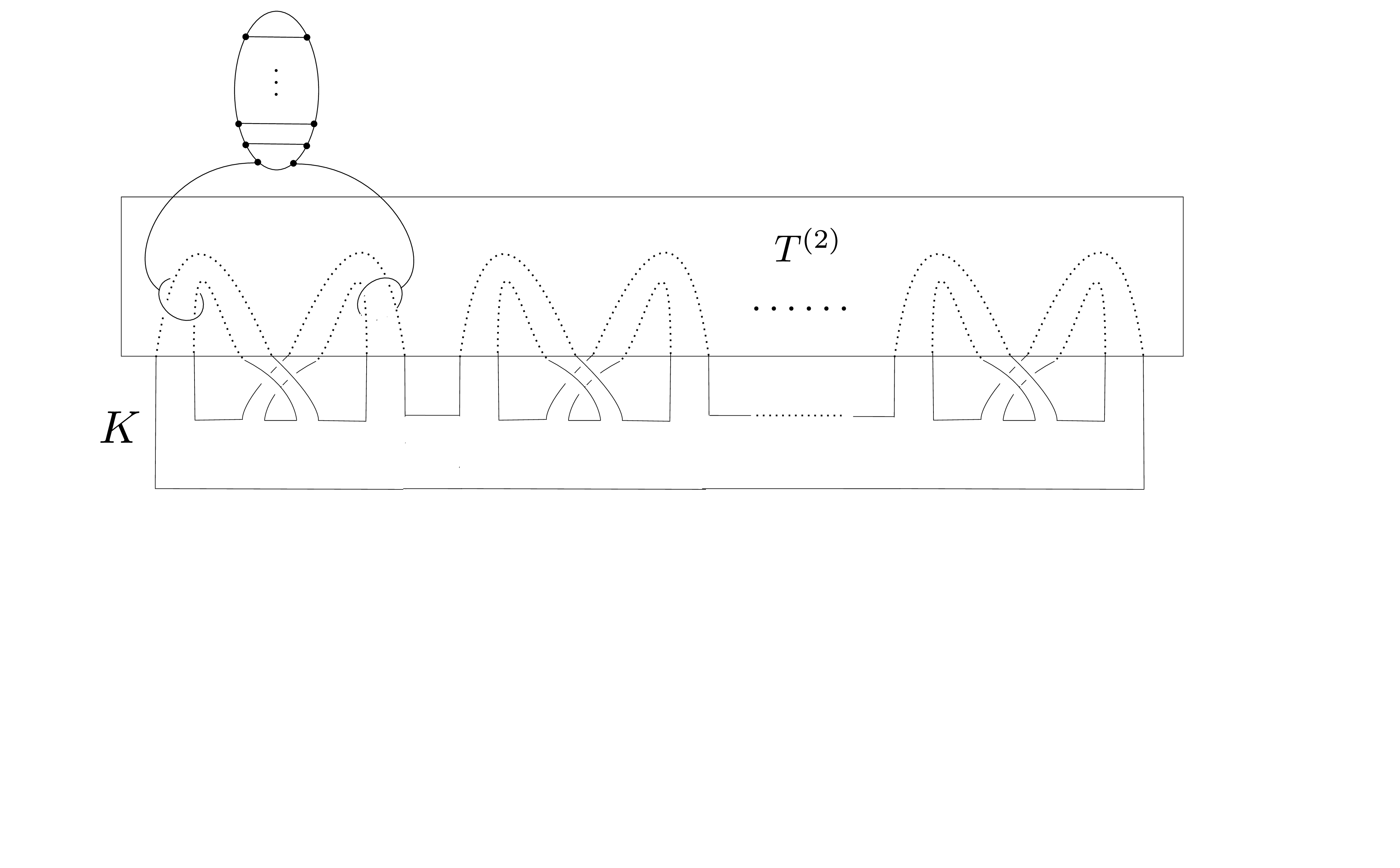}
   \end{array}$}},
\end{align*}
where the clasper in the figure is the $n$-loop clasper and $T$ is a representation tangle of $K$ with $2g$-component. We note that the Alexander polynomial of $K_n$ is also $\Delta(t)$. We obtain the following surgery presentation of $K_n$,
\begin{align*}
K_0\cup L_n={\mbox{$\begin{array}{c}
   \includegraphics[scale=0.2]{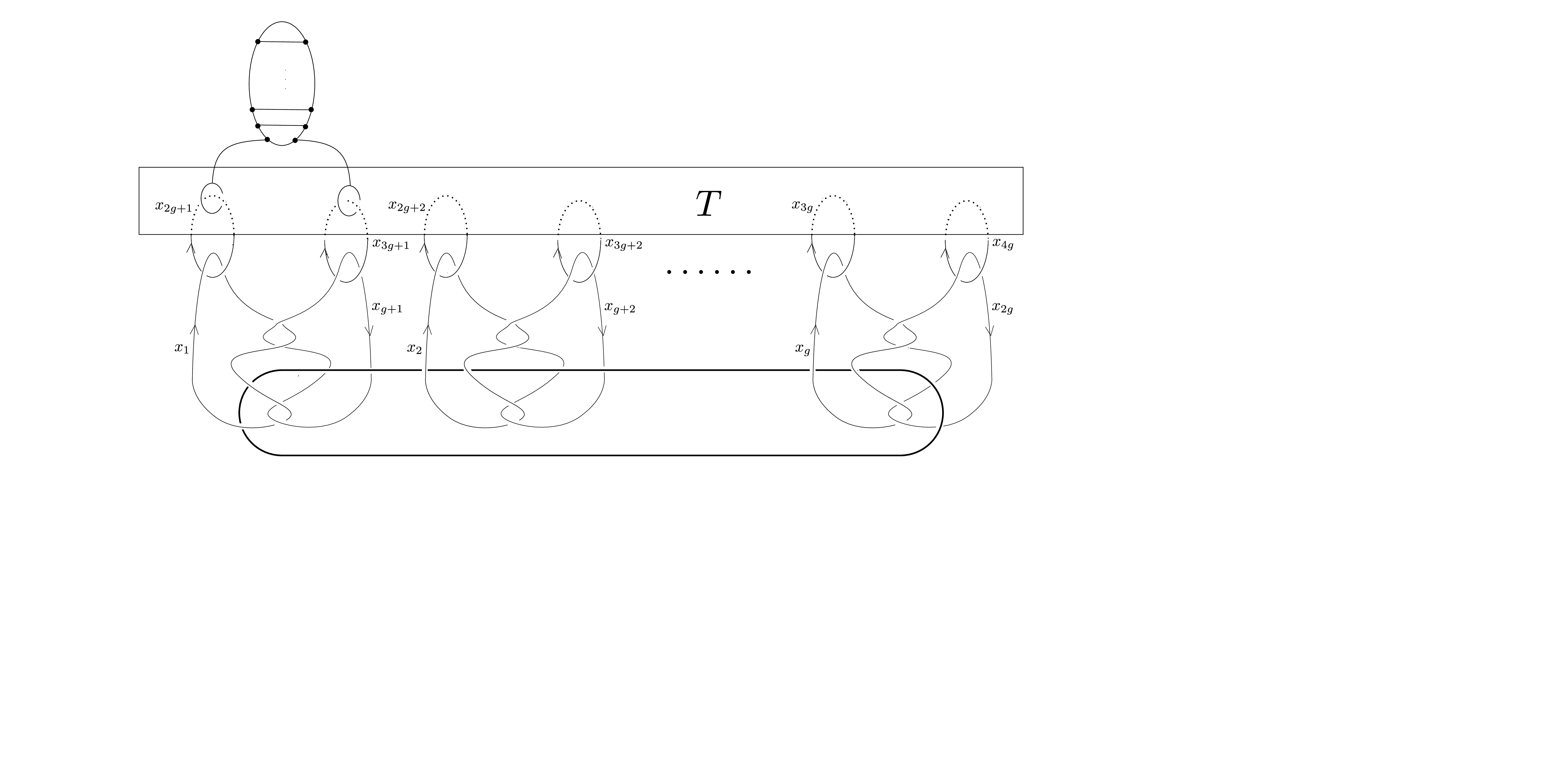}
   \end{array}$}},
\end{align*}
where $K_0$ is depicted by the thick line, $L_n$ is depicted by the thin lines. Then, its equivariant linking matrix is given by
\begin{align*}
\big(l_{ij}(t)\big)=
\begin{pmatrix}
O&(t^{-1}-1)I&I&O\\
(t-1)I&O&O&I\\
I&O&U&W\\
O&I&W^{T}&V\\
\end{pmatrix},
\end{align*}
where $O$ denotes the zero matrix of size $g$, $I$ denotes the unit matrix of size $g$, and 
$\begin{pmatrix}
U&W\\
W^{T}&V\\
\end{pmatrix}$
is the linking matrix of $T$. By a clasper surgery formula, we have
\begin{align*}
\chi^{-1}\check{Z}(K_0\cup L_n)-\chi^{-1}\check{Z}(K_0\cup L_0)=&\exp\Big(\df{1}{2}\sum_{x_i,x_j\in X}\mbox{$\begin{array}{c}
   \includegraphics[scale=0.2]{st.ps}
 \end{array}$}\Big)\cup\Big(\pm{\mbox{$\begin{array}{c}
   \includegraphics[scale=0.25]{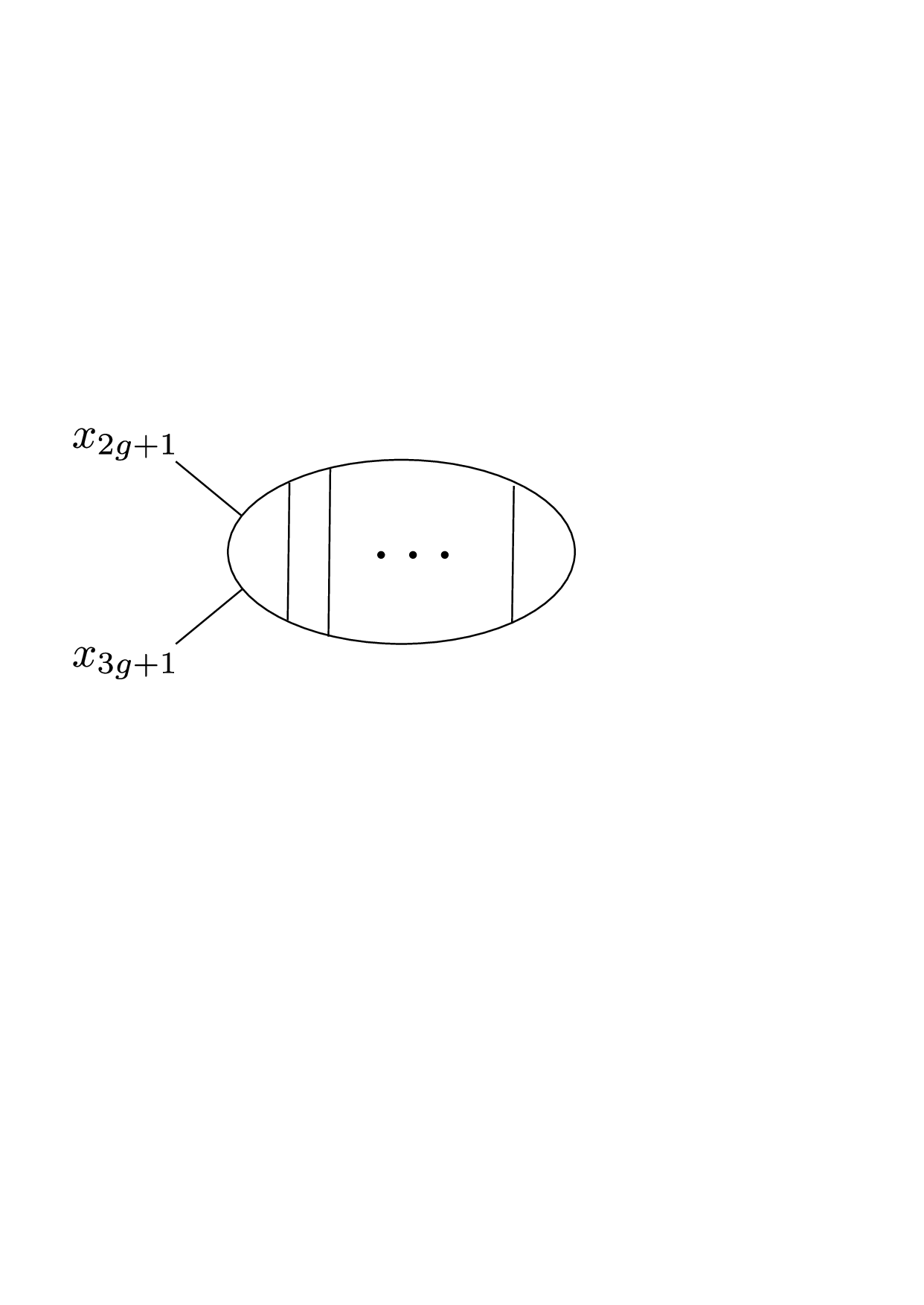}
   \end{array}$}}\\*
&+(\text{terms with more trivalent vertices})\Big),
\end{align*}
where ${\mbox{$\begin{array}{c}
   \includegraphics[scale=0.25]{6Kc.ps}
   \end{array}$}}$ is the $n$-loop Jacobi diagram corresponding to the clasper, and $K_0\cup L_0$ is the link with the clasper removed from $K_0\cup L_n$, noting that this is the surgery presentation of $K$. Thus, we have
\begin{align}
\label{cknk}
\chi^{-1}Z(K_n)-\chi^{-1}Z(K)=&\Big\langle\exp\Big(-\df{1}{2}\sum_{x_i,x_j\in X}\mbox{$\begin{array}{c}
   \includegraphics[scale=0.2]{stt.ps}
 \end{array}$}
\Big),\pm{\mbox{$\begin{array}{c}
   \includegraphics[scale=0.25]{6Kc.ps}
   \end{array}$}}\Big\rangle\nonumber\\*
&+(\text{terms of higher loops}),
\end{align}
where we denote $\big(l^{ij}(t)\big)=\big(l_{ij}(t)\big)^{-1}$. Since it holds that $\displaystyle\sum_{j=1}^{4g}l_{g+1,j}(t)l^{j,2g+1}(t)=0$, we get 
\begin{align}
\label{lgg}
l^{3g+1,2g+1}(t)=-(t-1)l^{1,2g+1}(t).
\end{align} 
However, we have
\begin{align*}
l^{1,2g+1}(t)=\df{1}{\left|\big(l_{ij}(t)\big)\right|}\cdot
\begin{vmatrix}
O'&(t^{-1}-1)I&I&O\\
(t-1)I'&O&O&I\\
I'&O'&U'&W'\\
O'&I&W^{T}&V\\
\end{vmatrix},
\end{align*}
where 
$\begin{vmatrix}
O'&(t^{-1}-1)I&I&O\\
(t-1)I'&O&O&I\\
I'&O'&U'&W'\\
O'&I&W^{T}&V\\
\end{vmatrix}$
is the $(2g+1,1)$-cofactor of $\big(l_{ij}(t)\big)$. Thus, by the straightforward calculation, we obtain
\begin{align*}
l^{1,2g+1}(1)&=\df{1}{\left|\big(l_{ij}(1)\big)\right|}\cdot
\begin{vmatrix}
O'&O&I&O\\
O'&O&O&I\\
I'&O'&U'&W'\\
O'&I&W^{T}&V\\
\end{vmatrix}
=1.
\end{align*}
This implies that
\begin{align*}
l^{1,2g+1}(t)=1+\sum_{\boldsymbol{i}=(i_+,i_-)\in\mathbb{Z}^2\backslash(0,0)}q_{\boldsymbol{i}}(t-1)^{i_+}(t^{-1}-1)^{i_-}, 
\end{align*}
where $\boldsymbol{i}=(i_+,i_-)$ is a multi index, and $q_{\boldsymbol{i}}\in\mathbb{Z}$. Thus, by (\ref{lgg}), we have
\begin{align*}
l^{3g+1,2g+1}(t)&=-(t-1)-\sum_{\boldsymbol{i}=(i_+,i_-)\in\mathbb{Z}^2\backslash(0,0)}q_{\boldsymbol{i}}(t-1)^{i_++1}(t^{-1}-1)^{i_-},
\end{align*}
Hence,
\begin{align*}
l^{3g+1,2g+1}(e^h)&=-h+rh^2+o(h^2),
\end{align*}
where $r\in1/2+\mathbb{Z}:=\{1/2+n\mid n\in\mathbb{Z}\}$, so in particular, $r\neq 0$. In the same way, we get
\begin{align*}
l^{2g+1,3g+1}(e^h)&=h+rh^2+o(h^2).
\end{align*}
Therefore, by (\ref{cknk}), we obtain
\begin{align*}
Z^{(k)}(K_n)&=Z^{(k)}(K)\quad,\quad 1\leq k\leq n,\\
Z^{(n+1)}(K_n)&=Z^{(n+1)}(K)\pm r{\mbox{$\begin{array}{c}
   \includegraphics[scale=0.25]{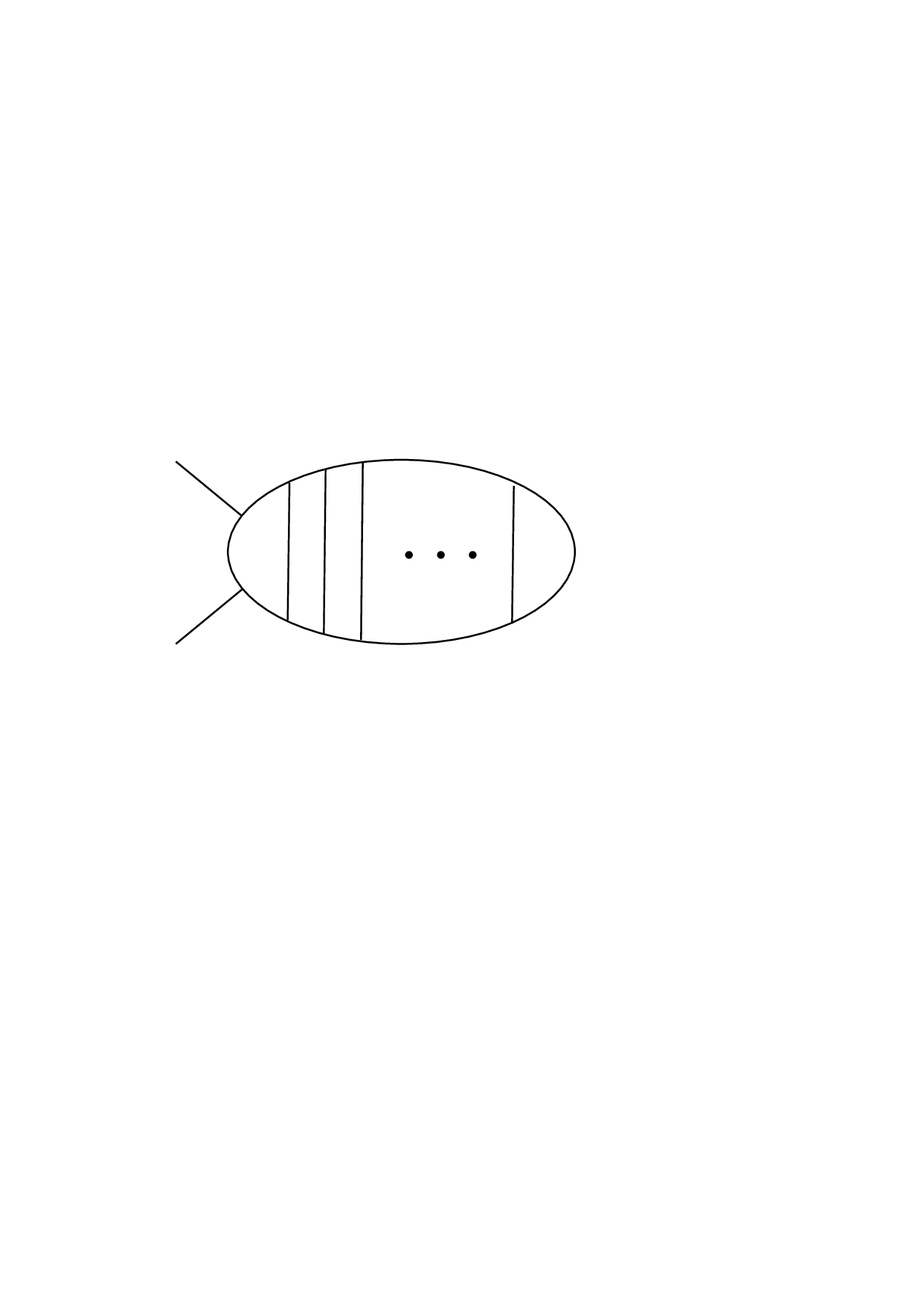}
   \end{array}$}}.
\end{align*}
By considering the $\mathfrak{sl}_2$ reduction as in the Proof of Proposition \ref{pa1}, we can show that $\pm r{\mbox{$\begin{array}{c}
   \includegraphics[scale=0.25]{6Kc2.ps}
   \end{array}$}}$ is not zero. Therefore, if $n<n'$ we have $Z^{(n+1)}(K_n)\neq Z^{(n+1)}(K_{n'})$, and hence, $K_n\neq K_{n'}$. This conclude that $\#\mathcal{K}_{\leq g}^{\Delta(t)}=\infty$.
\end{proof}

Research Institute for Mathematical Science, Kyoto University, Sakyo-ku, Kyoto, 606-8502, Japan\par

E-mail address: yamakou@kurims.kyoto-u.ac.jp

\end{document}